\documentclass[a4paper,12pt]{amsart}
\usepackage{amsmath}
\usepackage{amsthm}
\usepackage{amssymb}
\usepackage{amscd}
\usepackage[all]{xy}

\textheight 230mm
\textwidth 160mm
\oddsidemargin 0mm
\evensidemargin 0mm
\topmargin 0mm
\newtheorem{thm}{Theorem}%[section]

\newtheorem{prop}[thm]{Proposition}%[section]
\newtheorem{lemma}{Lemma}[section]
%[section]
\newtheorem{ex}{Example}%[section]
\newtheorem{rmk}[]{Remark}[section]

\theoremstyle{remark}
\newtheorem{question}{Question}%[section]

\newcommand{\proj}{\mathop{\rm Proj}\nolimits}
\newcommand{\im}{\mathop{\rm Im}\nolimits}

\newcommand{\calhom}{\mathop{{\mathcal Hom}}\nolimits}
\DeclareMathOperator{\Hom}{Hom}
\DeclareMathOperator{\Tor}{Tor}
\DeclareMathOperator{\Ext}{Ext}
\newcommand{\calext}{\mathop{{\mathcal Ext}}\nolimits}
\DeclareMathOperator{\RHom}{RHom}

\DeclareMathOperator{\lotimes}{\otimes^{L}}
\newcommand{\caltor}{\mathop{{\mathcal T\!or}}\nolimits}
\DeclareMathOperator{\Ker}{Ker}
\DeclareMathOperator{\Coker}{Coker}

\newcommand{\coker}{\mathop{\rm coker}\nolimits}

\DeclareMathOperator{\End}{End}
\DeclareMathOperator{\length}{length}

\DeclareMathOperator{\module}{mod}

\newcommand{\tors}{\mathop{\rm tors}\nolimits}

\makeatletter

\@addtoreset{equation}{section}
\makeatother

\title[
Nef vector bundles on a projective space
with $c_1=3$
]{
Nef vector bundles on a projective space
with first Chern class three
}

\thanks{
This work was partially supported by 
JSPS KAKENHI (C) Grant Number 15K04810.
%Grants or other notes
%about the article that should go on the front page should be
%placed here. General acknowledgments should be placed at the end of the article.
}

\author{Masahiro Ohno
%\\
%        Graduate School of Informatics and Engineering\\
%        The University of Electro-Communications\\
%        1-5-1, Chofugaoka Chofu-shi\\
%        Tokyo, 182-8585 Japan\\
%        e-mail address: masahiro-ohno@uec.ac.jp\\
}

\address{Graduate School of Informatics and Engineering,
The University of Electro-Communications,
%1-5-1, Chofugaoka 
Chofu-shi,
Tokyo, 182-8585 Japan
}

\email{masahiro-ohno@uec.ac.jp}

\subjclass[2010]{Primary 
14F05;
Secondary 
14J60}

\keywords{nef vector bundles, Fano bundles,
spectral sequences}

%\date{\today}
%\date{\empty}
%\pagestyle{plain}

%\pagestyle{myheadings}
%\markboth{M. Ohno}{
%Nef bundle on a projective space with $c_1=3$ and $c_2<8$
%}

%%%%%%    TEXT START    %%%%%%
\begin{document}
\begin{abstract}
We classify nef vector bundles on a projective space
with first Chern class three
over an algebraically closed field of characteristic zero;
we see, in particular, that
these nef vector bundles are globally generated
if the second Chern class is less than eight,
and that
there exist
nef but non-globally generated vector bundles
with 
second Chern class 
eight and nine 
on a projective plane.
\keywords{nef vector bundles \and Fano bundles \and spectral sequences}
% \PACS{PACS code1 \and PACS code2 \and more}
% \subclass{MSC code1 \and MSC code2 \and more}
\end{abstract}

\maketitle

\section{Introduction}\label{Introduction}
Let $\mathcal{E}$ 
be a nef vector bundle of rank $r$ on a projective space $\mathbb{P}^n$
over an algebraically closed field $K$ of characteristic zero.
Let $c_1$ be the first Chern class of $\mathcal{E}$.
Then $c_1$ is non-negative
since $\mathcal{E}$ is nef.
In \cite[Theorem 1]{pswnef},
Peternell-Szurek-Wi\'{s}niewski
classified such $\mathcal{E}$'s in case $c_1\leq 2$,
based on 
the study~\cite{swNagoya}
of Szurek-Wi\'{s}niewski.
If $c_1\leq 2$ and $n\geq 2$, then $\mathbb{P}(\mathcal{E})$ is a Fano manifold,
and their proof is based on analysis of contraction morphisms of extremal rays.
In \cite[\S 6]{resolution}, a different proof of the classification in case $c_1\leq 2$
is given, based on analysis of some twist $\mathcal{E}(d)$ of $\mathcal{E}$ with 
the 
full strong exceptional sequence 
$\mathcal{O},\mathcal{O}(1),\dots,\mathcal{O}(n)$ 
of line bundles.

In this paper we continue our approach to 
classify
nef vector bundles in the next case $c_1=3$.
Note here that if $c_1=3$ then the anti-canonical bundle of $\mathbb{P}(\mathcal{E})$ is nef if $n\geq 2$
and ample if $n\geq 3$.
Moreover if $c_1=3$ then $0\leq c_2\leq c_1^2=9$,
where $c_2$ denotes the second Chern class of $\mathcal{E}$.
The main result of this paper is as follows.

\begin{thm}\label{c_1=3c_2<8}
Let $\mathcal{E}$ be a nef vector bundle of rank $r$
on a projective space $\mathbb{P}^n$
with $c_1=3$.
Then $c_2$ and 
$\mathcal{E}$
satisfy
one of the following:
\begin{enumerate}
\item[$(1)$] $c_2=0$ and $\mathcal{E}\cong\mathcal{O}(3)\oplus \mathcal{O}^{\oplus r-1}$;
\item[$(2)$] $c_2=2$ and $\mathcal{E}\cong\mathcal{O}(2)\oplus\mathcal{O}(1)\oplus \mathcal{O}^{\oplus r-2}$;
\item[$(3)$] $c_2=3$ and $\mathcal{E}\cong \mathcal{O}(1)^{\oplus 3}\oplus \mathcal{O}^{\oplus r-3}$;
\item[$(4)$] $c_2=3$, $n=2$, and $\mathcal{E}\cong T_{\mathbb{P}^2}\oplus \mathcal{O}^{\oplus r-2}$;
\end{enumerate}
$($In the following, $\mathcal{E}$ fits in one of the following exact sequences.$)$
\begin{enumerate}
\item[$(5)$] $c_2=3$ and 
$0\to\mathcal{O}(-1)\to\mathcal{O}(2)\oplus \mathcal{O}^{\oplus r}\to \mathcal{E}\to 0$;
\item[$(6)$] $c_2=4$ and 
$0\to\mathcal{O}(-1)\to\mathcal{O}(1)^{\oplus 2}\oplus\mathcal{O}^{\oplus r-1}\to \mathcal{E}\to 0$;
\item[$(7)$] $c_2=4$, $n=3$, and 
$0\to\mathcal{O}(-2)\to\mathcal{O}(-1)^{\oplus 4}\to\mathcal{O}(1)\oplus\mathcal{O}^{\oplus r+2}\to \mathcal{E}\to 0$;
\item[$(8)$] $c_2=4$, $n=4$, and 
\[0\to\mathcal{O}(-3)\to \mathcal{O}(-2)^{\oplus 5}\to\mathcal{O}(-1)^{\oplus 10}\to\mathcal{O}^{\oplus r+6}\to \mathcal{E}\to 0;\]
\item[$(9)$] $c_2=5$ and 
$0\to\mathcal{O}(-1)^{\oplus 2}\to\mathcal{O}(1)\oplus\mathcal{O}^{\oplus r+1}\to \mathcal{E}\to 0$;
\item[$(10)$] $c_2=5$, $n=3$ or $4$,
and 
$0\to\mathcal{O}(-2)\to \mathcal{O}(-1)^{\oplus 5}\to\mathcal{O}^{\oplus r+4}\to \mathcal{E}\to 0$;
\item[$(11)$] $c_2=6$ and 
$0\to\mathcal{O}(-1)^{\oplus 3}\to\mathcal{O}^{\oplus r+3}\to \mathcal{E}\to 0$;
\item[$(12)$] $c_2=6$ and 
$0\to\mathcal{O}(-2)\to\mathcal{O}(1)\oplus\mathcal{O}^{\oplus r}\to \mathcal{E}\to 0$;
\item[$(13)$] $c_2=7$ and 
$0\to\mathcal{O}(-2)\oplus\mathcal{O}(-1)\to\mathcal{O}^{\oplus r+2}\to \mathcal{E}\to 0$;
\item[$(14)$] $c_2=8$, $n=2$, and 
$
0\to \mathcal{O}(-2)^{\oplus 2}\to \mathcal{O}^{\oplus r+1}\oplus \mathcal{O}(-1)\to \mathcal{E}\to 0$;
\item[$(15)$] $c_2=9$ and
$0\to \mathcal{O}(-3)\to \mathcal{O}^{\oplus r+1}\to \mathcal{E}\to 0$;
\item[$(16)$] $c_2=9$, $n=2$, and 
$0\to \mathcal{O}(-2)^{\oplus 3}\to \mathcal{O}^{\oplus r}\oplus \mathcal{O}(-1)^{\oplus 3}\to \mathcal{E}\to 0$;
\item[$(17)$] $c_2=9$, $n=2$, and 
\[0\to \mathcal{O}(-3)^{\oplus r}\to \mathcal{O}(-2)^{\oplus 3r+3}
\to \mathcal{O}(-1)^{\oplus 3r+3}\to \mathcal{E}\to 0;\]
\item[$(18)$] $c_2=9$, $n=2$, and 
$0\to \mathcal{O}(-2)^{\oplus 4}\to \mathcal{O}(1)\oplus\mathcal{O}^{\oplus r-3}\oplus \mathcal{O}(-1)^{\oplus 6}\to \mathcal{E}\to 0$;
\item[$(19)$] $c_2=9$, $n\geq 3$, $c_3=27$, and $h^0(\mathcal{E}(-1))=1$;
\item[$(20)$] $c_2=9$, $n\geq 4$, $c_3=27$, $h^0(\mathcal{E}(-1))=0$, and $h^{n-3}(\mathcal{E}(2-n))=1$.
\end{enumerate}
\end{thm}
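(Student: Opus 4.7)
The plan is to follow the strategy of the author's earlier paper \cite{resolution}: study a suitable twist $\mathcal{E}(d)$ of $\mathcal{E}$ and apply the Beilinson-type spectral sequence associated with the full strong exceptional sequence $\mathcal{O},\mathcal{O}(1),\ldots,\mathcal{O}(n)$ on $\mathbb{P}^n$. Because $\mathcal{E}$ is nef with $c_1=3$, the twist $\mathcal{E}(j)$ is ample for $j\geq 1$, and the available vanishing theorems (Kodaira/Kawamata--Viehweg for the line bundles appearing in the resolution of the diagonal, together with the nefness of $\mathcal{E}$ itself) will kill most terms on the $E_1$-page. The surviving terms assemble into a complex of direct sums of line bundles whose only cohomology is $\mathcal{E}$, giving the presentations listed in (5)--(17).

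First I would proceed by case analysis on $c_2$, using the a priori bound $0\leq c_2\leq c_1^{2}=9$ that comes from nefness. The split cases (1)--(4) should be settled first: for these one checks that the Chern polynomial factors with non-negative integer roots summing to $3$, and then either deduces splitting from vanishing of $\Ext^1$ between the summands (via Horrocks criterion) or recognises the exceptional rank-two bundle $T_{\mathbb{P}^2}(-1)\oplus$ trivial summands on $\mathbb{P}^2$. The non-split cases are reduced to the spectral-sequence analysis.

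Second, for a non-split $\mathcal{E}$, the essential input is the table of dimensions $h^q(\mathbb{P}^n,\mathcal{E}(j))$ for small $j$. Nefness plus the ampleness of $\mathcal{E}(j)$ for $j\geq 1$ gives vanishing of higher cohomology in a definite range, Riemann--Roch expresses $\chi(\mathcal{E}(j))$ in terms of $c_1,c_2,\ldots,c_{\min(n,r)}$, and the extra Chern classes are controlled by the Schur positivity that nefness provides. This pins down each $h^0(\mathcal{E}(j))$, and hence the ranks of the terms in the Beilinson complex. Reading off that complex for each value of $c_2$ produces precisely the resolutions in (5)--(16); the identification of the map and its cokernel then matches the statement.

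Third, the existence half is handled by constructing each resolution explicitly (as a quotient or extension of line bundles and tangent bundles) and verifying that the resulting $\mathcal{E}$ is nef; here globally-generated cases are automatic, and the remaining cases $(14)$ and $(16)$ on $\mathbb{P}^2$ require a direct nefness check.

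The hard part, I expect, is twofold. First, one must rigorously constrain the higher Chern classes $c_3,c_4,\ldots$ of $\mathcal{E}$ using only nefness together with $c_1=3$ and the chosen $c_2$, since these enter the Riemann--Roch computation that drives the $h^0$ determination. Second, and more seriously, case (17) at $c_2=9$ on $\mathbb{P}^2$ is where the spectral sequence fails to collapse at $E_2$: a non-trivial differential $d_2\colon \mathcal{O}(-3)^{\oplus r+1}\to \mathcal{O}(-2)^{\oplus 3r+3}\oplus\mathcal{O}(-3)$ survives, and one must argue that its component into the $\mathcal{O}(-3)$ summand necessarily vanishes (otherwise the rank would be wrong, or nefness would fail). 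Together with verifying that the truly low-dimensional examples (14), (16), (17) admit no higher-dimensional analogues on $\mathbb{P}^n$ with $n\geq 3$, this will be the most delicate part of the analysis.
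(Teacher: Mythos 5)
Your proposal correctly identifies the main tool (the Bondal/Beilinson spectral sequence attached to the full strong exceptional sequence $\mathcal{O},\dots,\mathcal{O}(n)$), the reduction to the case $H^0(\mathcal{E}(-2))=0$, and the case division by $c_2$ with the bound $0\le c_2\le 9$. But the central step of your plan --- ``the surviving terms assemble into a complex of direct sums of line bundles whose only cohomology is $\mathcal{E}$,'' with the ranks ``pinned down'' by Riemann--Roch and nefness --- is where the argument has a genuine gap. The spectral sequence does \emph{not} collapse to a complex of line bundles: the $q=1$ row is computed from the simple modules $S_j$, whose pullbacks are $\Omega_{\mathbb{P}^n}^j(j)[j]$, so on $\mathbb{P}^2$ one is left with a four-term exact sequence
\[
0\to E_2^{-2,1}\to \mathcal{O}(-1)^{\oplus h^1(\mathcal{E}(-2))}\to \Omega_{\mathbb{P}^2}(1)^{\oplus h^1(\mathcal{E}(-1))}\to E_2^{-1,1}\to 0
\]
in which $\Omega_{\mathbb{P}^2}(1)$ genuinely appears, and Riemann--Roch only determines $\chi(\mathcal{E}(-1))=h^0(\mathcal{E}(-1))-h^1(\mathcal{E}(-1))$, not the two dimensions separately. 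The paper's entire engine is a separate geometric lemma (Lemma~\ref{key}) which exploits that $E_2^{-1,1}$ is a quotient of the nef bundle $\mathcal{E}$, hence cannot have a negative-degree line bundle quotient on (the normalization of) any curve; running this through the possible ranks of $H^0(\varphi(1))$ successively rewrites the left-hand term as $\mathcal{O}(-2)\oplus\mathcal{O}(-1)^{\oplus a-3}$, then $\mathcal{O}(-3)\oplus\mathcal{O}(-1)^{\oplus a-5}$ or $\mathcal{O}(-2)^{\oplus 2}\oplus\mathcal{O}(-1)^{\oplus a-6}$, and this is what forces $c_2\ge 6$ when $h^1(\mathcal{E}(-1))\ne 0$, gives $h^1(\mathcal{E}(-1))=1$ for $c_2\le 7$, and drives the entire $c_2=9$ analysis (including ruling out the subcases $(h^0(\mathcal{E}(-1)),h^1(\mathcal{E}(-1)))=(0,1)$ and $(1,2)$ when $(n,c_2,c_3)=(3,5,3)$ via explicit Koszul matrices). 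Nothing in your sketch supplies this mechanism, and without it the individual $h^i$'s, and hence the resolutions, cannot be determined.

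Two further points of divergence. First, the paper does not rederive the resolutions (5)--(13) from the spectral sequence in this version: it proves that $c_2\le 7$ forces global generation (Theorem~\ref{globalgeneration}) and then quotes the existing classification of globally generated bundles with $c_1=3$; your plan of reading everything off the Beilinson complex is closer to the earlier arXiv version but is substantially more work. Second, your diagnosis of case (17) is off: the map $d_2$ there is not a surviving spectral-sequence differential but the second map of a locally free resolution obtained by splicing the Koszul-type resolution $0\to\mathcal{O}(-3)\to\mathcal{O}(-2)^{\oplus 3}\to\mathcal{O}(-1)^{\oplus 3}\to\mathcal{O}\to 0$ with the presentation $0\to\mathcal{O}(-3)\to\mathcal{O}^{\oplus r}\to E_3^{0,0}\to 0$; the vanishing of the component of $d_2$ into $\mathcal{O}(-3)$ simply records that the extension $0\to E_3^{0,0}\to\mathcal{E}\to\mathcal{O}\to 0$ is non-split. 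The genuinely delicate part of $c_2=9$ is instead the elimination of the intermediate value $e_{0,1}=1$ and the dichotomy between the two shapes (\ref{exactseqForc2=9}) and (\ref{exactseqForc2=9WithoutBP}) of the $q=1$ row, neither of which your outline anticipates.
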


Note that every case except the cases (18), (19), and (20) in Theorem~\ref{c_1=3c_2<8} is effective:
examples of $\mathcal{E}$ in case (7) are 
$\mathcal{O}(1)\oplus \mathcal{O}^{\oplus r-4}\oplus \Omega_{\mathbb{P}^3}(2)$
and $\mathcal{O}(1)\oplus \mathcal{O}^{\oplus r-3}\oplus \mathcal{N}(1)$ where $\mathcal{N}$ is a null correlation bundle on $\mathbb{P}^3$
(see Remark~\ref{Rmk for the case (7)});
in case (8), $\mathcal{E}$ is given by a locally free resolution 
in terms of $\mathcal{O}(-3)$-twist of the full strong exceptional sequence
$\mathcal{O}, \mathcal{O}(1),\dots, \mathcal{O}(4)$ in accordance with \cite{resolution},
but $\mathcal{E}$ is in fact isomorphic to $\Omega_{\mathbb{P}^4}(2)\oplus\mathcal{O}^{\oplus r-4}$
(see Remark~\ref{Rmk for the case (8)});
in case (10), if $n=4$, $\mathcal{E}$ is nothing but an extension of 
the Tango 
bundle
\cite{MR0401766}
(see also \cite[Chap.\ I \S 4.3]{oss})
on $\mathbb{P}^4$ by a trivial bundle $\mathcal{O}^{\oplus r-3}$,
so that $\Omega_{\mathbb{P}^4}^2(3)$ is a typical example
(see Remark~\ref{TrautmannVetter}),
and if $n=3$, the restriction of such bundle to a hyperplane $\mathbb{P}^3$ is an example;
the case (14) comes from Proposition~\ref{exampleofnefbutnonGGvb} below 
(see \cite{Nefofc1=3c2=8OnPN} for details);
an example in case (16) is given in Example~\ref{Example of the case (16)};
the exact sequence in case (17) in fact derives from 
Example~\ref{ExampleOfNonGGWithc_2=9}.
On the other hand,
it is uncertain whether nef vector bundles exist or not in case (18), (19), or (20).
Note that
neither in case (19) nor in case(20) exist nef vector bundles if they does not exist  in case (18),
since the restriction to a plane of a nef vector bundle in case (19) or (20) lies in the case (18).

Anghel-Manolache \cite{MR3119690} and Sierra-Ugaglia \cite{MR3120618} classified 
globally generated vector bundles on a projective space
with first Chern class three.
Since global generation implies nefness, Theorem~\ref{c_1=3c_2<8} is a generalization of their results.
We also note that 
Langer
\cite{MR1633159}
classified smooth Fano $4$-folds with adjunction theoretic scroll structure
and $b_2=2$; 
his classification includes
that of nef and big rank $2$ bundles with $c_1=3$ on $\mathbb{P}^2$ and $\mathbb{P}^3$.

As in the proof in 
case 
$c_1\leq 2$ in \cite{resolution},
the main feature of our proof of Theorem~\ref{c_1=3c_2<8}
is  an application of the spectral sequence deduced 
in \cite[Theorem 1]{MR3275418}
from Bondal's theorem~\cite[Theorem 6.2]{MR992977}.
Besides this spectral sequence, some of the key ingredients of our proof are
the Riemann-Roch formula, 
the Kawamata-Viehweg vanishing theorem~\cite{MR0675204} \cite{MR0667459},
and the non-negativity of Chern classes of nef vector bundles
(see, e.g., \cite[Theorem 8.2.1]{MR2095472}).

Whereas global generation implies nefness, the converse is not true in general;
indeed, the (scheme-theoretic) support 
of the cokernel of the evaluation map $H^0(\mathcal{E})\otimes \mathcal{O}\to \mathcal{E}$
in cases (14), (16), and  (17)
in Theorem~\ref{c_1=3c_2<8}
is,  respectively, 
a point $w$, 
a cubic curve $E$,
and 
the whole $\mathbb{P}^2$.
(If there exists an example in case (18), 
its corresponding support is also the whole $\mathbb{P}^2$.
Hence examples in cases (18), (19), or (20) shall be also nef but non-globally generated
if they exist.)
In fact,
the evaluation map in case (14) 
fits in the exact sequence in 
the following proposition.

\begin{prop}\label{exampleofnefbutnonGGvb}
Given an integer $r\geq 2$ and a point $w$ in a projective plane $\mathbb{P}^2$, 
there exists a vector bundle $\mathcal{E}$ fitting in an exact sequence
\[
0\to \mathcal{O}(-3)\to \mathcal{O}^{\oplus r+1}\to \mathcal{E}\to k(w)\to 0
\]
where $k(w)$ denotes the residue field of the point $w$.
Moreover a vector bundle $\mathcal{E}$ fitting in the sequence above is nef
but non-globally generated with 
$c_1=3$
and 
$c_2=8$.
\end{prop}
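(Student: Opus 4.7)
The plan is to construct $\mathcal{E}$ as the double dual of a torsion-free sheaf with a mild singularity at $w$, and then to verify each property in turn.

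First I would choose $r+1\geq 3$ cubic forms $f_0,\ldots,f_r$ vanishing at $w$ whose linear parts at $w$ span the $2$-dimensional cotangent space $\mathfrak{m}_w/\mathfrak{m}_w^2$ and which have no other common zero; this is possible by a dimension count in the $9$-dimensional space $H^0(\mathcal{I}_w(3))$. Set $\mathcal{F}:=\coker(\mathcal{O}(-3)\xrightarrow{(f_i)}\mathcal{O}^{\oplus r+1})$. A local analysis at $w$ using the spanning condition puts $(f_i)$ into the normal form $(x,y,0,\ldots,0)$ after a change of basis of $\mathcal{O}^{\oplus r+1}$, so that $\mathcal{F}_w\cong\mathfrak{m}_w\oplus\mathcal{O}_{\mathbb{P}^2,w}^{\oplus r-1}$. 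Set $\mathcal{E}:=\mathcal{F}^{\vee\vee}$: since $\mathfrak{m}_w^{\vee\vee}=\mathcal{O}_{\mathbb{P}^2,w}$, this is a reflexive (hence, on the smooth surface $\mathbb{P}^2$, locally free) rank-$r$ sheaf with $\coker(\mathcal{F}\hookrightarrow\mathcal{E})=k(w)$; splicing the two short exact sequences yields the four-term sequence of the statement. Chern class multiplicativity gives $c(\mathcal{E})=(1-h^2)/(1-3h)=1+3h+8h^2$, so $c_1=3$ and $c_2=8$.

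For non-global-generation, I would prove $h^0(\mathcal{E})=h^0(\mathcal{F})=r+1$, which forces the image of the evaluation map $H^0(\mathcal{E})\otimes\mathcal{O}\to\mathcal{E}$ to coincide with $\mathcal{F}$ and its cokernel to be $k(w)\neq 0$. From the first short exact sequence $h^1(\mathcal{F})\cong H^2(\mathcal{O}(-3))\cong k$, so the claim reduces to showing the connecting homomorphism $\delta:H^0(k(w))\to H^1(\mathcal{F})$ of the second sequence is nonzero between $1$-dimensional spaces. Under the canonical isomorphism $\Ext^2(k(w),\mathcal{O}(-3))\cong\Ext_R^2(k,R)\cong k$ (via the local-to-global spectral sequence and local duality for the regular local ring $R=\mathcal{O}_{\mathbb{P}^2,w}$), the map $\delta$ corresponds to the Yoneda class of the $4$-term sequence; locally at $w$ this sequence is a length-$3$ free resolution of the residue field of $R$, whose Yoneda class generates $\Ext_R^2(k,R)$, so $\delta\neq 0$.

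The main obstacle is proving nefness. I would invoke the classical fact that a vector bundle on $\mathbb{P}^n$ is nef if and only if its restriction to every line is nef. For a line $\ell\not\ni w$, $\mathcal{F}$ is locally free along $\ell$, so $\mathcal{E}|_\ell=\mathcal{F}|_\ell$ is a quotient of $\mathcal{O}_\ell^{\oplus r+1}$, hence globally generated and nef. For $\ell\ni w$, a local computation from $\mathcal{F}_w\cong\mathfrak{m}_w\oplus\mathcal{O}_{\mathbb{P}^2,w}^{\oplus r-1}$ yields $\caltor_1(\mathcal{F},\mathcal{O}_\ell)=0$, so $0\to\mathcal{O}_\ell(-3)\to\mathcal{O}_\ell^{\oplus r+1}\to\mathcal{F}|_\ell\to 0$ is exact. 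Tensoring the second sequence with $\mathcal{O}_\ell$ and applying the snake lemma identifies the torsion subsheaf of $\mathcal{F}|_\ell$ with $k(w)$ and exhibits its torsion-free quotient $M$ as the cokernel of $\mathcal{O}_\ell(-2)\hookrightarrow\mathcal{O}_\ell^{\oplus r+1}$. Thus $M$ is globally generated of rank $r$ and degree $2$ on $\mathbb{P}^1$, forcing $M\cong\mathcal{O}_\ell(2)\oplus\mathcal{O}_\ell^{\oplus r-1}$ or $\mathcal{O}_\ell(1)^{\oplus 2}\oplus\mathcal{O}_\ell^{\oplus r-2}$. Finally $\mathcal{E}|_\ell$ sits in $0\to M\to\mathcal{E}|_\ell\to k(w)\to 0$ as an inverse elementary transformation of $M$, whose splitting type is that of $M$ with exactly one summand's degree raised by $1$; every summand remains nonnegative, so $\mathcal{E}|_\ell$ is nef.
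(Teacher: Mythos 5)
Your construction of $\mathcal{E}$ as the double dual of $\mathcal{F}=\coker(\mathcal{O}(-3)\to\mathcal{O}^{\oplus r+1})$, the local identification $\mathcal{F}_w\cong\mathfrak{m}_w\oplus\mathcal{O}_{\mathbb{P}^2,w}^{\oplus r-1}$, the Chern class computation, and the non-global-generation argument are all sound. In particular, your proof that the connecting map $H^0(k(w))\to H^1(\mathcal{F})$ is nonzero --- via the Yoneda class of the four-term sequence and duality, which is especially clean here because $\mathcal{O}(-3)=\omega_{\mathbb{P}^2}$ --- is a legitimate alternative to the paper's route, which instead deduces $h^1(\mathcal{E})=0$ from nefness, $c_2=8<9$, and the Kawamata--Viehweg vanishing, and then compares $H^0(\mathcal{E}_0)$ with $H^0(\mathcal{E})$.

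The nefness argument, however, has a genuine gap: it is \emph{not} a fact that a vector bundle on $\mathbb{P}^n$ is nef as soon as its restriction to every line is nef. Nefness must be tested on finite covers of arbitrary curves, with no bound on their degree; this is precisely why the paper itself repeatedly tests nefness on conics and cubics (cf.\ Lemma~\ref{conicpassing5points}). A counterexample very close to the bundles of this paper: let $E$ be a smooth plane cubic, $\mathfrak{d}$ a divisor class of degree $-1$ on $E$, and let $\mathcal{E}'$ be a locally free extension $0\to\mathcal{O}^{\oplus 2}\to\mathcal{E}'\to\mathcal{O}_E(\mathfrak{d})\to 0$; such extensions exist because $\calext^1(\mathcal{O}_E(\mathfrak{d}),\mathcal{O})\cong\mathcal{O}_E(-\mathfrak{d})\otimes\mathcal{O}_E(3)$ has degree $10$ on $E$ and is therefore generated by two general sections. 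For every line $\ell$ the restricted sequence remains exact and exhibits $\mathcal{E}'|_\ell$ as a length-$3$ elementary modification of $\mathcal{O}_\ell^{\oplus 2}$, so $\mathcal{E}'|_\ell\cong\mathcal{O}_\ell(a)\oplus\mathcal{O}_\ell(b)$ with $a,b\geq 0$; yet $\mathcal{E}'|_E$ has the degree $-1$ quotient $\mathcal{O}_E(\mathfrak{d})$, so $\mathcal{E}'$ is not nef. Your splitting-type computation on lines is therefore correct but does not suffice. The repair is exactly the paper's Lemma~\ref{doubledualoftorsionfreequotientofnefvbonSmoothsurfaceisanefvb}: for \emph{any} finite morphism $C\to\mathbb{P}^2$ from a smooth curve and any quotient line bundle $\mathcal{L}$ of $\mathcal{E}|_C$, the map $\mathcal{F}|_C\to\mathcal{E}|_C$ is generically an isomorphism, so the image $\mathcal{M}$ of $\mathcal{F}|_C\to\mathcal{L}$ is a rank-one subsheaf of $\mathcal{L}$ that is also a quotient of $\mathcal{O}_C^{\oplus r+1}$, hence globally generated of non-negative degree, whence $\deg\mathcal{L}\geq\deg\mathcal{M}\geq 0$. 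This costs no more than your case $\ell\not\ni w$ and makes the analysis of lines through $w$ unnecessary.
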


In \cite[Theorem 1.1]{Nefofc1=3c2=8OnPN}, 
we classified
nef vector bundles on a projective space 
with $c_1=3$ and $c_2=8$
(based on the previous version of this manuscript): we showed that such bundles exist only on a projective plane
and every such bundle derives from the exact sequence given in Proposition~\ref{exampleofnefbutnonGGvb}
and fits in the exact sequence (14) in Theorem~\ref{c_1=3c_2<8}.
Note that the parts of the previous version of the manuscript on which the argument in \cite{Nefofc1=3c2=8OnPN} are relied are kept 
the same in this major revised manuscript.

This paper is organized as follows.
In \S \ref{Preliminaries}, we recall Bondal's theorem~\cite[Theorem 6.2]{MR992977}
and its related results including the spectral sequence deduced in \cite[Theorem 1]{MR3275418}.
The results in \S \ref{Preliminaries} are fundamental throughout this paper.
In \S \ref{SetUp}, we begin our proof(s) of Theorem~\ref{c_1=3c_2<8}
(and of Theorem~\ref{globalgeneration} below);
based on results
in \cite{resolution}, 
we reduce the problem to the case where 
$H^0(\mathcal{E}(-2))=0$. We also show in \S \ref{SetUp} that 
this reduction enables us to assume that 
$3\leq c_2$. 
Several other formulas---such as the Riemann-Roch formulas---used repeatedly in this paper
are also presented in \S \ref{SetUp}.
In \S \ref{Lemmas}, we collect some lemmas 
needed later.
In \S \ref{KeyLemma}, we give a key lemma, Lemma~\ref{key},
which together with exact sequence (\ref{E_2^{-1,1}quotient}) in \S \ref{Set-up for the two-dimensional case} 
is crucial for the whole proof.
In \S \ref{shortcut},
we show that nef vector bundles on a projective space with $c_1=3$ and $c_2\leq 7$ are globally generated
(Theorem~\ref{globalgeneration}).
In \S \ref{Proof of Main Theorem}, we give a proof of Theorem~\ref{c_1=3c_2<8};
we omit the proof 
in case $c_2\leq 7$,
since we have Theorem~\ref{globalgeneration}
and globally generated vector bundles with $c_1=3$ are classified in \cite{MR3119690} and \cite{MR3120618}.
Similarly the proof in case $c_2=8$ is omitted
since it is already given in \cite{Nefofc1=3c2=8OnPN};
thus we only give a proof 
in case $c_2=9$.
Note that our presentation of the classification in case $c_2\leq 7$
is different from both of \cite{MR3119690} and \cite{MR3120618};
the situation is the same as in \cite{MR3119690} and \cite{MR3120618}:
the presentations of the classification in \cite{MR3119690} and \cite{MR3120618}
are different because their methods of proofs are different;
similarly our presentation differs from those of \cite{MR3119690} and \cite{MR3120618}
because of our new proof.
A reader who wonders where the presentation of Theorem~\ref{c_1=3c_2<8} 
in 
case $c_2\leq 7$
comes from may find its proof in the previous version (arXiv 1604.05847 version 4)
of the manuscript.
In \S \ref{RmksOnMainTheorem}
we 
give several remarks 
related to the cases (7), (8), and (10) 
and an example of the case (16) 
in Theorem~\ref{c_1=3c_2<8}.
We end \S \ref{RmksOnMainTheorem}
with a question
about the cokernel of the evaluation map $H^0(\mathcal{E})\otimes \mathcal{O}\to \mathcal{E}$
of a nef vector bundle in case (16) of Theorem~\ref{c_1=3c_2<8}.
In \S \ref{nefbutNonGG}, we give a proof of Proposition~\ref{exampleofnefbutnonGGvb}.
Finally note that,
since globally generated vector bundles are nef, some properties of globally generated vector bundles
also hold more generally for nef vector bundles, but some do not;
in \S \ref{PropertyOfNefNonGG}, 
we give two examples 
which illustrate that a typical exact sequence
for globally generated vector 
bundles---related to the degeneracy locus of general global sections---does 
not necessarily exist
for nef vector bundles.

\subsection{Notation and conventions}\label{convention}
Throughout this paper
we work over an algebraically closed field $K$
of characteristic zero.
Basically we follow the standard notation and terminology in algebraic
geometry. 
For 
a vector bundle $\mathcal{E}$,
$\mathbb{P}(\mathcal{E})$ denotes $\proj S(\mathcal{E})$,
where $S(\mathcal{E})$ denotes the symmetric algebra of $\mathcal{E}$.
The tautological line bundle $\mathcal{O}_{\mathbb{P}(\mathcal{E})}(1)$
is also denoted by $H(\mathcal{E})$.
We denote by $\mathcal{E}^{\vee}$ the dual of $\mathcal{E}$.
For a 
coherent sheaf $\mathcal{F}$ on a smooth projective variety $X$,
we denote by $c_i(\mathcal{F})$ the $i$-th Chern class of $\mathcal{F}$.
In particular, 
$c_i$ stands for $c_i(\mathcal{E})$
of the nef vector bundle $\mathcal{E}$ we are dealing with.
We say that a vector bundle is 
(non-)globally generated
if it is (not) generated by global sections.
For coherent sheaves $\mathcal{F}$ and $\mathcal{G}$ on $X$,
$h^q(\mathcal{F})$ denotes $\dim H^q(\mathcal{F})$,
and $\hom(\mathcal{F},\mathcal{G})$ denotes
$\dim \Hom(\mathcal{F},\mathcal{G})$.
For any closed subscheme $Z$ in $\mathbb{P}^n$, denote by $\mathcal{I}_Z$
the ideal sheaf of $Z$ in $\mathbb{P}^n$.
%Moreover the inverse image ideal sheaf of $\mathcal{I}_Z$ to a curve $C\subset \mathbb{P}^n$
%is denoted by $\mathcal{I}_Z\cdot \mathcal{O}_C$,
%and the $d$-th twist of it is denoted by $\mathcal{I}_Z\cdot \mathcal{O}_C(d)$.
Finally we refer to \cite{MR2095472} for the definition
and basic properties of nef vector bundles.

\section{Preliminaries}\label{Preliminaries}
In our proof of Theorem~\ref{c_1=3c_2<8},  we shall apply repeatedly a spectral sequence deduced in \cite[Theorem 1]{MR3275418}.
So we shall recall this sequence in this section.
Note that this sequence is 
a corollary of Bondal's theorem~\cite[Theorem 6.2]{MR992977}
as can be seen below.

Let $X$ be a smooth projective variety over $K$,
$D^b(X)$ the bounded derived category of 
the abelian category of coherent sheaves on $X$.
Assume that there exists a full strong exceptional sequence 
$G_0,\dots,G_m$ in $D^b(X)$,
and let 
$G$ be the direct sum $\oplus_{j=0}^mG_j$ 
in $D^b(X)$.

Recall that if $X=\mathbb{P}^n$
then $\mathcal{O},\mathcal{O}(1),\dots,\mathcal{O}(n)$
is a full strong exceptional sequence in $D^b(\mathbb{P}^n)$
by Beilinson's theorem~\cite[Theorem]{MR0509388}.

Let $A$ be the endomorphism ring $\End_{D^b(X)}(G)$ of $G$,
and let $e_j$ be the composite of the projection 
$G\to G_j$ and the inclusion $G_j\to G$.
Then $e_j\in A$. Define a right $A$-module $P_j$ by $P_j=e_jA$. 
The natural isomorphism $A\cong \oplus_{j=0}^mP_j$ of right $A$-modules implies that $P_j$ is a projective right $A$-module.
Let $S_j$ be the simple right $A$-module
such that $S_je_j\cong K$ and $S_je_k\cong 0$ for all $k\neq j$
and $0\leq k\leq m$.

Let $D^b(\module A)$ be the bounded derived category of 
the abelian category $\module A$ of finitely generated right $A$-modules.
Bondal's theorem~\cite[Theorem 6.2]{MR992977}
states that $\RHom (G,\bullet):D^b(X)\to D^b(\module A)$ is an exact equivalence.
Since $\bullet\lotimes G:D^b(\module A)\to D^b(X)$ is a quasi-inverse of $\RHom (G,\bullet)$,
we have a natural isomorphism 
\[\RHom(G,\bullet)\lotimes G\cong \bullet\]
of functors on $D^b(X)$.
If we write down this isomorphism for a coherent sheaf $F$ on $X$ in terms of a spectral sequence,
we obtain the following spectral sequence (\cite[Theorem~1]{MR3275418})
\begin{equation}\label{BondalSpectralSequence}
E_2^{p,q}=\caltor_{-p}^A(\Ext^q(G,F),G)
\Rightarrow
E^{p+q}=
\begin{cases}
F& \textrm{if}\quad  p+q= 0\\
0& \textrm{if}\quad  p+q\neq 0.
\end{cases}
\end{equation}
We call this sequence the Bondal spectral sequence.
In practice, in order to apply the Bondal spectral sequence (\ref{BondalSpectralSequence}), we need to compute 
$E_2^{p,q}$.
One way to compute 
$E_2^{p,q}$
is by definition: $E_2^{p,q}=\caltor_{-p}^A(\Ext^q(G,F),G)$,
i.e., through a projective resolution of the right 
$A$-module $\Ext^q(G,F)$.
Recall here (see \cite[Lemma 2.1]{resolution} for a proof)
that a finitely generated right $A$-module $\Ext^q(G,F)$ has a projective resolution 
of the form
\begin{equation}\label{projresolingeneral}
0\to P_0^{\oplus e_{m,0}}\to\dots\to \bigoplus_{j=0}^{m-l}P_{j}^{\oplus e_{l,j}}\to\dots\to
\bigoplus_{j=0}^{m}P_{j}^{\oplus e_{0,j}}\to \Ext^q(G,F)\to 0
\end{equation}
where $e_{0,j}
=\dim \Ext^q(G_j,F)$ for 
$0\leq j\leq m$
and $e_{l,j}$ is determined inductively for 
$l\geq 1$ and $j\leq m-l$  by the following formula:
\[e_{l,j}=\sum_{j<k}e_{l-1,k}
\hom
(G_j,G_k).\]
We shall freely use the following isomorphism:
\[P_j\lotimes_AG=P_j\otimes_AG\cong G_j.\]
This isomorphism together with 
(\ref{projresolingeneral}) implies that 
$E_2^{p,q}$
is the $(-p)$-th homology of the following complex
\[
0\to G_0^{\oplus e_{m,0}}\to\dots\to \bigoplus_{j=0}^{m-l}G_{j}^{\oplus e_{l,j}}\to\dots\to
\bigoplus_{j=0}^{m}G_{j}^{\oplus e_{0,j}}
\to 0.
\]

Throughout this paper,
we set
$X=\mathbb{P}^n$, $m=n$,
and $G_j=\mathcal{O}(j)$ for $0\leq j\leq n$,
and we fix the notation $G_j$, 
$G$, $A$, $P_j$, and $S_j$
as above.

A typical projective resolution 
of 
the form (\ref{projresolingeneral})
in this paper
is in the case where $q=0$ and $F=\mathcal{E}(d)$ for some non-negative integer $d$.
For example, if $\hom (\mathcal{O}(2),\mathcal{E}(d))=0$, we frequently 
and sometimes implicitly consider a projective resolution 
of the form
\begin{equation}\label{typicalprojresol}
0\to P_0^{\oplus (n+1)e_{0,1}}\to P_1^{\oplus e_{0,1}}\oplus P_0^{\oplus e_{0,0}}\to \Hom (G,\mathcal{E}(d))\to 0
\end{equation}
where $e_{0,0}=\hom (\mathcal{O},\mathcal{E}(d))$ and $e_{0,1}=\hom (\mathcal{O}(1),\mathcal{E}(d))$.

Finally note that the Bott formula 
\cite[p.\ 8]{oss}
implies 
$\RHom(G,\Omega_{\mathbb{P}^n}^j(j))\cong S_j[-j]$ for 
$0\leq j\leq n$.
Hence we have isomorphisms  
\begin{equation}\label{Sjpullback}
S_j\lotimes_AG\cong \Omega_{\mathbb{P}^n}^j(j)[j].
\end{equation}
for $0\leq j\leq n$.
In particular,
\begin{equation}\label{Snpullback}
S_n\lotimes_AG\cong \mathcal{O}(-1)[n].
\end{equation}
Note that this gives another way to compute $E_2^{p,q}=\mathcal{H}^{p}(\Ext^q(G,\mathcal{E}(d))\lotimes_AG)$:
through a filtration of $\Ext^q(G,\mathcal{E}(d))$ with subquotients the direct sums of the simple modules $S_j$.
For example, if $\Ext^q(G,\mathcal{E}(d))\cong S_j$ then 
$E_2^{p,q}
=\mathcal{H}^{p}(\Omega_{\mathbb{P}^n}^j(j)[j])$,
and thus $E_2^{p,q}=0$ if $p\neq -j$
and $E_2^{-j,q}=\Omega_{\mathbb{P}^n}^j(j)$.
In particular, if $\Ext^q(G,\mathcal{E}(d))\cong S_n$ then $E_2^{-n,q}=\mathcal{O}(-1)$ and $E_2^{p,q}=0$ if $p\neq -n$.
We shall also use these formulas frequently.

\section{Set-up and 
formulas for the proofs of Theorems~\ref{c_1=3c_2<8}
and \ref{globalgeneration}}\label{SetUp}

In this section,
we give some preparatory parts of our proofs
of Theorems~\ref{c_1=3c_2<8}
and \ref{globalgeneration},
and we also collect several formulas needed later.

Let $\mathcal{E}$ be a nef vector bundle of rank $r$ on a projective space $\mathbb{P}^n$
with $c_1=3$.
If 
$\Hom (\mathcal{O}(3),\mathcal{E})\neq 0$,
then 
$\mathcal{E}\cong \mathcal{O}(3)\oplus \mathcal{O}^{\oplus r-1}$
by \cite[Proposition 5.2 and Remark 5.3]{resolution}. 
This is the case (1) of Theorem~\ref{c_1=3c_2<8}.
Assume that $\Hom (\mathcal{O}(3),\mathcal{E})= 0$.
Then $r\geq 2$.
If 
$\Hom (\mathcal{O}(2),\mathcal{E})\neq 0$,
then 
it follows from \cite[Theorem 6.4]{resolution} that 
$\mathcal{E}$ is in the case (2) or (5) of Theorem~\ref{c_1=3c_2<8}.

In the rest of our proof, we always assume that 
\begin{equation}\label{first assumption}
H^0(\mathcal{E}(-2))=\Hom (\mathcal{O}(2),\mathcal{E})= 0.
\end{equation}

Recall that the Kodaira vanishing theorem implies 
that 
\begin{equation}\label{first vanishing}
H^q(\mathcal{E}|_{L^l}(3-k))=0
\end{equation}
for all $q>0$,
all $l$-dimensional linear subspaces $L^l\subseteq \mathbb{P}^n$,
and all $k\leq l$,
since $\mathcal{E}$ is a nef vector bundle with $c_1=3$
(see \cite[Lemma 4.1 (1)]{resolution} for a proof).
If
$H(\mathcal{E}|_{L^l})$ is 
big
in addition,
then the Kawamata-Viehweg vanishing theorem implies that
\begin{equation}\label{KVvanishing in arbitrary dim}
H^q(\mathcal{E}|_{L^l}(2-k))=0
\end{equation}
for all $q>0$ and all $k\leq l$(see \cite[Lemma 4.1 (2)]{resolution} for a proof).

Since $\mathcal{E}$ is nef,
$H^q(\mathcal{E}|_{L}(1))=0$ for all $q>0$ and all lines $L$ in $\mathbb{P}^n$.
Together with (\ref{first vanishing}), this implies that 
$
H^2(\mathcal{E}|_{L^2})=0
$
for any plane $L^2\subseteq\mathbb{P}^n$.
This vanishing $H^2(\mathcal{E}|_{L^2})=0$ then implies that 
$
H^2(\mathcal{E}|_{L^2}(-1))=0
$
since $H^q(\mathcal{E}|_{L})=0$ for all $q>0$. 
Moreover we have 
$
H^2(\mathcal{E}|_{L^2}(-2))=0
$
since $H^q(\mathcal{E}|_{L}(-1))=0$ for all $q>0$. 
Summing up, we have 
\begin{equation}\label{H^2vanishing}
H^2(\mathcal{E}|_{L^2}(-k))=0
\end{equation}
for all $k\leq 2$ and any plane $L^2$ in $\mathbb{P}^n$.

If $n=2$, the Riemann-Roch formula for a twisted vector bundle $\mathcal{E}(t)$
is
\begin{equation}\label{RRonP2}
\chi(\mathcal{E}(t))=\dfrac{1}{2}(rt+6)(t+3)+r-c_2.
\end{equation}
Recall that this formula is for $c_1=c_1(\mathcal{E})=3$.
It follows from this formula that 
$\chi(\mathcal{E}(-2))=3-c_2$.
For arbitrary $n\geq 2$,
the vanishing (\ref{H^2vanishing})
then implies that 
\begin{equation}\label{chiE-2}
h^0(\mathcal{E}|_{L^2}(-2))-h^1(\mathcal{E}|_{L^2}(-2))=3-c_2(\mathcal{E}|_{L^2})
\end{equation}
for any plane $L^2$ in $\mathbb{P}^n$.

We claim here that
\[
c_2
\geq 3
\] on $\mathbb{P}^n$.
Suppose, to the contrary, that 
$c_2
\leq 2$.
Then $\chi(\mathcal{E}|_{L^2}(-2))=3-c_2(\mathcal{E}|_{L^2})\geq 1$
since 
$c_2
=c_2(\mathcal{E}|_{L^2})$.
Hence we obtain $h^0(\mathcal{E}|_{L^2}(-2))\neq 0$.
As we have seen, this implies that $\mathcal{E}|_{L^2}$ lies 
either in the case (1), (2), or (5) of Theorem~\ref{c_1=3c_2<8};
if $\mathcal{E}|_{L^2}$ lies in the case (5)
then $c_2(\mathcal{E}|_{L^2})=3$,
which
contradicts that 
$c_2
\leq 2$.
Thus $\mathcal{E}|_{L^2}$ actually lies either in the case (1) or (2) of Theorem~\ref{c_1=3c_2<8}.
In particular it 
splits 
into a direct sum of line bundles.
Hence $\mathcal{E}$ 
also 
splits
by the splitting criterion 
\cite[Theorem 2.3.2]{oss} 
of Horrocks.
Since $h^0(\mathcal{E}|_{L^2}(-2))\neq 0$, this implies that $h^0(\mathcal{E}(-2))\neq 0$,
which contradicts the assumption~(\ref{first assumption}). Hence the claim follows.

Since $\mathcal{E}|_{L^2}$ is nef, 
we have 
an inequality
$0\leq H(\mathcal{E}|_{L^2})^{r+1}$.
Since $H(\mathcal{E}|_{L^2})^{r+1}$
is equal to 
$c_1(\mathcal{E}|_{L^2})^2-c_2(\mathcal{E}|_{L^2})$
(see \cite[\S 3.1 and \S 3.2]{fl}),
we obtain $c_2(\mathcal{E}|_{L^2})\leq 9$.
Hence we 
have 
\[
c_2
\leq 9.
\]
Recall here the well-known non-negativity (see, e.g., \cite[Theorem 8.2.1]{MR2095472})
of the top Chern class of a nef vector bundle $\mathcal{E}$
that 
\begin{equation}\label{topnonnegative}
c_n
\geq 0.
\end{equation}

We shall divide the proof of Theorem~\ref{c_1=3c_2<8} according to the value of 
$c_2
\geq 3
$.

Note that $H(\mathcal{E}|_{L^2})$ is nef and big
if $c_2< 9$.
The vanishing 
(\ref{KVvanishing in arbitrary dim})
then implies that 
\begin{equation}\label{KVvanishing}
H^q(\mathcal{E}|_{L^2})=0
\end{equation}
for any $q>0$ and any plane $L^2$ in $\mathbb{P}^n$.
Together with the vanishing~(\ref{first vanishing}),
this implies that 
\begin{equation}\label{H2ijouvanishingforE(-1)onP3}
H^q(\mathcal{E}|_{L^3}(-1))=0
\end{equation}
for any $q\geq 2$ and any three-dimensional linear subspace $L^3\subset\mathbb{P}^n$.

\subsection{Set-up for the two-dimensional case with $h^1(\mathcal{E})=0$}\label{Set-up for the two-dimensional case}
Note that the vanishing $h^1(\mathcal{E})=0$ holds if 
$c_2<9$ by (\ref{KVvanishing}).
It follows from (\ref{first assumption}) and (\ref{chiE-2}) that 
\begin{equation}\label{h1E-2onP2}
h^1(\mathcal{E}(-2))=c_2-3.
\end{equation}
The Riemann-Roch formula (\ref{RRonP2}) 
shows 
that 
$\chi(\mathcal{E}(-1))=6-c_2$
and $\chi(\mathcal{E})=9+r-c_2$.
Since we have the vanishings (\ref{H^2vanishing}) and $h^1(\mathcal{E})=0$, these formulas imply
\begin{gather}
h^0(\mathcal{E}(-1))-h^1(\mathcal{E}(-1))=6-c_2,\label{RRonP2(-1)}\\
h^0(\mathcal{E})=9+r-c_2.\label{RRonP2(0)}
\end{gather}
Since we have an exact sequence
$
0\to \mathcal{E}(-2)\to \mathcal{E}(-1)\to \mathcal{E}|_{L}(-1)\to 0,
$
by taking cohomology, 
we see that 
\begin{equation}\label{h1E-2biggerthanh1E-1}
h^1(\mathcal{E}(-2))
\geq h^1(\mathcal{E}(-1)).
\end{equation}

Now
we apply to $\mathcal{E}$ the Bondal spectral sequence (\ref{BondalSpectralSequence}).
It is clear that $E_2^{p,q}=0$ if $q< 0$ or $p>0$.
The vanishing~(\ref{H^2vanishing}) shows that $E_2^{p,q}=0$ 
if
$q\geq 2$.
Since $H^1(\mathcal{E})= 0$ by assumption,
the right $A$-module $\Ext^1(G,\mathcal{E})$ fits in 
an exact sequence
\[
0\to S_1^{\oplus h^1(\mathcal{E}(-1))}\to \Ext^1(G,\mathcal{E})\to S_2^{\oplus 
h^1(\mathcal{E}(-2))
}\to 0.
\]
Since $S_1\lotimes_AG\cong \Omega_{\mathbb{P}^2}(1)[1]$
and $S_2\lotimes_AG\cong \mathcal{O}(-1)[2]$ by (\ref{Sjpullback}), 
the sequence above induces the following distinguished triangle
in $D^b(X)$:
\[\mathcal{O}(-1)^{\oplus h^1(\mathcal{E}(-2))}[1] 
\to \Omega_{\mathbb{P}^2}(1)^{\oplus h^1(\mathcal{E}(-1))}[1]\to \Ext^1(G,\mathcal{E})\lotimes_AG\to.
\]
Since $E_2^{p,1}=\mathcal{H}^{p}(\Ext^1(G,\mathcal{E})\lotimes_AG)$, the triangle above 
shows that $E_2^{p,1}=0$ unless $p=-2$ or $-1$ and that $E_2^{-2,1}$ and $E_2^{-1,1}$ fit in 
the following 
exact sequence of coherent sheaves:
\begin{equation}\label{exactseq}
0\to E_2^{-2,1}\to 
\mathcal{O}(-1)^{\oplus h^1(\mathcal{E}(-2))} 
\xrightarrow{\mu} \Omega_{\mathbb{P}^2}(1)^{\oplus h^1(\mathcal{E}(-1))}
\to E_2^{-1,1}\to 0.
\end{equation}
It follows from (\ref{first assumption}) that the right $A$-module $\Hom(G,\mathcal{E})$ has, as in (\ref{typicalprojresol}),
a projective resolution of the form
\[
0\to P_0^{\oplus 3e_{0,1}}\to P_1^{\oplus e_{0,1}}\oplus P_0^{\oplus e_{0,0}}\to \Hom (G,\mathcal{E})\to 0
\]
where $e_{0,0}=h^0(\mathcal{E})$ and $e_{0,1}=h^0(\mathcal{E}(-1))$.
In particular, we see that $E_2^{p,0}=0$ if $p<-1$.
We have the following exact sequence
\begin{equation}\label{E_3^{0,0}definition}
0\to E_2^{-2,1}\to E_2^{0,0}\to E_3^{0,0}\to 0.
\end{equation}
Now the Bondal spectral sequence implies 
that $E_2^{-1,0}=0$ and 
that $\mathcal{E}$
fits in an exact sequence
\begin{equation}\label{E_2^{-1,1}quotient}
0\to E_{3}^{0,0}\to \mathcal{E}\to E_2^{-1,1}\to 0.
\end{equation}
Since $E_2^{-1,0}=0$, $E_2^{0,0}$ fits in an exact sequence
\begin{equation}\label{E_2^00 exact sequence in dim 2}
0\to \mathcal{O}^{\oplus 3e_{0,1}}\to \mathcal{O}(1)^{\oplus e_{0,1}}\oplus \mathcal{O}^{\oplus e_{0,0}}\to E_2^{0,0}\to 0.
\end{equation}
The following lemma shall be applied repeatedly throughout this paper.
\begin{lemma}\label{very often}
For any finite morphism $C\to \mathbb{P}^2$ from a smooth projective curve $C$,
the pullback $E_2^{-1,1}|_C$ of the sheaf $E_2^{-1,1}$
can not admit a line bundle of negative degree as a quotient.
In particular, $E_2^{-1,1}$ can not admit the following sheaves as a quotient:
\begin{enumerate}
\item[$(1)$] $\Omega_{\mathbb{P}^2}(1)$; 
\item[$(2)$] $\mathcal{I}_p$, where $p$ is a point;
\item[$(3)$] $\mathcal{I}_Z(1)$, where $Z$ is a $0$-dimensional closed subscheme of $\length Z\geq 2$;
\item[$(4)$] $\mathcal{I}_Z(2)$, where $Z$ is a $0$-dimensional closed subscheme of $\length Z\geq 5$;
\item[$(5)$] $\mathcal{O}_L(-p)$, where $L$ is a line passing through a point $p$;
\item[$(6)$] $\mathcal{O}_C(-p)$, where $C$ is a conic passing through a point $p$.
\end{enumerate}
\end{lemma}
\begin{proof}
The first statement follows from the sequence (\ref{E_2^{-1,1}quotient}) 
since $\mathcal{E}$ is nef.
The second statement is almost obvious from the first, so that we only give a proof in case $(4)$.
If there exists a line $L$ such that $\length Z\cap L\geq 3$, then 
the double twist $\mathcal{I}_Z\cdot\mathcal{O}_L(2)$
of the inverse image ideal sheaf $\mathcal{I}_Z\cdot\mathcal{O}_L$
has 
negative degree, which contradicts the first statement.
If $\length Z\cap L\leq 2$ for any line $L$, then Lemma~\ref{conicpassing5points} below
shows that there exists a smooth conic $C$ such that $\length Z\cap C\geq 5$.
Hence 
$\mathcal{I}_Z\cdot\mathcal{O}_C(2)$
has 
negative degree, which contradicts the first statement again.
Therefore $E_2^{-1,1}$ can not admit $\mathcal{I}_Z(2)$ as a quotient if $\length Z\geq 5$.
\end{proof}
Lemma~\ref{very often} and the exact sequence~(\ref{exactseq}) indicate that there are some relations between 
$h^1(\mathcal{E}(-2))$ 
and $h^1(\mathcal{E}(-1))$;
we shall explore these relations in Lemma~\ref{key}.

\subsection{Set-up for the three-dimensional case}
The Riemann-Roch formula for a twisted vector bundle $\mathcal{E}(t)$ 
on $\mathbb{P}^3$
is
\[\chi(\mathcal{E}(t))=\frac{1}{2}\{9-3c_2+c_3+
(9-2c_2)(t+2)+(3t^2+12t+11)\}+\frac{r}{6}(t+3)(t+2)(t+1).\]
Recall that this formula is for $c_1=c_1(\mathcal{E})=3$.
By this formula,
we have 
\begin{gather}
\chi(\mathcal{E}(-3))=1-\frac{1}{2}(c_2-c_3),\label{RRonP3(-3)}\\
\chi(\mathcal{E}(-2))=4-\frac{1}{2}(3c_2-c_3),\label{RRonP3(-2)}\\
\chi(\mathcal{E}(-1))=10-\frac{1}{2}(5c_2-c_3).\label{RRonP3(-1)}
\end{gather}
By taking into account 
the vanishing~(\ref{first vanishing}), we also have 
\begin{gather}
h^0(\mathcal{E})
=19-\frac{1}{2}(7c_2-c_3)+r,\label{RRonP3(0)}\\
h^0(\mathcal{E}(1))
=31-\frac{1}{2}(9c_2-c_3)+4r.\label{RRonP3(1)}
\end{gather}
Recall here that 
$H(\mathcal{E})^{r+2}
=c_3-2c_1c_2+c_1^3$
(see \cite[\S 3.1 and \S 3.2]{fl}).
Since $c_1=3$ and 
$H(\mathcal{E})^{r+2}\geq 0$, 
this implies that 
\begin{equation}\label{selfintersection}
c_3\geq 6c_2-27.
\end{equation}

\section{Lemmas}\label{Lemmas}
In this section, we collect some lemmas applied 
in the proofs of Theorems~\ref{c_1=3c_2<8}
or \ref{globalgeneration}.
\begin{lemma}\label{zero locus of T(-1)}
Let $V$ be an $(n+1)$-dimensional vector space,
$(x_0,x_1,\dots,x_n)$ a basis of $V$,
and $(x_0^*,\dots,x_n^*)$ the basis of the dual vector space $V^*$
corresponding to $(x_0,x_1,\dots,x_n)$.
Suppose that a non-zero element $s$ of $H^0(T_{\mathbb{P}(V)}(-1))$
corresponds to $\sum_{i=0}^{n}a_ix_i^*$ for some $a_i\in K$ via the isomorphism
from $H^0(T_{\mathbb{P}(V)}(-1))$ to $V^*$.
Then the zero locus $(s)_0$ of $s$ is a (reduced) point $p$
whose homogeneous coordinates $(x_0(p)\colon\cdots\colon x_n(p))$ are $(a_0\colon a_1\colon \cdots\colon a_n)$.
\end{lemma}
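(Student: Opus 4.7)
The plan is to identify the section $s$ explicitly through the twisted Euler sequence, compute the zero locus set-theoretically in local coordinates, and then verify that the scheme structure is reduced by a top Chern class count.

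First, I would set up the twisted Euler sequence on $\mathbb{P}(V)=\proj S(V)$:
\[
0\to \mathcal{O}(-1)\to V^*\otimes\mathcal{O}\to T_{\mathbb{P}(V)}(-1)\to 0,
\]
obtained by dualizing and twisting the standard sequence $0\to \Omega_{\mathbb{P}(V)}\to V\otimes\mathcal{O}(-1)\to \mathcal{O}\to 0$. Taking global sections and using $H^0(\mathcal{O}(-1))=H^1(\mathcal{O}(-1))=0$ identifies $H^0(T_{\mathbb{P}(V)}(-1))$ with $V^*$, and under this identification an element $\phi\in V^*$ becomes the image in $T_{\mathbb{P}(V)}(-1)$ of the \emph{constant} section $\phi\in V^*\otimes\mathcal{O}$. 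Thus the section $s$ attached to $\sum a_i x_i^*$ vanishes at $p\in\mathbb{P}(V)$ if and only if $\sum a_i x_i^*$ lies in the image of the fiber $\mathcal{O}(-1)|_p\hookrightarrow V^*$.

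Next, I would identify that fiber. On the standard affine open $U_j=\{x_j\neq 0\}$, the line bundle $\mathcal{O}(-1)$ is trivialized by $x_j^{-1}$, and the inclusion $\mathcal{O}(-1)\hookrightarrow V^*\otimes\mathcal{O}$ sends the local generator $x_j^{-1}$ to $\sum_i (x_i/x_j)\, x_i^*$; this is the dual of the tautological quotient $V\otimes\mathcal{O}\twoheadrightarrow \mathcal{O}(1)$ that sends $x_i\mapsto x_i$. Consequently, for a point $p\in U_j$ with homogeneous coordinates $(x_0(p):\dots:x_n(p))$, the one-dimensional subspace $\mathcal{O}(-1)|_p\subset V^*$ is spanned by $\sum_i x_i(p)\, x_i^*$. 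The section $s$ vanishes at $p$ precisely when $\sum a_ix_i^*$ is proportional to $\sum x_i(p)x_i^*$, i.e.\ when $(a_0:\dots:a_n)=(x_0(p):\dots:x_n(p))$. This pins down the zero locus set-theoretically as the single point with coordinates $(a_0:\dots:a_n)$.

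Finally, for the scheme structure, I would compute the top Chern class of $T_{\mathbb{P}(V)}(-1)$. Using $c(T_{\mathbb{P}^n})=(1+h)^{n+1}$ and the formula $c_k(E\otimes L)=\sum_i\binom{r-i}{k-i}c_i(E)c_1(L)^{k-i}$ with $r=n$ and $c_1(\mathcal{O}(-1))=-h$, a direct manipulation gives
\[
c_n(T_{\mathbb{P}^n}(-1))=\sum_{i=0}^{n}\binom{n+1}{i}(-1)^{n-i}h^n=h^n,
\]
so the zero scheme $(s)_0$ of any nowhere-vanishing-in-codimension section has length $1$. Combined with the set-theoretic computation above, this forces $(s)_0$ to be the reduced point $(a_0:\dots:a_n)$. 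The only real bookkeeping issue is keeping track of Grothendieck's convention $\mathbb{P}(V)=\proj S(V)$ so that the identification of $\mathcal{O}(-1)|_p$ inside $V^*$ comes out with the expected coordinates; once the Euler sequence is set up carefully this is automatic.
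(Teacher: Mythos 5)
Your proof is correct and complete. For this lemma the paper itself gives no argument at all --- it simply cites \cite[Remark 2.3]{sato} --- so your write-up supplies an actual proof where the paper defers to a reference. Your route is the natural one: the twisted Euler sequence
\[
0\to \mathcal{O}(-1)\to V^*\otimes\mathcal{O}\to T_{\mathbb{P}(V)}(-1)\to 0
\]
identifies $H^0(T_{\mathbb{P}(V)}(-1))$ with $V^*$, the fiber of $\mathcal{O}(-1)$ at $p$ inside $V^*$ is spanned by $\sum_i x_i(p)\,x_i^*$, and hence $s$ vanishes at $p$ exactly when $(a_0:\dots:a_n)=(x_0(p):\dots:x_n(p))$; the length count via $c_n$ then forces reducedness. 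Two small remarks. First, the Chern class step can be shortened: the same Euler sequence gives $c(T_{\mathbb{P}(V)}(-1))=c(\mathcal{O}(-1))^{-1}=1+h+h^2+\cdots$, so $c_n=h^n$ without invoking the tensor-by-a-line-bundle formula. Second, your phrase ``nowhere-vanishing-in-codimension section'' should be tightened to ``section whose zero locus has the expected codimension $n$'' (equivalently, a regular section); that hypothesis is exactly what your set-theoretic computation establishes, and it is what licenses the conclusion $\length (s)_0=\deg c_n=1$. With those cosmetic fixes the argument stands on its own.
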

\begin{proof}
See \cite[Remark 2.3]{sato}.
\end{proof}
\begin{lemma}\label{BeilinsonGreat}
Let $i$ be any integer such that $1\leq i\leq n$.
For each non-zero morphism $\varphi:\Omega_{\mathbb{P}(V)}^i(i)\to \Omega_{\mathbb{P}(V)}^{i-1}(i-1)$,
there exists a unique element $s\in H^0(T_{\mathbb{P}(V)}(-1))$
such that $\varphi$ is the morphism appearing in the Koszul complex induced from $s$.
In particular, every non-zero morphism 
$\varphi:
\Omega_{\mathbb{P}^3}^2(2)\to \Omega_{\mathbb{P}^3}(1)$
determines a unique non-zero element $s\in H^0(T_{\mathbb{P}^3}(-1))$
whose corresponding Koszul complex is an exact sequence
\[
0\to \mathcal{O}(-1)\to \Omega_{\mathbb{P}^3}^2(2)\xrightarrow{\varphi} \Omega_{\mathbb{P}^3}(1)
\to \mathcal{I}_p
\to 0,
\]
where $\{p\}=(s)_0$.
\end{lemma}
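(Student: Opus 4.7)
The plan is to realize the assignment $s\mapsto\varphi_s$, where $\varphi_s$ denotes the $p$-th differential in the Koszul complex attached to $s$, as a $K$-linear isomorphism $\alpha\colon H^0(T_{\mathbb{P}(V)}(-1))\xrightarrow{\sim}\Hom(\Omega_{\mathbb{P}(V)}^p(p),\Omega_{\mathbb{P}(V)}^{p-1}(p-1))$, and then to unwind the Koszul complex explicitly when $n=3$ and $p=2$.

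First I will view a nonzero $s\in H^0(T_{\mathbb{P}(V)}(-1))=\Hom(\mathcal{O},T(-1))$ as a section of the rank $n$ bundle $T(-1)$; by Lemma~\ref{zero locus of T(-1)} its zero scheme is a reduced point, hence of the expected codimension $n$, so $s$ is a regular section and the associated Koszul complex
\[
0\to\Omega_{\mathbb{P}(V)}^n(n)\to\Omega_{\mathbb{P}(V)}^{n-1}(n-1)\to\cdots\to\Omega_{\mathbb{P}(V)}(1)\to\mathcal{O}\to\mathcal{O}_{(s)_0}\to 0
\]
is exact, with differentials given by contraction with $s$. The $p$-th such differential is the morphism $\varphi_s:\Omega^p(p)\to\Omega^{p-1}(p-1)$ attached to $s$. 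The rule $s\mapsto\varphi_s$ is manifestly $K$-linear, and it is injective because if $\varphi_s$ were zero for some $1\le p\le n$ with $s\ne 0$, the displayed complex could not be exact at $\Omega^{p-1}(p-1)$.

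Surjectivity of $\alpha$ will follow from a dimension count, namely $\hom(\Omega^p(p),\Omega^{p-1}(p-1))=n+1=\dim V^*$. This can be extracted either from the $p$-th exterior power of the Euler sequence $0\to\Omega^p(p)\to\bigwedge^pV^*\otimes\mathcal{O}\to\Omega^{p-1}(p)\to 0$ after applying $\Hom(-,\Omega^{p-1}(p-1))$ and invoking Bott's formula, or more efficiently from the standard Beilinson/Bondal description of the endomorphism algebra of $\bigoplus_j\Omega^j(j)$ recalled in \S\ref{Preliminaries}; either way $\alpha$ becomes an injective linear map between vector spaces of equal dimension, and hence an isomorphism.

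With $\alpha$ an isomorphism, the second assertion reduces to a direct unwinding: given a nonzero $\varphi:\Omega^2(2)\to\Omega(1)$ on $\mathbb{P}^3$, set $s=\alpha^{-1}(\varphi)$, use $\Omega^3(3)\cong\mathcal{O}(-1)$, and splice the exact Koszul complex of $s$ at the differential $\Omega(1)\to\mathcal{O}$ using $\ker(\mathcal{O}\to k(p))=\mathcal{I}_p$, where $\{p\}=(s)_0$ is the reduced point furnished by Lemma~\ref{zero locus of T(-1)}; this produces the stated four-term exact sequence. The only mildly technical step is the $\hom$-space computation that establishes surjectivity of $\alpha$; everything else amounts to matching Koszul differentials with the asserted morphisms.
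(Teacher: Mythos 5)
Your proposal is correct and follows essentially the same route as the paper: both identify $H^0(T_{\mathbb{P}(V)}(-1))$ with $\Hom(\Omega_{\mathbb{P}(V)}^p(p),\Omega_{\mathbb{P}(V)}^{p-1}(p-1))$ via the Koszul contraction map and then use the regularity of a non-zero section (Lemma~\ref{zero locus of T(-1)}) to splice the exact Koszul complex into the stated four-term sequence. The only difference is that the paper simply cites Beilinson's Lemma~2 for the isomorphism of these two $(n+1)$-dimensional spaces, whereas you re-derive it by injectivity plus a dimension count; that is a harmless elaboration, not a different argument.
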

\begin{proof}
We have an isomorphism $H^0(T_{\mathbb{P}(V)}(-1))\cong \Hom(\Omega_{\mathbb{P}(V)}^i(i), \Omega_{\mathbb{P}(V)}^{i-1}(i-1))$
by \cite[Lemma 2]{MR0509388}, and this isomorphism indeed sends a global section $s$ of $T_{\mathbb{P}(V)}(-1)$ to 
the morphism $\varphi:\Omega_{\mathbb{P}(V)}^i(i)\to \Omega_{\mathbb{P}(V)}^{i-1}(i-1)$ appearing in the Koszul complex associated to $s$.
Note that a non-zero section $s$ is regular
by Lemma~\ref{zero locus of T(-1)}, so that the corresponding Koszul complex is exact.
\end{proof}
\begin{lemma}\label{DhaO}
Let $W$ be a $0$-dimensional closed subscheme of $\mathbb{P}^2$
and $w$ a point in $\mathbb{P}^2$.
Suppose that we have the following non-split exact sequence of coherent sheaves
\[
0\to \mathcal{I}_W(d)\to \mathcal{D}\to k(w)\to 0,
\]
where $d$ is an integer
and $k(w)$  denotes the residue field of the point $w$.
Then $w$ is an associated point of $W$
and $\mathcal{D}$ is isomorphic to $\mathcal{I}_{Z}(d)$
where $Z$ is a closed subscheme of $W$ with $\length Z=\length W-1$.
\end{lemma}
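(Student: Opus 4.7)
The plan is to exhibit $\mathcal{D}$ as an ideal sheaf of the form $\mathcal{I}_{W'}(d)$ by constructing an injection $\phi\colon \mathcal{D}\hookrightarrow \mathcal{O}(d)$ that extends the tautological inclusion $\mathcal{I}_W(d)\hookrightarrow \mathcal{O}(d)$; once this is in place, both the length statement and the fact that $w$ is an associated point of $W$ will drop out of a single snake-lemma diagram.

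First I would apply $\Hom(-,\mathcal{O}(d))$ to the given short exact sequence. Since $\mathcal{O}(d)$ is locally free of rank one on the smooth surface $\mathbb{P}^2$, its local rings have depth two, so $\calext^q(k(w),\mathcal{O}(d))=0$ for $q\leq 1$. Combined with $\calhom(k(w),\mathcal{O}(d))=0$ (the target being torsion-free), the local-to-global spectral sequence yields $\Ext^i(k(w),\mathcal{O}(d))=0$ for $i=0,1$. Hence the restriction map $\Hom(\mathcal{D},\mathcal{O}(d))\to \Hom(\mathcal{I}_W(d),\mathcal{O}(d))$ is an isomorphism, and there is a unique $\phi\colon \mathcal{D}\to \mathcal{O}(d)$ whose restriction to $\mathcal{I}_W(d)$ is the tautological inclusion.

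Next I would check that $\phi$ is injective. The kernel $\mathcal{K}:=\Ker \phi$ meets $\mathcal{I}_W(d)$ trivially because $\phi|_{\mathcal{I}_W(d)}$ is an embedding; consequently $\mathcal{K}$ injects into $\mathcal{D}/\mathcal{I}_W(d)\cong k(w)$, so $\mathcal{K}$ is either $0$ or isomorphic to $k(w)$. In the latter case the composition $\mathcal{K}\hookrightarrow \mathcal{D}\twoheadrightarrow k(w)$ is non-zero (otherwise $\mathcal{K}$ would embed into the torsion-free sheaf $\mathcal{I}_W(d)$) hence an isomorphism, which would split the original extension and contradict the non-split assumption. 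Therefore $\phi$ is injective and $\mathcal{D}(-d)$ is an ideal sheaf $\mathcal{I}_{W'}$ for a unique closed subscheme $W'\subseteq \mathbb{P}^2$. Since $\mathcal{I}_W(d)\subseteq \mathcal{D}$, one has $W'\subseteq W$.

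Finally, taking quotients along the chain $\mathcal{I}_W(d)\subseteq \mathcal{D}\subseteq \mathcal{O}(d)$ (equivalently, the snake lemma) yields the short exact sequence
\[
0\to k(w)\to \mathcal{O}_W\to \mathcal{O}_{W'}\to 0,
\]
from which both conclusions follow at once: the injection $k(w)\hookrightarrow \mathcal{O}_W$ exhibits $w$ as an associated point of the zero-dimensional scheme $W$, and additivity of length gives $\length W'=\length W-1$. The hard part, such as it is, is simply to justify the two Ext vanishings used to construct $\phi$ and to verify that a nonzero kernel of $\phi$ would indeed force a splitting; once these are in hand the remainder is a diagram chase.
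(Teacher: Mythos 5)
Your proof is correct, but it takes a genuinely different route from the paper's. The paper argues in three quick steps: since the extension does not split, $w$ is not an associated point of $\mathcal{D}$, so $\mathcal{D}$ is torsion-free of rank one; its double dual is then a line bundle agreeing with $\mathcal{I}_W(d)^{\vee\vee}\cong\mathcal{O}(d)$ (the two sheaves differ only on a zero-dimensional set), and the natural injection $\mathcal{D}\hookrightarrow\mathcal{D}^{\vee\vee}\cong\mathcal{O}(d)$ exhibits $\mathcal{D}$ as $\mathcal{I}_{W'}(d)$. You instead build the embedding $\phi\colon\mathcal{D}\hookrightarrow\mathcal{O}(d)$ directly, by using the vanishing of $\Ext^i(k(w),\mathcal{O}(d))$ for $i=0,1$ (codimension two support on a smooth surface) to extend the tautological inclusion of $\mathcal{I}_W(d)$ uniquely to $\mathcal{D}$, and then checking injectivity by hand. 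The non-splitness hypothesis enters both arguments in essentially the same way --- a copy of $k(w)$ inside $\mathcal{D}$ mapping isomorphically onto the quotient would split the sequence --- but where the paper invokes the reflexive-hull machinery for rank-one sheaves on surfaces, you make everything explicit with an $\calext$ computation and a snake-lemma diagram; the resulting sequence $0\to k(w)\to\mathcal{O}_W\to\mathcal{O}_{W'}\to 0$ delivers the inclusion $W'\subseteq W$, the length drop, and the fact that $w$ is an associated point of $W$ all at once, which is arguably cleaner than the paper's terse final sentence. The cost is length; the benefit is that every step is elementary and self-contained.
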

\begin{proof}
First note that $w$ is not an associated point of $\mathcal{D}$,
since the exact sequence above does not split.
Hence $\mathcal{D}$ is a torsion-free sheaf of rank one, and its double dual is isomorphic to $\mathcal{O}(d)$.
Therefore $\mathcal{D}$ is isomorphic to $\mathcal{I}_{Z}(d)$,
where $Z$ is a closed subscheme of $W$ with $\length Z=\length W-1$,
and $w$ is an associated point of $W$.
\end{proof}
\begin{lemma}\label{conicpassing5points}
Let $Z$ be a $0$-dimensional closed subscheme of length 
$5$
in $\mathbb{P}^2$,
and suppose that $\length(Z\cap L)\leq 2$ for any line $L$ in $\mathbb{P}^2$.
Then there exists a smooth conic $C$ 
passing through $Z$.
\end{lemma}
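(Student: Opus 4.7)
The plan is to produce a conic through $Z$ via a linear-systems dimension count and then invoke the hypothesis $\length(Z\cap L)\leq 2$ to exclude singular conics.

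First I would observe that, since $Z$ is zero-dimensional, the restriction sequence $0\to\mathcal{I}_Z(2)\to\mathcal{O}_{\mathbb{P}^2}(2)\to\mathcal{O}_Z\to 0$ together with $h^0(\mathcal{O}_{\mathbb{P}^2}(2))=6$ yields the estimate $h^0(\mathcal{I}_Z(2))\geq 6-\length Z$, which is $\geq 1$ for $\length Z=5$; hence at least one conic $Q\subset\mathbb{P}^2$ contains $Z$.

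Next I would argue by contradiction that $Q$ cannot be singular. A singular plane conic is either a union $L_1\cup L_2$ of two distinct lines or a double line $2L$. Treat the first case: let $\ell_i$ be a linear form defining $L_i$, so $\ell_1\ell_2\in\mathcal{I}_Z$. Then multiplication by $\ell_2$ on $\mathcal{O}_Z$ lands in the kernel of multiplication by $\ell_1$, producing a surjection $\mathcal{O}_{Z\cap L_2}=\mathcal{O}_Z/\ell_2\mathcal{O}_Z\twoheadrightarrow \ell_1\mathcal{O}_Z$. Combining this with the short exact sequence $0\to\ell_1\mathcal{O}_Z\to\mathcal{O}_Z\to\mathcal{O}_{Z\cap L_1}\to 0$ and taking lengths yields
\[
\length Z\;\leq\;\length(Z\cap L_1)+\length(Z\cap L_2)\;\leq\;2+2\;=\;4,
\]
contradicting $\length Z\geq 5$. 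The double-line case $Q=2L$ is handled identically by setting $\ell_1=\ell_2=\ell$, giving $\length Z\leq 2\length(Z\cap L)\leq 4$, again a contradiction. Hence $Q$ is smooth.

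The main delicacy is ensuring the length bound in the singular-conic case is truly scheme-theoretic: one must verify $\mathrm{Ann}_{\mathcal{O}_Z}(\ell_1)\supseteq \ell_2\mathcal{O}_Z$ (which is immediate from $\ell_1\ell_2\in\mathcal{I}_Z$) and thereby correctly account for any nilpotent structure of $Z$ concentrated at the singular point of $Q$, where the two components of $Q$ meet and the naive set-theoretic argument would undercount. Once this bookkeeping is settled, the combination of the two steps exhibits a smooth conic through $Z$.
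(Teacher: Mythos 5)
Your core argument is sound and, for $\length Z=5$, gives a complete and correct proof: the scheme-theoretic point you single out as the main delicacy is handled properly, since $\ell_1\ell_2\in\mathcal{I}_Z$ does give $\ell_2\mathcal{O}_Z\subseteq\mathrm{Ann}_{\mathcal{O}_Z}(\ell_1)$, hence the surjection $\mathcal{O}_{Z\cap L_2}=\mathcal{O}_Z/\ell_2\mathcal{O}_Z\twoheadrightarrow\ell_1\mathcal{O}_Z$ and the additivity of lengths in $0\to\ell_1\mathcal{O}_Z\to\mathcal{O}_Z\to\mathcal{O}_{Z\cap L_1}\to 0$ that you need; the double-line case is likewise correct. (The paper only cites an external reference for this lemma, but the argument there is the same dimension count followed by exclusion of the two singular types of conic, so your route is essentially the intended one.)

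The gap is in the first step when $\length Z\geq 6$. Your estimate $h^0(\mathcal{I}_Z(2))\geq 6-\length Z$ is then vacuous, and in fact no conic need contain $Z$ at all: six general points satisfy $\length(Z\cap L)\leq 2$ for every line $L$ but lie on no conic. So the statement taken literally cannot be established for $\length Z\geq 6$; what is actually needed in the applications (namely that $\mathcal{I}_Z(d)$ restricts to a negative-degree line bundle on some smooth conic) is only that a smooth conic meets $Z$ in length at least $5$. To get that, first replace $Z$ by a closed subscheme $Z'\subseteq Z$ of length exactly $5$ --- such a subscheme always exists, for instance by refining the filtration of the Artinian ring $\mathcal{O}_Z$ by powers of its radical to a composition series, and it still satisfies $\length(Z'\cap L)\leq\length(Z\cap L)\leq 2$ because $\mathcal{O}_{Z'\cap L}$ is a quotient of $\mathcal{O}_{Z\cap L}$ --- and then run your argument verbatim on $Z'$. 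As written, your proof covers only the case $\length Z=5$ and silently drops the rest of the stated range.
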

\begin{proof}
See, e.g., the second paragraph of page 75 of \cite{Nefofc1=3c2=8OnPN}.
\end{proof}

\section{Key Lemma
}\label{KeyLemma}
The following lemma together with the exact sequence (\ref{E_2^{-1,1}quotient})
plays a crucial role in the proofs of Theorems~\ref{c_1=3c_2<8} and \ref{globalgeneration}.
\begin{lemma}\label{key}
Let 
\[
0\to E_2^{-2,1}\to \mathcal{O}(-1)^{\oplus a} 
\xrightarrow{\mu} \Omega_{\mathbb{P}^2}(1)^{\oplus b}
\to E_2^{-1,1}\to 0
\]
be the exact sequence 
(\ref{exactseq}) on $\mathbb{P}^2$,
where $a=h^1(\mathcal{E}(-2))$ and  $b=h^1(\mathcal{E}(-1))$.
\begin{enumerate}
\item[$(1)$] If $b\geq 1$, consider a 
surjection
$\pi:\Omega_{\mathbb{P}^2}(1)^{\oplus b}\to \Omega_{\mathbb{P}^2}(1)$,
and 
let 
\[\varphi:
\mathcal{O}(-1)^{\oplus a} 
\to \Omega_{\mathbb{P}^2}(1)
\] 
be the composite of $\mu$ and $\pi$.
Then $H^0(\varphi(1)):H^0(\mathcal{O}^{\oplus a})\to H^0(\Omega_{\mathbb{P}^2}(2))$ is surjective;
consequently 
$a\geq 3$ and 
$\varphi$ is surjective.
Moreover we have an exact sequence
\[
0\to E_2^{-2,1}\to \mathcal{O}(-2)\oplus \mathcal{O}(-1)^{\oplus a-3} 
\xrightarrow{\mu_1} \Omega_{\mathbb{P}^2}(1)^{\oplus b-1}
\to E_2^{-1,1}\to 0.
\]
\item[$(2)$] 
If $b\geq 2$, 
consider a surjection
$q:\Omega_{\mathbb{P}^2}(1)^{\oplus b-1}\to \Omega_{\mathbb{P}^2}(1)$,
and let 
\[
\varphi_1:
\mathcal{O}(-2)\oplus\mathcal{O}(-1)^{\oplus a-3} 
\to \Omega_{\mathbb{P}^2}(1)
\]
be the composite of $\mu_1$ and $q$.
Then the image of $H^0(\varphi_1(1))$ has dimension two or three;
consequently
$a\geq 5$.
\begin{enumerate}
\item[$(a)$] If $\dim \im H^0(\varphi_1(1))=2$, then we have the following exact sequence
\[
\begin{split}
0\to E_2^{-2,1}\to \mathcal{O}(-3)\oplus\mathcal{O}(-1)^{\oplus a-5}
&\xrightarrow{\nu_2}
\Omega_{\mathbb{P}^2}(1)^{\oplus b-2}\\
&\to E_2^{-1,1}\to k(w)\to 0
\end{split}
\]
for some point $w$ in $\mathbb{P}^2$,
where $k(w)$ denotes the residue field of $w$.
\item[$(b)$] If $\dim \im H^0(\varphi_1(1))=3$, then $a\geq 6$ and we have the following exact sequence
\[0\to E_2^{-2,1}\to \mathcal{O}(-2)^{\oplus 2}\oplus\mathcal{O}(-1)^{\oplus a-6}
\xrightarrow{\mu_2}
\Omega_{\mathbb{P}^2}(1)^{\oplus b-2}
\to E_2^{-1,1}\to 0.
\]
\end{enumerate}
\end{enumerate}
\end{lemma}
\begin{proof}
(1) First note that there exists a surjection $E_2^{-1,1}\to \Coker\varphi$.

If $H^0(\varphi(1))=0$, then $\varphi=0$ and $\Coker\varphi\cong \Omega_{\mathbb{P}^2}(1)$.
This however contradicts Lemma~\ref{very often}.
Therefore $H^0(\varphi(1))\neq 0$.

Suppose that the image of $H^0(\varphi(1))$ has dimension one,
and let $s$ be a non-zero element in the image in $H^0(\Omega_{\mathbb{P}^2}(2))\cong H^0(T_{\mathbb{P}^2}(-1))$.
Then the zero locus of $s$ is a (reduced) point $p$ by Lemma~\ref{zero locus of T(-1)}
and $\Coker\varphi$ is isomorphic to the ideal sheaf $\mathcal{I}_p$
of $p$,
but this can not occur by Lemma~\ref{very often}.
Therefore the image of $H^0(\varphi(1))$ has dimension $\geq 2$.

Suppose that the image of $H^0(\varphi(1))$ has dimension two,
and let $(s, t)$ be a basis of the image.
As above, let $p$ be the zero locus of $s$.
Then $t$ induces an injection $\mathcal{O}\to \mathcal{I}_p(1)$,
and this gives a line $L$ passing through $p$.
The quotient of the $\mathcal{O}(-1)$-twist of this injection is $\mathcal{O}_{L}(-p)$ and is isomorphic to $\Coker\varphi$.
However this is impossible by Lemma~\ref{very often}.
Therefore the image of $H^0(\varphi(1))$ has dimension $\geq 3$, i.e., $H^0(\varphi(1))$ is surjective.
Hence $\varphi$ is surjective and $\Ker \varphi\cong \mathcal{O}(-2)\oplus\mathcal{O}(-1)^{\oplus a-3}$.
Now $\Ker \pi\cong \Omega_{\mathbb{P}^2}(1)^{\oplus b-1}$, and the morphism $\mu_1:\Ker \varphi\to \Ker \pi$
induced by $\mu$ 
extends to the exact sequence in the statement.

(2) Note first that there exists a surjection $E_2^{-1,1}\to \Coker\varphi_1$.
Denote by $\iota$ the inclusion $\mathcal{O}(-1)^{\oplus a-3}\to \mathcal{O}(-2)\oplus \mathcal{O}(-1)^{\oplus a-3}$,
and consider the composite $\varphi_1\circ \iota:\mathcal{O}(-1)^{\oplus a-3}\to \Omega_{\mathbb{P}^2}(1)$.
We see that $\varphi_1$ induces a morphism 
$\bar{\varphi}_1:
\Coker(\iota)
\to \Coker(\varphi_1\circ \iota)$
and that $\Coker(\iota)\cong \mathcal{O}(-2)$.
Moreover we have the following long exact sequence 
\begin{equation}\label{long exact in key lemma}
0\to \Ker(\varphi_1\circ\iota)\to \Ker\varphi_1\to \mathcal{O}(-2)\xrightarrow{\bar{\varphi}_1}
\Coker(\varphi_1\circ\iota)\to \Coker\varphi_1\to 0
\end{equation}
by the snake lemma.
In particular,
$\Coker \varphi_1\cong \Coker \bar{\varphi}_1$.

If $H^0(\varphi_1(1))=0$, then 
$\varphi_1\circ\iota=0$ and thus $\Coker(\varphi_1\circ \iota)\cong \Omega_{\mathbb{P}^2}(1)$.
If $\bar{\varphi}_1=0$, then 
$\Coker
\varphi_1
\cong 
\Coker(\varphi_1\circ \iota)
$.
Hence 
$E_2^{-1,1}$ has $\Omega_{\mathbb{P}^2}(1)$ as a quotient,
which 
is absurd by Lemma~\ref{very often}.
Therefore
$\bar{\varphi}_1
:\mathcal{O}(-2)\to \Omega_{\mathbb{P}^2}(1)$
is a non-zero morphism. 
Suppose that 
$\bar{\varphi}_1$ 
is decomposed as 
$\bar{\varphi}_1=sl$, 
where 
$l$ is 
an inclusion $\mathcal{O}(-2)\to\mathcal{O}(-1)$
and 
$s$ is 
a non-zero
morphism $\mathcal{O}(-1)\to \Omega_{\mathbb{P}^2}(1)$.
The cokernel of  the inclusion $l:\mathcal{O}(-2)\to\mathcal{O}(-1)$
is isomorphic to $\mathcal{O}_{L}(-1)$, where $L$ is the line defined by $l$,
and the cokernel of the morphism $s:\mathcal{O}(-1)\to \Omega_{\mathbb{P}^2}(1)$
is isomorphic to $\mathcal{I}_p$, where 
$p$ is a point;
thus $\Coker\bar{\varphi}_1$ fits in an exact sequence
\[0\to \mathcal{O}_{L}(-1)\to \Coker\bar{\varphi}_1\to \mathcal{I}_p\to 0.\]
Therefore
$\Coker\varphi_1$ (and hence $E_2^{-1,1}$) has $\mathcal{I}_p$ as a quotient,
which can not happen by Lemma~\ref{very often}.
Hence 
$\bar{\varphi}_1$
does not factor 
as 
$\bar{\varphi}_1=sl$.
Let $\bar{s}$ be the non-zero section of $H^0(\Omega_{\mathbb{P}^2}(3))$ 
determined by $\bar{\varphi}_1$.
Then $\bar{s}$ is a regular section of $\Omega_{\mathbb{P}^2}(3)\cong T_{\mathbb{P}^2}$,
i.e., the zero locus $Z$ of $\bar{s}$ has dimension $\leq 0$.
Since $c_2(T_{\mathbb{P}^2})=3$, 
$Z$ 
is thereby a $0$-dimensional closed subscheme of length three.
Note that 
$\Coker\bar{\varphi}_1
\cong 
\mathcal{I}_Z(1)$;
this however contradicts Lemma~\ref{very often}.
Therefore $H^0(\varphi_1(1))\neq 0$.

Suppose that the image of $H^0(\varphi_1(1))$ has dimension one.
Then $\Coker(\varphi_1\circ\iota)$ is isomorphic to the ideal sheaf $\mathcal{I}_p$
of some point $p$, as is shown in the proof of (1).
If $\bar{\varphi}_1=0$, then 
$\Coker\bar{\varphi}_1\cong \mathcal{I}_p$,
which is absurd by Lemma~\ref{very often}.
Hence $\bar{\varphi}_1:\mathcal{O}(-2)\to \mathcal{I}_p$ is non-zero.
Then $\Coker\bar{\varphi}_1$ is isomorphic to $\mathcal{O}_C(-p)$
for some conic $C$ on $\mathbb{P}^2$,
which again
contradicts Lemma~\ref{very often}.
Therefore the image of $H^0(\varphi_1(1))$ has dimension at least two.

(2) $(a)$ 
Suppose that 
$\dim \im H^0(\varphi_1(1))=2$.
Then $\Coker(\varphi_1\circ\iota)
\cong 
\mathcal{O}_{L}(-p)$
for some line $L$ in $\mathbb{P}^2$ and some point $p$ in $L$, as is shown in the proof of (1).
If $\bar{\varphi}_1=0$, then 
$\Coker\bar{\varphi}_1\cong \mathcal{O}_{L}(-p)$,
which can not happen by Lemma~\ref{very often}.
Hence $\bar{\varphi}_1:\mathcal{O}(-2)\to \mathcal{O}_{L}(-p)$ is non-zero.
Then $\Coker\bar{\varphi}_1\cong k(w)$
for some point $w$ on $\mathbb{P}^2$,
and $\Ker \bar{\varphi}_1\cong \mathcal{O}(-3)$.
The long exact sequence~(\ref{long exact in key lemma}) induces an exact sequence
\[0\to \Ker(\varphi_1\circ\iota)\to \Ker\varphi_1\to \Ker\bar{\varphi}_1\to 0.\]
Note here that
$\Ker \varphi_1\circ\iota\cong \mathcal{O}(-1)^{\oplus a-5}$;
the above sequence then implies that
$\Ker \varphi_1\cong \mathcal{O}(-3)\oplus \mathcal{O}(-1)^{\oplus a-5}$.
Now we get the exact sequence in the statement by the snake lemma.

(2) $(b)$
Suppose that 
$\dim \im H^0(\varphi_1(1))\geq 3$.
Then $H^0(\varphi_1(1))$ is surjective,
and thus $\varphi_1\circ\iota$ is surjective.
Hence $\Ker \varphi_1\circ\iota\cong \mathcal{O}(-2)\oplus \mathcal{O}(-1)^{\oplus a-6}$.
Then the long exact sequence~(\ref{long exact in key lemma}) gives an exact sequence
\[0\to \mathcal{O}(-2)\oplus \mathcal{O}(-1)^{\oplus a-6}\to \Ker \varphi_1\to \mathcal{O}(-2)\to 0.\]
Therefore 
$\Ker \varphi_1\cong \mathcal{O}(-2)^{\oplus 2}\oplus \mathcal{O}(-1)^{\oplus a-6}$.
Now $\Ker q\cong \Omega_{\mathbb{P}^2}(1)^{\oplus b-2}$, and $\mu_1$ induces a morphism $\Ker\varphi_1\to \Ker q$, which extends 
to the exact sequence in the statement.
\end{proof}

\section{Global generation of $\mathcal{E}$ in case $c_2\leq 7$}\label{shortcut}
\begin{thm}\label{globalgeneration}
Let $\mathcal{E}$ be a nef vector bundle of rank $r$ on a projective space $\mathbb{P}^n$
with first Chern class $c_1=3$ and second Chern class $c_2\leq 7$.
Then $\mathcal{E}$ is globally generated.
\end{thm}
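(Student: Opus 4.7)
The plan is to argue by induction on $n$, with the base case $n = 2$ handled by the Bondal spectral sequence together with Lemma~\ref{key}, and higher $n$ reduced by hyperplane restriction. By the set-up in \S\ref{SetUp}, after discarding the cases $\Hom(\mathcal{O}(2),\mathcal{E}) \neq 0$ (which land in (1), (2), or (5) of Theorem~\ref{c_1=3c_2<8} and are visibly globally generated), I may assume $H^0(\mathcal{E}(-2)) = 0$ and $3 \leq c_2 \leq 7$.

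For the base case $n = 2$, I would feed the sequence (\ref{exactseq}) into Lemma~\ref{key} with $a = h^1(\mathcal{E}(-2)) = c_2 - 3 \leq 4$ and $b = h^1(\mathcal{E}(-1))$. Part~(2) of the lemma forces $a \geq 5$ whenever $b \geq 2$, so the hypothesis $c_2 \leq 7$ gives $b \leq 1$. If $b = 0$, (\ref{exactseq}) degenerates to $0 \to E_2^{-2,1} \to \mathcal{O}(-1)^{\oplus a} \to 0 \to E_2^{-1,1} \to 0$ and immediately produces $E_2^{-1,1} = 0$. If $b = 1$, applying Lemma~\ref{key}(1) yields a new exact sequence whose target $\Omega_{\mathbb{P}^2}(1)^{\oplus b-1}$ is zero, and again $E_2^{-1,1} = 0$. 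In both cases, (\ref{E_2^{-1,1}quotient}) collapses to $\mathcal{E} \cong E_3^{0,0}$, a quotient of $E_2^{0,0}$; and by (\ref{E_2^00 exact sequence in dim 2}), $E_2^{0,0}$ is itself a quotient of the globally generated sheaf $\mathcal{O}(1)^{\oplus e_{0,1}} \oplus \mathcal{O}^{\oplus e_{0,0}}$. Hence $\mathcal{E}$ is a quotient of a globally generated sheaf, so globally generated.

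For the inductive step $n \geq 3$, the hypothesis makes $\mathcal{E}|_H$ globally generated on every hyperplane $H \cong \mathbb{P}^{n-1}$. The restriction sequence $0 \to \mathcal{E}(-1) \to \mathcal{E} \to \mathcal{E}|_H \to 0$ then reduces the problem to showing $H^1(\mathcal{E}(-1)) = 0$, for then $H^0(\mathcal{E}) \twoheadrightarrow H^0(\mathcal{E}|_H)$ and global generation of $\mathcal{E}|_H$ lifts to global generation of $\mathcal{E}$ at every point of $H$; letting $H$ vary covers $\mathbb{P}^n$. For $n \geq 4$, the vanishing $H^1(\mathcal{E}(-1)) = 0$ is immediate from (\ref{first vanishing}) with $k = 4$. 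For $n = 3$, I would combine Riemann--Roch (\ref{RRonP3(-1)}), the upper-cohomology vanishing (\ref{H2ijouvanishingforE(-1)onP3}), the Chern-class inequalities (\ref{selfintersection}) and (\ref{topnonnegative}), and the planar bound $h^1(\mathcal{E}|_H(-1)) \leq 1$ obtained in the base case, chased through the long exact sequence coming from $0 \to \mathcal{E}(-2) \to \mathcal{E}(-1) \to \mathcal{E}|_H(-1) \to 0$, to eliminate the remaining contributions to $h^1(\mathcal{E}(-1))$.

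The main obstacle is precisely the $n = 3$ subcase: the automatic Kodaira-type vanishings from (\ref{first vanishing}) stop exactly one step short of delivering $H^1(\mathcal{E}(-1)) = 0$, so the required vanishing must be squeezed from a numerical argument in which the potential contribution $h^1(\mathcal{E}|_H(-1)) = 1$ has to be absorbed via (\ref{selfintersection}) and Riemann--Roch. The base case itself is structurally clean once one notices that $a \leq 4 < 5$ rules out $b \geq 2$ via Lemma~\ref{key}(2), and that the lemma forces $E_2^{-1,1}$ to vanish even in the borderline case $b = 1$.
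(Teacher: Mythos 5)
Your base case $n=2$ is correct and matches the paper's argument: Lemma~\ref{key}(2) forces $h^1(\mathcal{E}(-1))\leq 1$ when $h^1(\mathcal{E}(-2))=c_2-3\leq 4$, and in either remaining case $E_2^{-1,1}=0$, so $\mathcal{E}\cong E_3^{0,0}$ is a quotient of the globally generated $E_2^{0,0}$. (The paper dispatches $h^1(\mathcal{E}(-1))=0$ by $0$-regularity instead, but your spectral-sequence route is equivalent.) The $n\geq 4$ step is likewise fine.

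The genuine gap is in the $n=3$ step, and it is exactly where you placed your confidence in ``a numerical argument.'' Following your own outline: if $h^1(\mathcal{E}(-1))\neq 0$, then $H(\mathcal{E})$ is not big, so (\ref{selfintersection}) becomes the equality $c_3=6c_2-27$; combined with (\ref{topnonnegative}) and the parity constraint from (\ref{RRonP3(-1)}) this leaves $(c_2,c_3)=(7,15)$ or $(5,3)$. The case $(7,15)$ does fall to the numerical chase you describe: one gets $h^0(\mathcal{E}(-1))=0$, all higher cohomology of $\mathcal{E}(-1)$ vanishes, and $\chi(\mathcal{E}(-1))=0$ forces $h^1(\mathcal{E}(-1))=0$, a contradiction. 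But for $(c_2,c_3)=(5,3)$ one has $\chi(\mathcal{E}(-1))=-1$ with $h^q(\mathcal{E}(-1))=0$ for $q\geq 2$, and both $(h^0(\mathcal{E}(-1)),h^1(\mathcal{E}(-1)))=(0,1)$ and $(1,2)$ are consistent with every numerical input you list (Riemann--Roch, (\ref{H2ijouvanishingforE(-1)onP3}), (\ref{selfintersection}), (\ref{topnonnegative}), and the planar bound). No amount of Euler-characteristic bookkeeping eliminates this case. The paper needs all of \S\ref{c2=5case} to rule it out: it runs the Bondal spectral sequence on $\mathbb{P}^3$ (for $\mathcal{E}$ or for the quotient $\mathcal{F}=\mathcal{E}/\mathcal{O}(1)$ depending on the subcase), uses Lemma~\ref{BeilinsonGreat} to write the map $\Omega_{\mathbb{P}^3}^2(2)^{\oplus 2}\to\Omega_{\mathbb{P}^3}(1)^{\oplus i}$ in explicit Koszul matrix form, identifies the resulting cokernel ($\mathcal{O}_L(-1)$ on a line, or a sheaf supported on a quadric), and in the subcase $(0,1)$ invokes the classification of globally generated rank-three bundles with $c_3=0$ from \cite[Proposition 2.4]{MR3019571} to reach a contradiction with $c_2=4$. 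This geometric input is the missing idea in your proposal; as written, your $n=3$ argument does not close.
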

\begin{proof}
Consider first the case $n=2$. Note that $\mathcal{E}$ is $0$-regular in the sense of Castelnuovo-Mumford
if $h^1(\mathcal{E}(-1))=0$ since we have the vanishing (\ref{H^2vanishing}).
Since $0$-regularity implies global generation, we may concentrate our attention to the case 
$h^1(\mathcal{E}(-1))\neq 0$.
We shall work in the setting in \S\ref{Set-up for the two-dimensional case}
to apply the Bondal spectral sequence (\ref{BondalSpectralSequence}).
Then it follows from Lemma~\ref{key} (1)
and formula (\ref{h1E-2onP2}) that $c_2\geq 6$.
Moreover the assumption $c_2\leq 7$ implies that 
\begin{equation}\label{n=2c2=67h1=1}
h^1(\mathcal{E}(-1))=1
\end{equation} 
by Lemma~\ref{key} (2).
Lemma~\ref{key} (1) 
then shows that $E_2^{-1,1}=0$. 
Therefore $E_3^{0,0}\cong \mathcal{E}$ by the exact sequence (\ref{E_2^{-1,1}quotient}).
Note that $E_3^{0,0}$ is a quotient of $E_2^{0,0}$ since $E_2^{p,q}=0$ if $p>0$.
Since $E_2^{0,0}$ is globally generated by (\ref{E_2^00 exact sequence in dim 2}),
this shows that $\mathcal{E}$ is globally generated if $n=2$ and $c_2\leq 7$.

Consider next the case $n\geq 3$.
Recall here that $\mathcal{E}$ is globally generated 
if $h^1(\mathcal{E}(-1))=0$
and $\mathcal{E}|_H$ is globally generated for any hyperplane $H$ in $\mathbb{P}^n$
by \cite[Lemma 3]{pswnef}.

Suppose now that $n=3$. 
Since $\mathcal{E}|_H$ is globally generated for any plane $H$ 
as we have seen above,
we may concentrate our attention to the case $h^1(\mathcal{E}(-1))\neq 0$.
Then 
it follows from (\ref{KVvanishing in arbitrary dim})
that $H(\mathcal{E})$ is not big, and hence $c_3=6c_2-27$ by (\ref{selfintersection}).
Thus $c_3$ is odd. Hence $c_2$ is odd  by (\ref{RRonP3(-1)}).
Since $c_3$ is non-negative by (\ref{topnonnegative}), 
we see that $c_2\geq 5$ by equality $c_3=6c_2-27$;
thus $(c_2,c_3)=(5,3)$ or $(7,15)$. 
Suppose that $(c_2,c_3)=(7,15)$.
Then $h^1(\mathcal{E}|_H(-1))=1$ by (\ref{n=2c2=67h1=1}).
Hence $h^0(\mathcal{E}|_H(-1))=0$ by (\ref{RRonP2(-1)}).
Since $h^0(\mathcal{E}(-2))=0$ by (\ref{first assumption}),
this implies that $h^0(\mathcal{E}(-1))=0$.
Note here that $H^q(\mathcal{E})=0$ for all $q>0$
by 
(\ref{first vanishing})
and that $H^q(\mathcal{E}|_H)=0$ for all $q>0$
by (\ref{KVvanishing}).
Hence 
$H^q(\mathcal{E}(-1))=0$ unless $q=1$.
Thus 
\[-h^1(\mathcal{E}(-1))=\chi(\mathcal{E}(-1))=(c_3-15)/2=0\]
by 
(\ref{RRonP3(-1)}).
This contradicts the assumption $h^1(\mathcal{E}(-1))\neq 0$;
thus the case where $(c_2,c_3)=(7,15)$ is ruled out.
In \S \ref{c2=5case}, we shall show that the case $(c_2,c_3)=(5,3)$ does not happen.
Therefore $h^1(\mathcal{E}(-1))=0$ and $\mathcal{E}$ is globally generated.

If $n\geq 4$,
then $h^1(\mathcal{E}(-1))=0$ by (\ref{first vanishing})
and $\mathcal{E}|_H$ is globally generated for any hyperplane $H$ in $\mathbb{P}^n$
as we have seen above.
Therefore $\mathcal{E}$ is globally generated.
\end{proof}

\subsection{The case where $(n,c_2, c_3)=(3,5,3)$}\label{c2=5case}
We first claim that 
\begin{eqnarray}
H^q(\mathcal{E}|_{L^2}(-k))&=0&
\textrm{ for all }q>0
\textrm{ and all }k\leq 2
\textrm{ unless }
(q,k)=(1,2);\label{restrictionVanishing(n,c_2, c_3)=(3,5,3)}
\\
h^1(\mathcal{E}|_{L^2}(-2))&=2,& 
\end{eqnarray}
where $L^2$ denotes a plane in $\mathbb{P}^3$.
Indeed we have $h^1(\mathcal{E}|_{L^2}(-2))=2$
by (\ref{h1E-2onP2}),
and 
$H^q(\mathcal{E}|_{L^2}(-k))$ vanishes
for any 
$k\leq 0$ and 
any
$q>0$ by (\ref{KVvanishing}),
for $(k,q)=(1,2)$ and $(2,2)$ by (\ref{H^2vanishing}),
and for $(k,q)=(1,1)$ by Lemma~\ref{key} (1)
and (\ref{h1E-2onP2}).
Since $H^q(\mathcal{E}(-k))=0$ for all $q>0$ if  $k\leq 0$ by (\ref{first vanishing}),
the claims above imply that 
\begin{align}
H^q(\mathcal{E}(-k))&=0
\textrm{ for all }q\geq 2
\textrm{ and all }k\leq 3
\textrm{ unless }(q,k)=(2,3);\label{H2ijouvanishingincasen=3c2=5}\\
h^2(\mathcal{E}(-3))&\leq 2.\label{aleq2}
\end{align}
Set 
\begin{equation}\label{definitionOfa}
a=h^2(\mathcal{E}(-3)).
\end{equation}
Then 
\begin{equation}\label{h1E-3haA}
a=h^1(\mathcal{E}(-3))
\end{equation}
by (\ref{first assumption}),
(\ref{RRonP3(-3)}),
and (\ref{H2ijouvanishingincasen=3c2=5}).
It follows from 
(\ref{first assumption}), (\ref{RRonP3(-2)}), and 
(\ref{H2ijouvanishingincasen=3c2=5})
that 
\begin{equation}\label{c2=5c3=3h1E(-2)}
h^1(\mathcal{E}(-2))=2.
\end{equation}
We also have 
\[
\chi(\mathcal{E}(-1))
=-1
\]
by 
(\ref{RRonP3(-1)}).
By 
(\ref{RRonP3(0)}),
we have 
\begin{equation}\label{h0E(0)andE(1)incasec2=5c3=3}
h^0(\mathcal{E})=r+3.
\end{equation}
Note that there exists an
exact sequence
\[
0\to H^0(\mathcal{E}(-1))\to H^0(\mathcal{E}|_{L^2}(-1))\to H^1(\mathcal{E}(-2))\to H^1(\mathcal{E}(-1))\to 0
\]
and that $h^0(\mathcal{E}|_{L^2}(-1))=1$ by (\ref{RRonP2(-1)}) and (\ref{restrictionVanishing(n,c_2, c_3)=(3,5,3)}).
Therefore we have two cases, corresponding to $h^0(\mathcal{E}(-1))=0$ or $1$. If 
$h^0(\mathcal{E}(-1))=1$, then $h^1(\mathcal{E}(-1))=2$, and 
we shall consider the following exact sequence 
\begin{equation}\label{defofF}
0\to \mathcal{O}(1)\to \mathcal{E}\to \mathcal{F}\to 0
\end{equation}
of coherent sheaves on $\mathbb{P}^3$.
Note here that $\Ext^q(G,\mathcal{E})\cong \Ext^q(G,\mathcal{F})$ if $q>0$,
that $\Hom(\mathcal{O}(i),\mathcal{F})=0$ if $i\geq 2$,
that $\Hom(\mathcal{O}(1),\mathcal{F})=0$ by the assumption $h^0(\mathcal{E}(-1))=1$,
and that $\hom(\mathcal{O},\mathcal{F})=r-1$ by (\ref{h0E(0)andE(1)incasec2=5c3=3}).

\subsubsection{Set-up using the Bondal spectral sequence}\label{SetUpFor(n,c_2, c_3)=(3,5,3)}
Suppose that $(h^0(\mathcal{E}(-1)), h^1(\mathcal{E}(-1)))=(0,1)$ (resp.\ $(1,2)$).
We apply to $\mathcal{E}$ (resp.\ $\mathcal{F}$) the Bondal spectral sequence 
(\ref{BondalSpectralSequence}).
It follows from (\ref{H2ijouvanishingincasen=3c2=5}) that $E_2^{p,q}=0$ for all $p$ if $q\geq 3$.
We have $\Ext^2(G,\mathcal{E})\cong S_3^{\oplus a}$ by 
(\ref{H2ijouvanishingincasen=3c2=5}) and (\ref{definitionOfa}).
Hence $E_2^{p,2}=0$ unless $p=-3$, and $E_2^{-3,2}=\mathcal{O}(-1)^{\oplus a}$ by (\ref{Snpullback}).
It follows from (\ref{h1E-3haA}), 
(\ref{c2=5c3=3h1E(-2)}), (\ref{first vanishing}), and the assumption $h^1(\mathcal{E}(-1))=1$ (resp.\ $2$)
that the right $A$-module $\Ext^1(G,\mathcal{E})$ (resp.\ $\Ext^1(G,\mathcal{F})$) has the filtration which induces the following exact sequences
of right $A$-modules
\begin{gather*}
0\to F\to \Ext^1(G,\mathcal{E})\to S_3^{\oplus a}\to 0;\\
0\to S_1\to F\to S_2^{\oplus 2}\to 0.\\
\textrm{(resp.\ }0\to F\to \Ext^1(G,\mathcal{F})\to S_3^{\oplus a}\to 0;\\
0\to S_1^{\oplus 2}\to F\to S_2^{\oplus 2}\to 0.\textrm{)}
\end{gather*}
Correspondingly we obtain the following distinguished triangles
\begin{gather}
F\lotimes_AG\to \Ext^1(G,\mathcal{E})\lotimes_AG\to \mathcal{O}(-1)^{\oplus a}[3]\to ;\label{h0h1=01F}\\
\Omega_{\mathbb{P}^3}(1)[1]\to F\lotimes_AG\to \Omega_{\mathbb{P}^3}^2(2)^{\oplus 2}[2]\to \label{h0h1=01KeyDt}\\
\textrm{(resp.\ }F\lotimes_AG\to \Ext^1(G,\mathcal{F})\lotimes_AG\to \mathcal{O}(-1)^{\oplus a}[3]\to ;\label{h0h1=12F}\\
\Omega_{\mathbb{P}^3}(1)^{\oplus 2}[1]\to F\lotimes_AG\to \Omega_{\mathbb{P}^3}^2(2)^{\oplus 2}[2]\to \label{h0h1=12KeyDt}\textrm{)}
\end{gather}
by (\ref{Sjpullback}).
We see that $\mathcal{H}^{-1}(F\lotimes_AG)\cong E_2^{-1,1}$ by (\ref{h0h1=01F}) (resp. (\ref{h0h1=12F})).
Moreover we have $\mathcal{H}^{p}(F\lotimes_AG)=0$ unless $p=-2$ or $-1$
by (\ref{h0h1=01KeyDt}) (resp.(\ref{h0h1=12KeyDt})).
Set $\mathcal{M}=\mathcal{H}^{-2}(F\lotimes_AG)$. 
Then it follows from 
(\ref{h0h1=01F}) and (\ref{h0h1=01KeyDt}) 
(resp. (\ref{h0h1=12F}) and (\ref{h0h1=12KeyDt}))
that $\mathcal{M}$
fits in the following exact sequences
\begin{gather}
0\to E_2^{-3,1}\to \mathcal{O}(-1)^{\oplus a}\to \mathcal{M}\to E_2^{-2,1}\to 0;\label{E2-31MudaAri}\\
0\to \mathcal{M}\to \Omega_{\mathbb{P}^3}^2(2)^{\oplus 2}\xrightarrow{\varphi} \Omega_{\mathbb{P}^3}(1)\to E_2^{-1,1}\to 0.\label{h0h1=01KeyExSeq}\\
\textrm{(resp.\ }0\to \mathcal{M}\to \Omega_{\mathbb{P}^3}^2(2)^{\oplus 2}\xrightarrow{\varphi} \Omega_{\mathbb{P}^3}(1)^{\oplus 2}
\to E_2^{-1,1}\to 0.\textrm{)}\label{h0h1=12KeyExSeq}
\end{gather}
It follows from (\ref{h0E(0)andE(1)incasec2=5c3=3}) and 
the assumption $h^0(\mathcal{E}(-1))=0$ (resp.\ $1$) that the right $A$-module $\Hom(G,\mathcal{E})$ 
(resp.\ $\Hom(G,\mathcal{F})$)
is isomorphic to $S_0^{\oplus r+3}$ (resp.\ $S_0^{\oplus r-1}$).
Hence $E_2^{p,0}=0$ unless $p=0$, and 
$E_2^{0,0}$ is isomorphic to $\mathcal{O}_{\mathbb{P}^3}^{\oplus r+3}$ (resp.\ $\mathcal{O}_{\mathbb{P}^3}^{\oplus r-1}$) by (\ref{Sjpullback}).
Therefore $E_2^{p,q}=0$ unless $(p+q,q)=(0,0)$, $(0,1)$, $(-1,1)$, $(-1,2)$, or $(-2,1)$.
Then it follows from (\ref{BondalSpectralSequence}) that $E_2^{-3,1}=0$, and consequently the sequence (\ref{E2-31MudaAri})
becomes the following exact sequence
\begin{equation}
0\to \mathcal{O}(-1)^{\oplus a}\to \mathcal{M}\to E_2^{-2,1}\to 0.\label{E2-31MudaNashi}\\
\end{equation}
Besides (\ref{h0h1=01KeyExSeq}) (resp.\ (\ref{h0h1=12KeyExSeq})) and (\ref{E2-31MudaNashi}),
the spectral sequence (\ref{BondalSpectralSequence}) induces the following exact sequences
\begin{gather}
0\to E_2^{-2,1}\to \mathcal{O}_{\mathbb{P}^3}^{\oplus r+3}\to E_3^{0,0}\to 0;\label{surjOntoE300}\\
\textrm{(resp.\ }0\to E_2^{-2,1}\to \mathcal{O}_{\mathbb{P}^3}^{\oplus r-1}\to E_3^{0,0}\to 0;\textrm{)}\label{h0h1=12surjOntoE300}\\
0\to E_3^{-3,2}\to \mathcal{O}(-1)^{\oplus a}\to E_2^{-1,1}\to E_3^{-1,1}\to 0;\label{E3-11quotient}\\
0\to E_3^{-3,2}\to E_3^{0,0}\to E_4^{0,0}\to 0;\label{surjOntoE400}\\
0\to E_4^{0,0}\to \mathcal{E}\to E_3^{-1,1}\to 0.\label{h0h1=01EhasE3-11quotient}\\
\textrm{(resp.\ }0\to E_4^{0,0}\to \mathcal{F}\to E_3^{-1,1}\to 0.\textrm{)}\label{h0h1=12FhasE3-11quotient}
\end{gather}

For $i=1$ (resp.\ $i=1,2$) and $j=1,2$,
denote by $\varphi_{ij}$ the composite of the $j$-th inclusion $\Omega_{\mathbb{P}^3}^2(2)\hookrightarrow\Omega_{\mathbb{P}^3}^2(2)^{\oplus 2}$
and the morphism 
$\varphi$
appearing in the exact sequence (\ref{h0h1=01KeyExSeq}) (resp. (\ref{h0h1=12KeyExSeq})) and the $i$-th projection.

We claim here that 
$\varphi_{11}$ and $\varphi_{12}$
(resp. $\varphi_{i1}$ and $\varphi_{i2}$ for each $i$,
and $\varphi_{1j}$ and $\varphi_{2j}$ for each $j$)
are linearly independent
in $\Hom(\Omega_{\mathbb{P}^3}^2(2), \Omega_{\mathbb{P}^3}(1))$.
Indeed, if they are not, then
we may assume that 
$\varphi_{12}=0$
by some row or column elementary transformations;
thus $\varphi$ (resp.\ the composite of $\varphi$ and the first projection $\Omega_{\mathbb{P}^3}(1)^{\oplus 2}\to \Omega_{\mathbb{P}^3}(1)$)
factors through $\varphi_{11}$ via the first projection 
$\Omega_{\mathbb{P}^3}^2(2)^{\oplus 2}\to \Omega_{\mathbb{P}^3}^2(2)$.
Note here that the cokernel of $\varphi_{11}$ is isomorphic to the ideal sheaf $\mathcal{I}_p$ 
for some point $p\in \mathbb{P}^3$ by Lemma~\ref{BeilinsonGreat}
if $\varphi_{11}\neq 0$ and to $\Omega_{\mathbb{P}^3}(1)$ if $\varphi_{11}=0$.
Hence there exists a surjection $E_2^{-1,1}\to \mathcal{I}_p$
by (\ref{h0h1=01KeyExSeq})
(resp.\ (\ref{h0h1=12KeyExSeq}))
(even if $\varphi_{11}=0$).
Consider the composite of the morphism $\mathcal{O}(-1)^{\oplus a}\to E_2^{-1,1}$ in (\ref{E3-11quotient})
and the surjection $E_2^{-1,1}\to \mathcal{I}_p$.
Since the cokernel of the composite is a quotient of $E_3^{-1,1}$
and $a\leq 2$ by (\ref{aleq2}) and (\ref{definitionOfa}),
we see that 
$E_3^{-1,1}$ has $\mathcal{O}_L(-1)$ as a quotient for some line $L$ passing through $p$ in $\mathbb{P}^3$.
On the other hand, $E_3^{-1,1}$ can not have a negative degree sheaf as a quotient,
since it is a quotient of the nef vector bundle $\mathcal{E}$ by (\ref{h0h1=01EhasE3-11quotient})
(resp.\ (\ref{h0h1=12FhasE3-11quotient}) and (\ref{defofF})).
This is a contradiction. Therefore the claim holds.

In case $(h^0(\mathcal{E}(-1)), h^1(\mathcal{E}(-1)))=(1,2)$,
denote by $V$ the vector space generated by $\varphi_{ij}$ $(1\leq i,j\leq 2)$
in the following.
If $\dim V\leq 3$, then the claim above implies that
there exist $\lambda_{ij}\in K$ $(1\leq i, j\leq 2)$ such that 
$\lambda_{21}\varphi_{11}+\lambda_{22}\varphi_{21}=\lambda_{11}\varphi_{12}+\lambda_{12}\varphi_{22}\neq 0$.
Therefore, by changing the free basis consisting two elements of $\Omega_{\mathbb{P}^3}(1)^{\oplus 2}$,
we may assume that $\varphi_{21}=\varphi_{12}$ in case $\dim V\leq 3$.
Moreover if $\dim V=2$, we can make $\varphi_{12}$ to be zero 
after some row and column elementary transformations,
which contradicts the claim above.
Therefore $\dim V=3$ or $4$,
and we may furthermore assume that $\varphi_{11}$, $\varphi_{12}$, and $\varphi_{22}$ are linearly independent.

%Denote by $V$ the vector space generated by $\varphi_{11}$ and $\varphi_{12}$.
%(resp. Denote by $V$ the vector space generated by $\varphi_{ij}$ $(1\leq i,j\leq 2)$.
%If $\dim V\leq 3$, then the claim above implies that
%there exist $\lambda_{ij}\in K$ $(1\leq i, j\leq 2)$ such that 
%$\lambda_{21}\varphi_{11}+\lambda_{22}\varphi_{21}=\lambda_{11}\varphi_{12}+\lambda_{12}\varphi_{22}\neq 0$.
%Therefore, by changing the free basis consisting two elements of $\Omega_{\mathbb{P}^3}(1)^{\oplus 2}$,
%we may assume that $\varphi_{21}=\varphi_{12}$ in case $\dim V\leq 3$.
%Moreover if $\dim V=2$, we can make $\varphi_{12}$ to be zero 
%after some row and column elementary transformations,
%which contradicts the claim above.
%Therefore $\dim V=3$ or $4$,
%and we may furthermore assume that $\varphi_{11}$, $\varphi_{12}$, and $\varphi_{22}$ are linearly independent.)

Set $\varphi_0=\varphi_{11}$ and $\varphi_1=\varphi_{12}$.
(resp. Set $\varphi_0=\varphi_{11}$, $\varphi_1=\varphi_{12}$, $\varphi_2=\varphi_{21}$, and $\varphi_3=\varphi_{22}$.
Note that $\varphi_1=\varphi_2$ in case $\dim V=3$.)

Let $e_i^*$ be the image of $x_i^*$ via the isomorphism $H^0(\mathcal{O}(1))^*\to H^0(T_{\mathbb{P}^3}(-1))$.
Let $s$ be an element of $H^0(T_{\mathbb{P}^3}(-1))$,
and suppose that $s=\sum_{i=0}^3a_ie_i^*$ for some $a_i\in K$.
Note that $e_0^*=-\sum_{i=1}^3(x_i/x_0)e_i^*$ and $(e_1^*,e_2^*,e_3^*)$
is a local basis of $T_{\mathbb{P}^3}(-1)$ over 
$D_+(x_0)$, where $D_+(x_0)$ is the open set defined by $x_0\neq 0$.
Thus $s=\sum_{i=1}^3\{a_i-a_0(x_i/x_0)\}e_i^*$ over $D_+(x_0)$.
Let $(e_1,e_2,e_3)$ be the dual basis of $\Omega_{\mathbb{P}^3}(1)|_{D_+(x_0)}$.
Then the Koszul morphism $\varphi_s$ from $\Omega_{\mathbb{P}^3}^2(2)$ to $\Omega_{\mathbb{P}^3}(1)$
corresponding to the section $s$
has the representation matrix
\begin{equation}\label{UnivMatrix}
\begin{bmatrix}
0&a_3-a_0(x_3/x_0)&-(a_2-a_0(x_2/x_0))\\
-(a_3-a_0(x_3/x_0))&0&a_1-a_0(x_1/x_0)\\
a_2-a_0(x_2/x_0)&-(a_1-a_0(x_1/x_0))&0
\end{bmatrix}
\end{equation}
with respect to the bases $(e_2\wedge e_3,e_3\wedge e_1,e_1\wedge e_2)$ and $(e_1,e_2,e_3)$
over $D_+(x_0)$.
Denote this matrix by $M(a_0,a_1,a_2,a_3)$.

Now Lemma~\ref{BeilinsonGreat} enables us to assume,
by taking homogeneous coordinates $(x_0\colon x_1\colon x_2\colon x_3)$ of $\mathbb{P}^3$ suitably,
that $\varphi_i$ corresponds to 
the global section $e_{i}^*$ of $T_{\mathbb{P}^3}(-1)$
for $i=0,1$.
Similarly, in case $(h^0(\mathcal{E}(-1)), h^1(\mathcal{E}(-1)))=(1,2)$,
we may assume 
that $\varphi_i$ corresponds to 
the global section $e_{i}^*$ of $T_{\mathbb{P}^3}(-1)$ for $i=0,1,2,3$ if $\dim V=4$
and that $\varphi_i$ to $e_i^*$ except $i=2$
and $\varphi_2$ to $e_1^*$ if $\dim V=3$.

We claim here that $\coker([\varphi_0,\varphi_1]):\Omega_{\mathbb{P}^3}(1)\to \Coker([\varphi_0,\varphi_1])$
is the composite of the two restrictions
$\Omega_{\mathbb{P}^3}(1)\to \Omega_{\mathbb{P}^3}(1)|_L$ and $\Omega_{\mathbb{P}^3}(1)|_L\to \Omega_{L}(1)$
where $L$ is the line defined by $x_2=x_3=0$.
Note first that the morphism $[\varphi_0,\varphi_1]:\Omega_{\mathbb{P}^3}^2(2)^{\oplus 2}\to \Omega_{\mathbb{P}^3}(1)$
has the representation matrix
\begin{equation*}
\begin{bmatrix}
M(1,0,0,0)&M(0,1,0,0)
\end{bmatrix}
=
\begin{bmatrix}
0       &-x_3/x_0& x_2/x_0&0& 0&0\\
x_3/x_0 &       0&-x_1/x_0&0& 0&1\\
-x_2/x_0& x_1/x_0&       0&0&-1&0
\end{bmatrix}
\end{equation*}
with respect to the bases $(e_2\wedge e_3,e_3\wedge e_1,e_1\wedge e_2,e_2'\wedge e_3',e_3'\wedge e_1',e_1'\wedge e_2')$ and $(e_1,e_2,e_3)$
over $D_+(x_0)$,
where $(e_1',e_2',e_3')$ is the dual basis of the second component $\Omega_{\mathbb{P}^3}(1)|_{D_+(x_0)}$
of the direct sum $\Omega_{\mathbb{P}^3}^2(2)^{\oplus 2}$.
Since $e_i=x_0d(x_i/x_0)$ for $i=1,2,3$, 
the image $\im [\varphi_0,\varphi_1]|_{D_+(x_0)}$ of $[\varphi_0,\varphi_1]|_{D_+(x_0)}$ is
generated by $x_0d(x_2/x_0)$, $x_0d(x_3/x_0)$, $x_2d(x_1/x_0)$, and $x_3d(x_1/x_0)$.
Note that $x_2d(x_i/x_0)=(x_2/x_0)x_0d(x_i/x_0)$ and $x_3d(x_i/x_0)=(x_3/x_0)x_0d(x_i/x_0)$ $(1\leq i\leq 3)$
and that the quotient of $\Omega_{\mathbb{P}^3}(1)|_{D_+(x_0)}$ by the submodule generated by $x_2d(x_i/x_0)$ and $x_3d(x_i/x_0)$
$(1\leq i\leq 3)$
is $(\Omega_{\mathbb{P}^3}(1)\otimes \mathcal{O}_L)|_{D_+(x_0)}$,
where $L$ is the line defined by $x_2=x_3=0$.
Hence $\Coker([\varphi_0,\varphi_1])|_{D_+(x_0)}$ is nothing but the quotient $\Omega_{L}(1)|_{D_+(x_0)}$
of $(\Omega_{\mathbb{P}^3}(1)\otimes \mathcal{O}_L)|_{D_+(x_0)}$ by the submodule generated by 
$x_0d(x_2/x_0)\otimes 1$ and $x_0d(x_3/x_0)\otimes 1$.
Similarly $\Coker([\varphi_0,\varphi_1])|_{D_+(x_1)}$ is naturally isomorphic to $\Omega_{L}(1)|_{D_+(x_1)}$.
Moreover $\Coker([\varphi_0,\varphi_1])|_{D_+(x_i)}=0$ for $i=2$ and $3$.
Therefore the claim holds.

\subsubsection{The case where $(h^0(\mathcal{E}(-1)), h^1(\mathcal{E}(-1)))=(0,1)$}\label{(h^0({E}(-1)), h^1({E}(-1)))=(0,1)}
Since $\varphi=[\varphi_0,\varphi_1]$,
we see by the claim above 
that $\coker(\varphi):\Omega_{\mathbb{P}^3}(1)\to E_2^{-1,1}$
is nothing but the composite $\Omega_{\mathbb{P}^3}(1)\to \Omega_{\mathbb{P}^3}(1)|_L\to \Omega_{L}(1)$ of the restrictions,
where $L$ is a line.
Therefore $E_2^{-1,1}\cong \mathcal{O}_L(-1)$.
Then the sequence~(\ref{E3-11quotient}) implies that $E_3^{-1,1}$ is either 
$\mathcal{O}_L(-1)$ or zero,
but (\ref{h0h1=01EhasE3-11quotient}) and the nefness of $\mathcal{E}$ imply that the former can not happen.
Therefore $E_3^{-1,1}=0$ and $E_4^{0,0}\cong \mathcal{E}$;
thus we obtain a surjection $\mathcal{O}^{\oplus r+3}\to \mathcal{E}$
by (\ref{surjOntoE300}) and (\ref{surjOntoE400}).
In fact, this can not happen, as can be seen below.

Denote by $\mathcal{H}$ the kernel of the surjection $\mathcal{O}^{\oplus r+3}\to \mathcal{E}$;
$\mathcal{H}$ is a vector bundle of rank three.
Note here that the dual $\mathcal{H}^{\vee}$ of $\mathcal{H}$ is 
globally generated.
Moreover 
$c_1(\mathcal{H}^{\vee})=3$
and 
\[c_2(\mathcal{H}^{\vee})=c_2(\mathcal{H})=-c_1(\mathcal{H})c_1-c_2=4.\]
Furthermore 
\[
c_3(\mathcal{H}^{\vee})=-c_3(\mathcal{H})=c_2(\mathcal{H})c_1+c_1(\mathcal{H})c_2+c_3=0.
\]
Since $\mathcal{H}^{\vee}$ is a globally generated vector bundle of rank three with $c_3(\mathcal{H}^{\vee})=0$,
we have an exact sequence of vector bundles
\[0\to \mathcal{O}\to \mathcal{H}^{\vee}\to \mathcal{H}'\to 0.
\]
Then it follows from \cite[Proposition 2.4]{MR3019571} that $\mathcal{H}'\cong \mathcal{O}(1)\oplus\mathcal{O}(2)$.
This however contradicts that $c_2(\mathcal{H}^{\vee})=4$.
The case $(h^0(\mathcal{E}(-1)), h^1(\mathcal{E}(-1)))=(0,1)$
thus can not happen.

\subsubsection{The case where $(h^0(\mathcal{E}(-1)), h^1(\mathcal{E}(-1)))=(1,2)$}\label{(h^0({E}(-1)), h^1({E}(-1)))=(1,2)}
As we have shown
in \S\ref{SetUpFor(n,c_2, c_3)=(3,5,3)},
the composite $[\varphi_0,\varphi_1]$ of $\varphi$
and the first projection $\Omega_{\mathbb{P}^3}(1)^{\oplus 2}\to \Omega_{\mathbb{P}^3}(1)$
has
$\Omega_{L}(1)$ as its cokernel, 
where $L$ is the line defined by $x_2=x_3=0$.
Therefore $E_2^{-1,1}$ has $\mathcal{O}_L(-1)$ as a quotient
by (\ref{h0h1=12KeyExSeq}).

It follows from (\ref{UnivMatrix})
that $\varphi$ 
has,
with respect to the bases $(e_2\wedge e_3,e_3\wedge e_1,e_1\wedge e_2,e_2'\wedge e_3',e_3'\wedge e_1',e_1'\wedge e_2')$ and 
$(e_1,e_2,e_3,e_1',e_2',e_3')$
over $D_+(x_0)$,
the representation matrix
\begin{equation*}
\begin{bmatrix}
M(1,0,0,0)&M(0,1,0,0)\\
M(0,0,1,0)&M(0,0,0,1)
\end{bmatrix}
\end{equation*}
if $\dim V=4$,
and the representation matrix
\begin{equation*}
\begin{bmatrix}
M(1,0,0,0)&M(0,1,0,0)\\
M(0,1,0,0)&M(0,0,0,1)
\end{bmatrix}
\end{equation*}
if $\dim V=3$.
Therefore $\Coker(\varphi)$ is supported on the quadric surface
defined by $x_0x_3-x_1x_2=0$ if $\dim V=4$
and by $x_2^2=0$ if $\dim V=3$.
This implies that  $\varphi$ is injective.
Hence $\mathcal{M}=0$ by (\ref{h0h1=12KeyExSeq});
it follows from (\ref{E2-31MudaNashi}) that $a=0$ and $E_2^{-2,1}=0$;
thus $E_2^{-1,1}\cong E_3^{-1,1}$ by (\ref{E3-11quotient}).
Since $E_2^{-1,1}$ has $\mathcal{O}_L(-1)$ as a quotient,
so does $E_3^{-1,1}$.
However $E_3^{-1,1}$ can not have $\mathcal{O}_L(-1)$ 
as a quotient by (\ref{defofF}) and (\ref{h0h1=12FhasE3-11quotient})
since $\mathcal{E}$ is nef.
Therefore we conclude 
that the case $(h^0(\mathcal{E}(-1)), h^1(\mathcal{E}(-1)))=(1,2)$
does not happen.

\section{Proof of Theorem~\ref{c_1=3c_2<8}}\label{Proof of Main Theorem}
As we stated at the end of \S \ref{Introduction}, 
we always assume that $c_2=9$ in this section. 

\subsection{Proof for the case $n=2$ and $h^1(\mathcal{E})=0$}\label{Proof for the case n=2 and h^1(E)=0}
It follows from 
(\ref{h1E-2onP2}), 
(\ref{RRonP2(-1)}), and (\ref{RRonP2(0)})
that 
\begin{gather}
h^1(\mathcal{E}(-2))=6,\label{h1E-2onP2c2=9}\\
h^0(\mathcal{E}(-1))-h^1(\mathcal{E}(-1))=-3,\label{RRonP2(-1)c2=9}\\
h^0(\mathcal{E})=r.\label{RRonP2(0)c2=9}
\end{gather}
As in \S \ref{Set-up for the two-dimensional case}, set 
$e_{0,1}=h^0(\mathcal{E}(-1))$.
Then 
\begin{equation}\label{e01ha0ika}
e_{0,1}\leq 1.
\end{equation}
The proof of this fact 
goes almost identical to that 
in case $c_2=8$,
i.e., that in \cite[\S~3.1]{Nefofc1=3c2=8OnPN}: what we have to do is just to change several numerals
related to $c_2=8$ to those related to $c_2=9$; so we omit the proof of this fact.

We apply to $\mathcal{E}$ the Bondal spectral sequence 
(\ref{BondalSpectralSequence}).
As we have seen in 
\S\ref{Set-up for the two-dimensional case}
and Lemma~\ref{key} (2),
$E_2^{p,q}$ vanishes unless $(p,q)=(-2,1)$, $(-1,1)$ or $(0,0)$,
and $E_2^{-2,1}$ and $E_2^{-1,1}$ fit in one of the following exact sequences:
\begin{gather}
0\to E_2^{-2,1}\to \mathcal{O}(-3)\oplus \mathcal{O}(-1)
\xrightarrow{\nu_2}
\Omega_{\mathbb{P}^2}(1)^{\oplus e_{0,1}+1}
\to E_2^{-1,1}\to k(w)\to 0;\label{exactseqForc2=9}\\
0\to E_2^{-2,1}\to \mathcal{O}(-2)^{\oplus 2}
\xrightarrow{\mu_2}
\Omega_{\mathbb{P}^2}(1)^{\oplus e_{0,1}+1}
\to E_2^{-1,1}\to 0,\label{exactseqForc2=9WithoutBP}
\end{gather}
where $k(w)$ denotes the residue field of 
some point $w$ in $\mathbb{P}^2$.
Recall that each of the exact sequences 
is a consequence of the assumption $H^1(\mathcal{E})=0$.
Note that $E_2^{0,0}$ fits in the following exact sequence by (\ref{E_2^00 exact sequence in dim 2})
\begin{equation}\label{E_2^00 exact sequence in dim 2Forc2=9}
0\to \mathcal{O}^{\oplus 3e_{0,1}}\to 
\mathcal{O}(1)^{\oplus e_{0,1}}\oplus \mathcal{O}^{\oplus r}
\to E_2^{0,0}\to 0.
\end{equation}

\begin{lemma}\label{E2-11haCokerOftheEv}
If $E_2^{-2,1}=0$, then $E_2^{0,0}\cong E_3^{0,0}\cong \mathcal{O}(1)^{\oplus e_{0,1}}\oplus \mathcal{O}^{\oplus r-3e_{0,1}}$,
and $\mathcal{E}$ fits in the following exact sequence
\begin{equation}\label{(7.6)nobaainoE}
0\to \mathcal{O}(1)^{\oplus e_{0,1}}\oplus \mathcal{O}^{\oplus r-3e_{0,1}}\to \mathcal{E}\to E_2^{-1,1}\to 0.
\end{equation}
Moreover $H^i(E_2^{-1,1})=0$ for all $i$;
thus $E_2^{-1,1}$ is the cokernel of the evaluation map $H^0(\mathcal{E})\otimes\mathcal{O}\to \mathcal{E}$.
\end{lemma}
\begin{proof}
Since $E_2^{-2,1}=0$, $E_2^{0,0}\cong E_3^{0,0}$ by (\ref{E_3^{0,0}definition}),
and hence $E_2^{0,0}$ is torsion-free by (\ref{E_2^{-1,1}quotient}).
Therefore (\ref{E_2^00 exact sequence in dim 2Forc2=9}) shows that $\mathcal{O}(1)$ is a subsheaf of $E_2^{0,0}$
if $e_{0,1}=1$,
and consequently we see that $E_2^{0,0}$ is isomorphic to $\mathcal{O}(1)^{\oplus e_{0,1}}\oplus\mathcal{O}^{\oplus r-3e_{0,1}}$.
Hence it follows from (\ref{E_2^{-1,1}quotient})
that $\mathcal{E}$ fits in the desired exact sequence (\ref{(7.6)nobaainoE}).
Since we have (\ref{RRonP2(0)c2=9}),  
$H^0(\mathcal{O}(1)^{\oplus e_{0,1}}\oplus\mathcal{O}^{\oplus r-3e_{0,1}})$ is isomorphic to $H^0(\mathcal{E})$,
and thus
$H^0(E_2^{-1,1})$ vanishes.
Furthermore $H^i(E_2^{-1,1})=0$ for all $i$ since $h^1(\mathcal{E})=0$ by assumption and $h^2(\mathcal{E})=0$ by 
(\ref{H^2vanishing}).
\end{proof}

Let $\alpha_{ij}:\mathcal{O}(-2)\to \Omega_{\mathbb{P}^2}(1)$ be 
the composite of the $j$-th inclusion $\mathcal{O}(-2)\to \mathcal{O}(-2)^{\oplus 2}$,
$\mu_2$, and the $i$-th projection $\Omega_{\mathbb{P}^2}(1)^{\oplus e_{0,1}+1}\to \Omega_{\mathbb{P}^2}(1)$.
The $\alpha_{ij}$'s form a matrix $[\alpha_{ij}]$, which is another expression of $\mu_2$.

\begin{lemma}\label{property of mu_2}
For each $i$, $\alpha_{i1}$ and $\alpha_{i2}$ are linearly independent.
Similarly, for each $j$,  elements in the $j$-th column are linearly independent.
Furthermore the cokernel of the composite $[\alpha_{11},\alpha_{12}]$ of $\mu_2$ and the first projection 
$\Omega_{\mathbb{P}^2}(1)^{\oplus e_{0,1}+1}\to \Omega_{\mathbb{P}^2}(1)$
is supported on a cubic curve $E$, and $[\alpha_{11},\alpha_{12}]$ and $\mu_2$ are injective.
In particular $E_2^{-2,1}=0$ in (\ref{exactseqForc2=9WithoutBP}).
\end{lemma}
\begin{proof}
First note that $E_2^{-1,1}$ in (\ref{exactseqForc2=9WithoutBP}) has as a quotient
the cokernel of the composite, say $\psi$, of $\mu_2$ and any surjection 
$\Omega_{\mathbb{P}^2}(1)^{e_{0,1}+1}\to \Omega_{\mathbb{P}^2}(1)$.
Since $E_2^{-1,1}$ can not have $\Omega_{\mathbb{P}^2}(1)$ as a quotient  by Lemma~\ref{very often},
we observe that the composite $\psi$ can not be zero.

Suppose that
elements in one of the rows or the columns
are linearly dependent.
After some row or column elementary transformations, by replacing $\alpha_{ij}$,
we may then assume that $\alpha_{11}\neq 0$ and that $\alpha_{12}=0$
by the observation above.
The morphism $[\alpha_{11},0]$
factors through the first projection $\mathcal{O}(-2)^{\oplus 2}\to \mathcal{O}(-2)$,
and $\Coker([\alpha_{11},0])=\Coker(\alpha_{11})$.
If $\alpha_{11}$ factors through $\mathcal{O}(-1)$,
the cokernel of the induced morphism $\mathcal{O}(-1)\to\Omega_{\mathbb{P}^2}(1)$
is isomorphic to $\mathcal{I}_p$ for some point $p\in \mathbb{P}^2$
and $\Coker(\alpha_{11})$ fits in the following exact sequence
\[
0\to \mathcal{O}_L(-1)\to \Coker(\alpha_{11})\to \mathcal{I}_p\to 0,
\]
where $\mathcal{O}_L(-1)$ is the cokernel of the induced injection $\mathcal{O}(-2)\to \mathcal{O}(-1)$. 
In particular $\mathcal{O}_L(-1)$ is the torsion subsheaf of $\Coker(\alpha_{11})$,
and $\mathcal{I}_p$ is the quotient of $\Coker(\alpha_{11})$ by its torsion subsheaf.
This contradicts Lemma~\ref{very often}.
Therefore $\alpha_{11}$ does not factor through $\mathcal{O}(-1)$;
this implies that $\Coker(\alpha_{11})$ is isomorphic to $\mathcal{I}_Z(1)$,
where $Z$ is a $0$-dimensional closed subscheme of length three on $\mathbb{P}^2$.
This however can not happen by Lemma~\ref{very often}.
Therefore elements in the same row or column are linearly independent.

Since $\alpha_{11}$ and $\alpha_{12}$ are linearly independent,
the morphism $[\alpha_{11},\alpha_{12}]$ induces a non-zero morphism $\bar{\alpha}_{12}:\mathcal{O}(-2)\to \Coker(\alpha_{11})$,
and we see that $\Coker([\alpha_{11},\alpha_{12}])\cong \Coker(\bar{\alpha}_{12})$.
Denote by $\Coker(\alpha_{11})/\tors$ the quotient of $\Coker(\alpha_{11})$ by its torsion subsheaf.
As we have seen above, 
$\Coker(\alpha_{11})/\tors$
is either $\mathcal{I}_p$ or $\mathcal{I}_Z(1)$.
If $\Coker(\alpha_{11})/\tors=\mathcal{I}_p$, then $\Coker(\bar{\alpha}_{12})$ admits a negative degree line bundle on a conic
as a quotient,
which is impossible by Lemma~\ref{very often}.
Hence $\Coker(\alpha_{11})=\mathcal{I}_Z(1)$; thus $\Coker(\bar{\alpha}_{12})$ is supported on a cubic curve $E$ passing through $Z$.
Therefore $\bar{\alpha}_{12}$ is injective.
Since $\alpha_{11}$ is also injective,
so is $[\alpha_{11},\alpha_{12}]$.
Hence $\mu_2$ is injective, and thus $E_2^{-2,1}=0$.
\end{proof}

\begin{lemma}\label{(7.6)noBaaiNoKetsuron}
If $E_2^{-1,1}$ fits in the exact sequence (\ref{exactseqForc2=9WithoutBP}),
then it fits in the following exact sequence
\begin{equation}\label{(7.6)noShinkakei}
0\to \mathcal{O}(-2)^{\oplus e_{0,1}+3}
\to
\mathcal{O}(-1)^{\oplus 3e_{0,1}+3}
\to E_2^{-1,1}\to 0.
\end{equation}
\end{lemma}
\begin{proof}
By Lemma~\ref{property of mu_2}, $E_2^{-2,1}=0$,
and thus $E_2^{-1,1}$ fits in the following exact sequence
\begin{equation}\label{(18)noKagikaNoMoto}
0\to \mathcal{O}(-2)^{\oplus 2}
\xrightarrow{\mu_2}
\Omega_{\mathbb{P}^2}(1)^{\oplus e_{0,1}+1}
\to E_2^{-1,1}\to 0.
\end{equation}
Since $\Omega_{\mathbb{P}^2}(1)$ is isomorphic to $T_{\mathbb{P}^2}(-2)$,
the Euler sequence and the sequence above 
induce the desired exact sequence (\ref{(7.6)noShinkakei}).
\end{proof}

Denote by $\mathcal{K}$ the kernel of the surjection $E_2^{-1,1}\to k(w)$ in (\ref{exactseqForc2=9}).
\begin{lemma}\label{often}
The sheaf $\mathcal{K}$ can not admit the following sheaves as a quotient:
\begin{enumerate}
\item[$(1)$] $\Omega_{\mathbb{P}^2}(1)$;
\item[$(2)$] $\mathcal{I}_W(1)$, where $W$ is a $0$-dimensional closed subscheme of $\length W\geq 3$;
\item[$(3)$] $\mathcal{I}_W(2)$, where $W$ is a $0$-dimensional closed subscheme of $\length W\geq 6$.
\end{enumerate}
\end{lemma}
\begin{proof}
If $\mathcal{K}$ admits $\Omega_{\mathbb{P}^2}(1)$ as a quotient,
then $E_2^{-1,1}|_L$ admits $\mathcal{O}_L(-1)$ as a quotient for a line $L$ not passing through $w$.
This contradicts Lemma~\ref{very often}.
Suppose that $\mathcal{K}$ admits $\mathcal{I}_W(d)$ as a quotient, where $\length W\geq 3$ if $d=1$
and $\length W\geq 6$ if $d=2$.
Then we obtain the following exact sequence
\[0\to \mathcal{I}_W(d)\to \mathcal{D}\to k(w)\to 0\]
for some quotient sheaf $\mathcal{D}$ of $E_2^{-1,1}$.
Note that this sequence does not split since $E_2^{-1,1}$ does not admit $\mathcal{I}_W(d)$ as a quotient 
by Lemma~\ref{very often}.
Lemma~\ref{DhaO} then implies that $\mathcal{D}\cong \mathcal{I}_Z(d)$,
where $Z$ is a $0$-dimensional closed subscheme of $\length Z=\length W-1$.
However $E_2^{-1,1}$ can not admit $\mathcal{I}_Z(d)$ as a quotient by Lemma~\ref{very often}.
\end{proof}

Let $\beta_i:\mathcal{O}(-1)\to \Omega_{\mathbb{P}^2}(1)$ be the composite of the inclusion 
$\mathcal{O}(-1)\to \mathcal{O}(-3)\oplus\mathcal{O}(-1)$,
$\nu_2$, and the $i$-th projection $\Omega_{\mathbb{P}^2}(1)^{\oplus e_{0,1}+1}\to \Omega_{\mathbb{P}^2}(1)$,
and
let $\gamma_i:\mathcal{O}(-3)\to \Omega_{\mathbb{P}^2}(1)$ be the composite of the inclusion 
$\mathcal{O}(-3)\to \mathcal{O}(-3)\oplus\mathcal{O}(-1)$,
$\nu_2$, and the $i$-th projection $\Omega_{\mathbb{P}^2}(1)^{\oplus e_{0,1}+1}\to \Omega_{\mathbb{P}^2}(1)$.
We shall denote
by $\Coker(\gamma_i)/\tors$ the quotient of $\Coker(\gamma_i)$ by its torsion subsheaf.

\begin{lemma}\label{property of Coker(gammai)/tor}
If $\gamma_i\neq 0$, then 
$\Coker(\gamma_i)/\tors$ is isomorphic to $\mathcal{I}_W(d)$
for some $0$-dimensional closed subscheme $W$ of $\mathbb{P}^2$
and some integer $d$,
and the possible values of the pair $(\length W,d)$ are $(7,2)$, $(3,1)$, or $(1,0)$.
\end{lemma}
\begin{proof}
Suppose that $\gamma_i$ factors through $\mathcal{O}(-1)$.
The cokernel of the induced morphism $\mathcal{O}(-1)\to \Omega_{\mathbb{P}^2}(1)$ is
isomorphic to $\mathcal{I}_p$ for some point $p\in \mathbb{P}^2$,
and $\Coker(\gamma_i)$ fits in the following exact sequence
\[0\to \mathcal{O}_C(-1)\to \Coker(\gamma_i)\to \mathcal{I}_p\to 0,\]
where $C$ is a conic and $\mathcal{O}_C(-1)$ is the cokernel of the induced morphism $\mathcal{O}(-3)\to \mathcal{O}(-1)$.
Hence $(\length W,d)=(1,0)$ in this case.

Suppose that $\gamma_i$ does not factor through $\mathcal{O}(-1)$
and that it factors through $\mathcal{O}(-2)$.
Then the induced morphism $\mathcal{O}(-2)\to \Omega_{\mathbb{P}^2}(1)$ does not factor through $\mathcal{O}(-1)$
and
the cokernel of the morphism $\mathcal{O}(-2)\to \Omega_{\mathbb{P}^2}(1)$ is
isomorphic to $\mathcal{I}_Z(1)$ for some closed subscheme $Z\subset \mathbb{P}^2$ of length three.
Moreover $\Coker(\gamma_i)$ fits in the following exact sequence
\[0\to \mathcal{O}_L(-2)\to \Coker(\gamma_i)\to \mathcal{I}_Z(1)\to 0,\]
where $L$ is the line determined by the induced morphism $\mathcal{O}(-3)\to \mathcal{O}(-2)$
and $\mathcal{O}_L(-2)$ is the cokernel of that morphism.
Hence $(\length W,d)=(3,1)$ in this case.

Finally suppose that $\gamma_i$ does not factor through $\mathcal{O}(-1)$ nor $\mathcal{O}(-2)$.
Then $\gamma_i$ induces a regular section $s\in H^0(\Omega_{\mathbb{P}^2}(4))$, whose zero locus $(s)_0$ has length seven.
Hence $\Coker(\gamma_i)$ is isomorphic to $\mathcal{I}_W(2)$, where $W=(s)_0$.
Therefore $(\length W,d)=(7,2)$ in this case.
\end{proof}

\begin{lemma}\label{property of nu_2}
For any $i$, $\beta_{i}$ is non-zero.
\end{lemma}
\begin{proof}
Suppose, to the contrary, that $\beta_1=0$. 
Then the composite $[\gamma_1,\beta_1]$ of $\nu_2$ 
and the first projection $\Omega_{\mathbb{P}^2}(1)^{e_{0,1}+1}\to \Omega_{\mathbb{P}^2}(1)$
factors through $\mathcal{O}(-3)$ via the projection 
$\mathcal{O}(-3)\oplus \mathcal{O}(-1)\to \mathcal{O}(-3)$,
and $\Coker([\gamma_1,0])$ is isomorphic to $\Coker(\gamma_1)$.
Note that $\Coker(\gamma_1)$ is a quotient of $\mathcal{K}$.
It follows from Lemma~\ref{often} that $\gamma_1\neq 0$.
Then $\Coker(\gamma_1)/\tors\cong \mathcal{I}_W(d)$
for some $0$-dimensional closed subscheme $W$ of $\mathbb{P}^2$,
and the possible values of the pair $(\length W,d)$ are $(7,2)$, $(3,1)$, or $(1,0)$
by Lemma~\ref{property of Coker(gammai)/tor}.
However the values $(7,2)$ and $(3,1)$ are ruled out by Lemma~\ref{often}.
Hence $(\length W,d)=(1,0)$
and thus the torsion subsheaf of $\Coker(\gamma_1)$ is isomorphic to 
$\mathcal{O}_C(-1)$, where $C$ is a conic on $\mathbb{P}^2$.
Set $\{p\}=W$.
There exists the following commutative diagram with exact rows
\[
\xymatrix{
0\ar[r]&\mathcal{K}\ar[d]\ar[r]      &E_2^{-1,1}\ar[d]\ar[r]   &k(w)\ar[r]\ar@{=}[d]    & 0    \\
0\ar[r]&\Coker(\gamma_1) \ar[r]      &\mathcal{G}\ar[r]        &k(w)\ar[r]              & 0    
}
\]
for some quotient coherent sheaf $\mathcal{G}$ of $E_2^{-1,1}$.
Denote by $\mathcal{D}$ the cokernel of the composite of the two inclusions $\mathcal{O}_C(-1)\to \Coker(\gamma_1)$
and $\Coker(\gamma_1)\to \mathcal{G}$.
Then we have 
the following commutative diagram with exact rows.
\[
\xymatrix{
0\ar[r]&\Coker(\gamma_1)\ar[d]\ar[r]      &\mathcal{G}\ar[d]\ar[r]   &k(w)\ar[r]\ar@{=}[d]    & 0    \\
0\ar[r]&\mathcal{I}_p \ar[r]              &\mathcal{D}\ar[r]         &k(w)\ar[r]              & 0    
}
\]
Note here that the bottom row of the diagram above does not split
since $E_2^{-1,1}$
can not admit 
$\mathcal{I}_p$
as a quotient by Lemma~\ref{very often}.
Therefore Lemma~\ref{DhaO} implies 
that $w=p$
and 
that 
$\mathcal{D}$ is isomorphic to 
$\mathcal{O}_{\mathbb{P}^2}$.
Hence $\mathcal{G}$ fits in the following exact sequence
\[0\to \mathcal{O}_C(-1)\to \mathcal{G}\to \mathcal{O}_{\mathbb{P}^2}\to 0.\]
Let $\mathcal{E}_1$ be the kernel of the composite of the two surjections $\mathcal{E}\to \mathcal{G}$ and $\mathcal{G}\to \mathcal{O}_{\mathbb{P}^2}$.
Then $\mathcal{O}_C(-1)$ is a quotient of $\mathcal{E}_1$.
Hence $\mathcal{E}_1|_C$ admits $\mathcal{O}_C(-1)$ as a quotient, and it also fits in the following exact sequence
\[
0\to \mathcal{E}_1|_C\to \mathcal{E}|_C\to \mathcal{O}_C\to 0.
\]
Since $C$ is a conic, this implies that $\mathcal{E}|_C$ admits a negative degree quotient,
which contradicts Lemma~\ref{very often}.
\end{proof}
There exists the natural composition 
\[\Hom(\mathcal{O}(-1),\Omega_{\mathbb{P}^2}(1))\times \Hom(\mathcal{O}(-3),\mathcal{O}(-1))
\to \Hom(\mathcal{O}(-3),\Omega_{\mathbb{P}^2}(1)),\]
and we have two cases:
\begin{itemize}
\item $\gamma_1$ has $\beta_1$ as a factor;
\item $\gamma_1$ does not have $\beta_1$ as a factor.
\end{itemize}

\begin{lemma}\label{gamma1gabeta1deWarikirenai}
If $\gamma_1$ does not have $\beta_1$ as a factor,
then $E_2^{-2,1}=0$ and 
$E_2^{-1,1}$ fits in 
(\ref{(7.6)nobaainoE}) and (\ref{(7.6)noShinkakei}).
\end{lemma}
\begin{proof}
If $\gamma_1$ does not have $\beta_1$ as a factor,
then $[\gamma_1,\beta_1]$ induces a non-zero morphism $\bar{\gamma}_1:\mathcal{O}(-3)\to \Coker(\beta_1)$.
Since $\beta_1\neq 0$ by Lemma~\ref{property of nu_2},
this implies that $[\gamma_1,\beta_1]$ is injective.
Hence $\nu_2$ is injective and thus $E_2^{-2,1}=0$.
Lemma~\ref{E2-11haCokerOftheEv} then shows that $E_2^{-1,1}$ lies in (\ref{(7.6)nobaainoE}).
Since $E_2^{-2,1}=0$,
$\mathcal{K}$ fits in the following exact sequence
\[
0\to \mathcal{O}(-3)\oplus\mathcal{O}(-1)\to \Omega_{\mathbb{P}^2}(1)^{\oplus e_{0,1}+1}\to \mathcal{K}\to 0.  
\]
This together with
the isomorphism $\Omega_{\mathbb{P}^2}(1)\cong T_{\mathbb{P}^2}(-2)$
and the Euler sequence
induces the following exact sequence
\[
0\to \mathcal{O}(-3)\oplus\mathcal{O}(-2)^{\oplus e_{0,1}+1}\to \mathcal{O}(-1)^{\oplus 3e_{0,1}+2}\to \mathcal{K}\to 0.  
\]
Since we have an exact sequence
\begin{equation}\label{resolutionofkw}
0\to \mathcal{O}(-3)\to \mathcal{O}(-2)^{\oplus 2}\to \mathcal{O}(-1)\to k(w)\to 0,
\end{equation}
the resolution of $\mathcal{K}$ above implies that $E_2^{-1,1}$ fits in the following exact sequence
\begin{equation}\label{19doesnothappen}
0\to \mathcal{O}(-3)\xrightarrow{d_2} \mathcal{O}(-3)\oplus\mathcal{O}(-2)^{\oplus e_{0,1}+3}
\to \mathcal{O}(-1)^{\oplus 3e_{0,1}+3}\to E_2^{-1,1}\to 0.
\end{equation}
Lemma~\ref{E2-11haCokerOftheEv} and this sequence (\ref{19doesnothappen}) 
then 
show that 
$H^2(d_2)$
is an isomorphism;
we infer that the composite of $d_2$ and the projection 
$\mathcal{O}(-3)\oplus\mathcal{O}(-2)^{\oplus e_{0,1}+3}\to \mathcal{O}(-3)$ is an isomorphism.
Therefore the exact sequence (\ref{19doesnothappen}) is reduced to the exact sequence
(\ref{(7.6)noShinkakei}).
\end{proof}

Now if $E_2^{-1,1}$ lies in (\ref{exactseqForc2=9WithoutBP})
or if $E_2^{-1,1}$ lies in (\ref{exactseqForc2=9})
and $\gamma_1$ does not have $\beta_1$ as a factor,
then $E_2^{-1,1}$ lies also in (\ref{(7.6)nobaainoE}) and (\ref{(7.6)noShinkakei})
by Lemmas~\ref{E2-11haCokerOftheEv}, \ref{property of mu_2}, \ref{(7.6)noBaaiNoKetsuron}, and \ref{gamma1gabeta1deWarikirenai}.
Hence if $e_{0,1}=1$ then we obtain the following exact sequences:
\begin{gather}
0\to \mathcal{O}(-2)^{\oplus 4}\to \mathcal{O}(-1)^{\oplus 6}\to E_2^{-1,1}\to0;\label{E2-11noResolutionInCase18}\\
0\to \mathcal{O}(1)\oplus\mathcal{O}^{\oplus r-3}\to \mathcal{E}\to E_2^{-1,1}\to 0.\label{rank2E2-11}
\end{gather}
These two exact sequences imply that 
$\mathcal{E}$ fits in the exact sequence in case (18) of Theorem~\ref{c_1=3c_2<8}.
If $e_{0,1}=0$ then we obtain the following exact sequences:
\begin{gather}
0\to \mathcal{O}(-2)^{\oplus 3}\to \mathcal{O}(-1)^{\oplus 3}\to 
E_2^{-1,1}
\to 0;\label{O_E(D)resolutionbetter}\\
0\to \mathcal{O}^{\oplus r}\to \mathcal{E}\to 
E_2^{-1,1}
\to 0.\label{EandE(D)}
\end{gather}
These two exact sequences show that
$\mathcal{E}$ fits in the exact sequence in case (16) of Theorem~\ref{c_1=3c_2<8}.

\begin{question}
Does there exist a nef vector bundle fitting in the exact sequence in  case (18) of Theorem~\ref{c_1=3c_2<8} ?
\end{question}

\begin{rmk}\label{cokernelOf16and17}
If $E_2^{-1,1}$ is in 
(\ref{O_E(D)resolutionbetter}),
then $E_2^{-1,1}$ is 
supported on a cubic curve.
\end{rmk}

\begin{lemma}\label{gamma1gabeta1woInshiNiMotu}
If $\gamma_1$ has $\beta_1$ as a factor, then $e_{0,1}=0$,
$E_2^{-1,1}\cong \mathcal{O}$, and we have the following non-split exact sequences:
\begin{gather}
0\to \mathcal{O}(-3)\to \mathcal{O}^{\oplus r}\to E_3^{0,0}\to 0;\label{E_300EssentiallyTangent-1}\\
0\to E_{3}^{0,0}\to \mathcal{E}\to \mathcal{O}\to 0.\label{EhasOasaQuotient}
\end{gather}
In particular
$E_2^{-1,1}$ is the cokernel of the evaluation map $H^0(\mathcal{E})\otimes \mathcal{O}\to \mathcal{E}$.
\end{lemma}
\begin{proof}
Since $\gamma_1$ has $\beta_1$ as a factor,
by changing the free basis of $\mathcal{O}(-3)\oplus\mathcal{O}(-1)$,
we may assume that $\gamma_1=0$.

Suppose that $e_{0,1}=1$.
Then we have the following commutative diagram with exact rows.
\[
\xymatrix{
0\ar[r]&\mathcal{O}(-3)\ar_{\gamma_2}[d]\ar[r]&\mathcal{O}(-3)\oplus\mathcal{O}(-1)\ar[d]_{\nu_2}\ar[r]
&\mathcal{O}(-1)\ar[d]_{\beta_1}\ar[r]&0  \\
0\ar[r]&\Omega_{\mathbb{P}^2}(1)\ar[r]^{\tiny{\begin{bmatrix}0\\1\end{bmatrix}}}        &\Omega_{\mathbb{P}^2}(1)^{\oplus 2} \ar[r]^{[1,0]}       
&\Omega_{\mathbb{P}^2}(1)\ar[r]       &0 
}
\]
Since $\beta_1\neq 0$ by Lemma~\ref{property of nu_2},
$\beta_1$ is injective and $\Coker(\beta_1)=\mathcal{I}_p$ for some point $p\in \mathbb{P}^2$.
Hence 
the diagram above induces an exact sequence
\[
0\to \Coker(\gamma_2)\to \mathcal{K}\to \mathcal{I}_p\to 0.
\]
Denote by $\mathcal{D}$ the quotient of $E_2^{-1,1}$ by $\Coker(\gamma_2)$.
Then we 
obtain the following commutative diagram with exact rows.
\[
\xymatrix{
0\ar[r] &\mathcal{K}   \ar[r]   \ar[d]         &E_2^{-1,1} \ar[r] \ar[d] &k(w) \ar[r] \ar@{=}[d]   & 0    \\
0\ar[r] &\mathcal{I}_p \ar[r]                  &\mathcal{D}\ar[r]        &k(w) \ar[r]          & 0    \\
}
\]
Since 
$E_2^{-1,1}$ can not admit $\mathcal{I}_p$ as a quotient by Lemma~\ref{very often},
the bottom row of the diagram above does not split.
Hence 
$p=w$ and 
$\mathcal{D}$ is isomorphic to 
$\mathcal{O}_{\mathbb{P}^2}$
by Lemma~\ref{DhaO}.
The definition of $\mathcal{D}$ induces the following exact sequence
\[0\to \Coker(\gamma_2)\to E_2^{-1,1}\to \mathcal{O}\to 0.
\]
If $\gamma_2=0$, then $\Coker(\gamma_2)\cong\Omega_{\mathbb{P}^2}(1)$,
and thus $E_2^{-1,1}\cong \Omega_{\mathbb{P}^2}(1)\oplus \mathcal{O}$,
but this is absurd
by Lemma~\ref{very often}.
Therefore $\gamma_2\neq 0$,
and it follows from Lemma~\ref{property of Coker(gammai)/tor}
that $\Coker(\gamma_2)/\tors$ 
is isomorphic to $\mathcal{I}_W(d)$
for some $0$-dimensional closed subscheme $W$ of $\mathbb{P}^2$
and that the possible values of the pair $(\length W,d)$ are $(7,2)$, $(3,1)$, or $(1,0)$.
The sequence above induces
an
exact sequence
\[0\to \mathcal{I}_W(d)\to E_2^{-1,1}/\tors\to \mathcal{O}\to 0.\]
Now Lemma~\ref{conicpassing5points} implies that there exists a smooth rational curve $C$ such that 
the $d$-th twist $\mathcal{I}_W\cdot\mathcal{O}_C(d)$
of the inverse image ideal sheaf $\mathcal{I}_W\cdot\mathcal{O}_C$
is a negative degree line bundle.
This implies that $E_2^{-1,1}|_C$ admits a negative degree line bundle as a direct summand, which contradicts 
Lemma~\ref{very often}.
Therefore the case $e_{0,1}=1$ can not happen.

Suppose that $e_{0,1}=0$.
Since $\beta_1\neq 0$ by Lemma~\ref{property of nu_2}
and $\gamma_1=0$ by assumption,
the exact sequence~(\ref{exactseqForc2=9}) shows that $E_2^{-2,1}\cong \mathcal{O}(-3)$ and that $\mathcal{K}\cong \Coker(\beta_1)\cong \mathcal{I}_p$.
Since the sequence
\[0\to \mathcal{I}_p\to E_2^{-1,1}\to k(w)\to 0
\]
does not split by Lemma~\ref{very often},
we see that
$p=w$ and that 
\[E_2^{-1,1}\cong \mathcal{O}_{\mathbb{P}^2}\]
by Lemma~\ref{DhaO}.
Note here that 
$E_2^{0,0}\cong \mathcal{O}^{\oplus r}$
by (\ref{E_2^00 exact sequence in dim 2Forc2=9}).
Hence we obtain the desired exact sequences 
(\ref{E_300EssentiallyTangent-1}) and (\ref{EhasOasaQuotient})
by 
(\ref{E_3^{0,0}definition}) and (\ref{E_2^{-1,1}quotient}).
Finally 
the sequence (\ref{EhasOasaQuotient}) does not split
since $H^0(E_3^{0,0})\cong H^0(\mathcal{E})$ by (\ref{RRonP2(0)c2=9}) and (\ref{E_300EssentiallyTangent-1}).
\end{proof}

Suppose that $\gamma_1$ has $\beta_1$ as a factor.
We shall give a resolution of $\mathcal{E}$ in terms of a full strong exceptional collection
$(\mathcal{O}(-3),\mathcal{O}(-2),\mathcal{O}(-1))$ of line bundles on $\mathbb{P}^2$.
By composing the Euler exact sequence and the dual of that,
we obtain the following exact sequence
\begin{equation}\label{ResolutionOfO}
0\to \mathcal{O}(-3)\to \mathcal{O}(-2)^{\oplus 3}\to \mathcal{O}(-1)^{\oplus 3}\to \mathcal{O}\to 0.
\end{equation}
Hence we obtain the following exact sequence
\[0\to \mathcal{O}(-3)^{\oplus r}\to \mathcal{O}(-2)^{\oplus 3r}\to \mathcal{O}(-1)^{\oplus 3r}\to \mathcal{O}^{\oplus r}\to 0.\]
The exact sequence (\ref{E_300EssentiallyTangent-1}) then induces the following exact sequence
\begin{equation}\label{longResolutionOfE300}
0\to \mathcal{O}(-3)^{\oplus r}\to \mathcal{O}(-2)^{\oplus 3r}\oplus \mathcal{O}(-3)\to \mathcal{O}(-1)^{\oplus 3r}\to E_3^{0,0}\to 0.
\end{equation}
The sequence (\ref{EhasOasaQuotient}) together with (\ref{longResolutionOfE300}) and (\ref{ResolutionOfO})
induces the following
resolution of $\mathcal{E}$:
\[0\to \mathcal{O}(-3)^{\oplus r+1}\xrightarrow{d_2} \mathcal{O}(-2)^{\oplus 3r+3}\oplus \mathcal{O}(-3)
\to \mathcal{O}(-1)^{\oplus 3r+3}\to \mathcal{E}\to 0.\]
Since $h^1(\mathcal{E})=0$ by assumption, the linear map $H^2(d_2)$ is surjective.
Hence the composite of $d_2$ and the projection $\mathcal{O}(-2)^{\oplus 3r+3}\oplus \mathcal{O}(-3)\to \mathcal{O}(-3)$
is non-zero and thus a surjection.
Therefore the resolution above is reduced to 
that in case (17) of Theorem~\ref{c_1=3c_2<8}.
This completes the proof of Theorem~\ref{c_1=3c_2<8} for the case $n=2$, $c_2=9$, and $h^0(\mathcal{E})=0$.

Here we give the following two remarks about the property of $E_2^{-1,1}$ in 
(\ref{E2-11noResolutionInCase18}),
but they do not effect the rest of the proof at all,
so the reader may skip them.
\begin{rmk}
Since $\mathcal{O}(1)^{\oplus 6}$ is globally generated of rank six, 
a general morphism from 
$\mathcal{O}(-2)^{\oplus 4}$ to $\mathcal{O}(-1)^{\oplus 6}$
is a subbundle morphism, and hence its cokernel is a vector bundle.
On the other hand, $E_2^{-1,1}$
is not a vector bundle;
indeed, 
since $E_2^{-1,1}$ lies also in (\ref{rank2E2-11}),
we see $c_1(E_2^{-1,1})=2$ and $c_2(E_2^{-1,1})=7$,
and if $E_2^{-1,1}$ were a vector bundle, it would be a nef vector bundle, so that $c_2(E_2^{-1,1})\leq c_1(E_2^{-1,1})^2=4$,
which is absurd.
Therefore the morphism $\mathcal{O}(-2)^{\oplus 4}\to \mathcal{O}(-1)^{\oplus 6}$ in (\ref{E2-11noResolutionInCase18})
is not a general morphism.
\end{rmk}
Recall that (\ref{E2-11noResolutionInCase18}) comes from the exact sequence (\ref{(18)noKagikaNoMoto})
with $e_{0,1}=1$:
\[
0\to \mathcal{O}(-2)^{\oplus 2}
\xrightarrow{\mu_2}
\Omega_{\mathbb{P}^2}(1)^{\oplus 2}
\to E_2^{-1,1}\to 0.
\]
\begin{rmk}
We have the following commutative diagram with exact rows
\[
\xymatrix{
0\ar[r] &\mathcal{O}(-2)^{\oplus 2}\ar[r]^{\mu_2}\ar@{=}[d]                   &\Omega_{\mathbb{P}^2}(1)^{\oplus 2}\ar[r]\ar[d]_{[1,0]}         &E_2^{-1,1}\ar[r]\ar[d]         & 0    \\
0\ar[r] &\mathcal{O}(-2)^{\oplus 2}\ar[r]^{[\alpha_{11},\alpha_{12}]} &\Omega_{\mathbb{P}^2}(1)\ar[r]                          &\Coker([\alpha_{11},\alpha_{12}])\ar[r]  & 0    
}
\]
and an exact sequence
\[
0\to \Omega_{\mathbb{P}^2}(1)\to E_2^{-1,1}\to \Coker([\alpha_{11},\alpha_{12}])\to 0.
\]
Since $\alpha_{11}$ and $\alpha_{12}$ are linearly independent
and so are $\alpha_{11}$ and $\alpha_{21}$,
there exist no elements $\lambda \in K$ and $\lambda_{ij}\in K$ $(1\leq i,j\leq 2)$ such that 
\[
\begin{bmatrix}
\alpha_{11}&\alpha_{12}\\
\alpha_{21}&\alpha_{22}
\end{bmatrix}
\begin{bmatrix}
\lambda_{11}&\lambda_{12}\\
\lambda_{21}&\lambda_{22}
\end{bmatrix}
=
\begin{bmatrix}
1\\
\lambda
\end{bmatrix}
\begin{bmatrix}
\alpha_{11}&\alpha_{12}
\end{bmatrix}.
\]
Hence no splitting injection ${}^t[1,\lambda]$ ($\lambda\in K$) of the projection $[1,0]$
induces a splitting injection of 
the surjection $E_2^{-1,1}\to \Coker([\alpha_{11},\alpha_{12}])$.
\end{rmk}

\subsection{The sheaf $E_2^{-1,1}$ in (\ref{rank2E2-11}) is torsion-free}\label{proofoftorsion-free}
We shall apply 
the following lemma in \S\ref{Proof for the case n=2 and h^1(E)>0}.
\begin{lemma}\label{torsion-free}
The sheaf $E_2^{-1,1}$ in (\ref{rank2E2-11}) is a torsion-free sheaf of rank two
with $c_1(E_2^{-1,1})=2$ and $c_2(E_2^{-1,1})=7$.
\end{lemma}
\begin{proof}
Denote by $\mathcal{F}$ the cokernel of the composite of the two inclusions
$\mathcal{O}(1)\to \mathcal{O}(1)\oplus\mathcal{O}^{\oplus r-3}$
and $\mathcal{O}(1)\oplus\mathcal{O}^{\oplus r-3}\to \mathcal{E}$.
Then $c_1(\mathcal{F})=2$ and $c_2(\mathcal{F})=7$.
Moreover $\mathcal{F}$ fits in the following exact sequence
\[0\to \mathcal{O}^{\oplus r-3}\to \mathcal{F}\to E_2^{-1,1}\to 0.\]
Let $(e_1,e_2,\dots, e_{r-3})$ be the free basis of $\mathcal{O}^{\oplus r-3}$.
For $0\leq s\leq r-3$, denote by $\mathcal{O}^{\oplus s}$ the submodule generated by the set
$\{e_i|i\leq s\}$ in $\mathcal{O}^{\oplus r-3}$
and by $\mathcal{F}_s$ the quotient of $\mathcal{F}$ by $\mathcal{O}^{\oplus s}$.
We have the following exact sequence
\[0\to \mathcal{O}\to\mathcal{F}_s\to \mathcal{F}_{s+1}\to 0.\]
Note that $\mathcal{F}_0=\mathcal{F}$ and that $\mathcal{F}_{r-3}=E_2^{-1,1}$.

We shall show that $\mathcal{F}_s$ is torsion-free for all $s$ $(0\leq s\leq r-3)$
by induction on $s$.
Now consider the case $s=0$.
Since we have (\ref{first assumption}) and $\mathcal{E}$ is locally free, 
the singular locus of $\mathcal{F}$ has codimension $\geq 2$.
Hence it follows from \cite[Lemma 5.4]{resolution} that $\mathcal{F}$ is torsion-free;
thus the claim holds for the case $s=0$.

Suppose that $\mathcal{F}_s$ is torsion-free. We shall show that $\mathcal{F}_{s+1}$ is torsion-free
by contradiction.
For simplicity, by changing the symbols, we denote $\mathcal{F}_s$ by $\mathcal{F}$
and $\mathcal{F}_{s+1}$ by $\mathcal{F}_+$,
and assume that $\mathcal{F}$ is a torsoin-free sheaf with $c_1(\mathcal{F})=2$ and $c_2(\mathcal{F})=7$,
that $\mathcal{F}$ can not admit a negative degree line bundle on a line or a conic as a quotient,
and that $\mathcal{F}_+$ is not torsion-free.
Let $\mathcal{F}^{\vee\vee}$ be the double dual of $\mathcal{F}$.
Then $\mathcal{F}^{\vee\vee}$ is a nef vector bundle with first Chern class two 
by Lemma~\ref{doubledualoftorsionfreequotientofnefvbonSmoothsurfaceisanefvb}.
Denote by $\mathcal{Q}$ the cokernel of the inclusion $\mathcal{F}\to \mathcal{F}^{\vee\vee}$.
Then we have 
\begin{equation}\label{lengthQ}
\length \mathcal{Q}=c_2(\mathcal{F})-c_2(\mathcal{F}^{\vee\vee})=7-c_2(\mathcal{F}^{\vee\vee}).
\end{equation}

Denote by $\alpha$ the composite of the two inclusions $\mathcal{O}\to \mathcal{F}$
and $\mathcal{F}\to \mathcal{F}^{\vee\vee}$,
and 
let $\mathcal{F}^{\vee\vee}/\mathcal{O}$ be the cokernel of $\alpha$.
If $h^0(\mathcal{F}^{\vee\vee}(-1))=0$, then $\mathcal{F}^{\vee\vee}/\mathcal{O}$ is torsion-free by \cite[Lemma 5.4]{resolution}.
Since $\mathcal{F}_{+1}$ is a subsheaf of $\mathcal{F}^{\vee\vee}/\mathcal{O}$
and it has  a non-zero torsion subsheaf, we infer that 
$h^0(\mathcal{F}^{\vee\vee}(-1))\neq 0$.
It then follows from \cite[Theorem 6.5]{resolution}
that $\mathcal{F}^{\vee\vee}$ satisfies one of the following:
\begin{enumerate}
\item[(1)] $\mathcal{F}^{\vee\vee}\cong \mathcal{O}(2)\oplus\mathcal{O}^{\oplus f-1}$;
\item[(2)] $\mathcal{F}^{\vee\vee}\cong \mathcal{O}(1)^{\oplus 2}\oplus\mathcal{O}^{\oplus f-2}$;
\item[(3)] $\mathcal{F}^{\vee\vee}$ fits in an exact sequence
\[0\to\mathcal{O}(-1)\to \mathcal{O}(1)\oplus\mathcal{O}^{\oplus f}\to \mathcal{F}^{\vee\vee}\to 0.\]
\end{enumerate}
In particular, we see that $c_2(\mathcal{F}^{\vee\vee})\leq 2$. Therefore $\length \mathcal{Q}\geq 5$
by (\ref{lengthQ}).
Since $\mathcal{F}$ does not admit a negative degree line bundle on a line as a quotient,
we infer that $\length \mathcal{Q}|_L\leq 2$ for any line $L$.
Similarly we infer that $\length \mathcal{Q}|_C\leq 4$ for any smooth conic $C$
by the assumption on $\mathcal{F}$.
If $\mathcal{Q}$ is generated by a single element, then $\mathcal{Q}$ is isomorphic to 
the structure sheaf $\mathcal{O}_Z$ of 
the scheme-theoretic support $Z$ of $\mathcal{Q}$.
Lemma~\ref{conicpassing5points} then shows the existence of a conic $C$ 
such that $\length Z\cap C\geq 5$.
Hence $\length \mathcal{Q}|_C=\length \mathcal{O}_Z|_C\geq 5$, which is a contradiction.
In the following, we assume that $\mathcal{Q}$ is not generated by a single element.

Suppose that the case (1) holds. 
Let $\mathcal{Q}_1$ be the image of the composite of the inclusion $\mathcal{O}(2)\to \mathcal{F}^{\vee\vee}$
and the surjection $\mathcal{F}^{\vee\vee}\to \mathcal{Q}$.
Denote by $\mathcal{Q}_2$ the quotient of $\mathcal{Q}$ by $\mathcal{Q}_1$.
Then we have a surjection $\mathcal{O}^{\oplus f-1}\to \mathcal{Q}_2$.
Let $\mathcal{G}$ be the kernel of this surjection $\mathcal{O}^{\oplus f-1}\to \mathcal{Q}_2$.
Then we have a surjection $\mathcal{F}\to \mathcal{G}$.
Suppose that $\mathcal{Q}_2\neq 0$.
Then 
there exists a line $L$ such that 
$\length \mathcal{Q}_2|_L\geq 1$.
This implies that 
the kernel of the surjection $\mathcal{O}_L^{\oplus f-1}\to \mathcal{Q}_2|_L$,
i.e., a quotient of 
$\mathcal{G}|_L$ 
has negative degree.
Since we have a surjection 
$\mathcal{F}|_L\to \mathcal{G}|_L$,
this contradicts the assumption on $\mathcal{F}$. 
Therefore $\mathcal{Q}_2=0$; thus 
$\mathcal{Q}_1\cong \mathcal{Q}$.
Hence we obtain a surjection $\mathcal{O}(2)\to \mathcal{Q}\cong\mathcal{Q}(2)$,
and thus $\mathcal{Q}$ is generated by a single element.
This contradicts the assumption.
Hence the case (1) does not happen.

Suppose that 
we are in case (2) or (3).
We have an inclusion $\mathcal{O}(1)\to\mathcal{F}^{\vee\vee}$.
Denote by $\mathcal{F}'$ 
the cokernel of the inclusion.
If $\mathcal{F}^{\vee\vee}$ lies in the case (2),
then $\mathcal{F}'\cong \mathcal{O}(1)\oplus \mathcal{O}^{\oplus f-2}$.
If $\mathcal{F}^{\vee\vee}$ lies in the case (3),
then $\mathcal{F}'$ is a torsion-free sheaf with $c_1(\mathcal{F}')=1$
and $c_2(\mathcal{F}')=1$. 
Hence its double dual $\mathcal{F}'^{\vee\vee}$ is either $\mathcal{O}(1)\oplus \mathcal{O}^{\oplus f-2}$
or $T_{\mathbb{P}^2}(-1)\oplus \mathcal{O}^{\oplus f-3}$.
If $\mathcal{F}'^{\vee\vee}\cong T_{\mathbb{P}^2}(-1)\oplus \mathcal{O}^{\oplus f-3}$, 
then $c_2(\mathcal{F}')=1=c_2(\mathcal{F}'^{\vee\vee})$;
thus $\mathcal{F}'$ and $\mathcal{F}'^{\vee\vee}$ are isomorphic. In particular $\mathcal{F}'$ is locally free.
If $\mathcal{F}'^{\vee\vee}\cong \mathcal{O}(1)\oplus \mathcal{O}^{\oplus f-2}$, then
$\mathcal{F}'$ is 
$\mathcal{I}_q(1)\oplus \mathcal{O}^{\oplus f-2}$ for some point $q$.
Let 
$\mathcal{H}_1$ and 
$\mathcal{Q}_1$ be 
respectively the kernel and 
the image 
of the composite of the inclusion $\mathcal{O}(1)\to \mathcal{F}^{\vee\vee}$
and the projection $\mathcal{F}^{\vee\vee}\to \mathcal{Q}$.
Denote by $\mathcal{G}$ the quotient of $\mathcal{F}$ by $\mathcal{H}_1$
and by $\mathcal{Q}_2$ the quotient of $\mathcal{Q}$ by $\mathcal{Q}_1$.
Note that $\mathcal{H}_1\cong \mathcal{I}_{Z_1}(1)$ for some $0$-dimensional closed subscheme $Z_1$
and that $\mathcal{Q}_1\cong \mathcal{O}_{Z_1}$.
Then we have the following commutative diagram with exact rows and columns.
\[
\xymatrix{
       &0\ar[d]                         &0\ar[d]                           &0\ar[d]                  &     \\
0\ar[r]&\mathcal{I}_{Z_1}(1)\ar[r]\ar[d]&\mathcal{O}(1)\ar[r]\ar[d]        &\mathcal{O}_{Z_1}\ar[r]\ar[d]&0    \\
0\ar[r]&\mathcal{F}         \ar[r]\ar[d]&\mathcal{F}^{\vee\vee}\ar[r]\ar[d]&\mathcal{Q}\ar[r]\ar[d]  &0    \\
0\ar[r]&\mathcal{G}         \ar[d]\ar[r]&\mathcal{F}'\ar[d]\ar[r]          &\mathcal{Q}_2\ar[r]\ar[d]&0     \\
       &0                               &0                                 &0                        &      \\
}
\]

We claim that $\length \mathcal{Q}_2|_L\leq 1$ for any line $L$; this follows
from the fact that $\mathcal{G}|_L$ can not admit a line bundle of negative degree
as a quotient,
but a careful argument is needed if $\mathcal{F}'\cong \mathcal{I}_q(1)\oplus \mathcal{O}^{\oplus f-1}$
and $q\in L$;
suppose that $\length \mathcal{Q}_2|_L\geq 1$ for a line $L$ passing through $q$. 
Note here that $\mathcal{I}_q(1)|_L$ is isomorphic to $\mathcal{O}_L\oplus k(q)$.
Since $\mathcal{G}|_L$ does not admit a line bundle of negative degree as a quotient,
we infer that the composite of the inclusion $k(q)\to \mathcal{I}_q(1)|_L\to \mathcal{F}'|_L$
and the surjection $\mathcal{F}'|_L\to \mathcal{Q}_2|_L$ must be surjective.
Hence $\length \mathcal{Q}_2|_L=1$.  Therefore the claim holds,

The claim above implies, in particular, that the number of minimal (non-zero) generators of $\mathcal{Q}_{2,p}$ at each point $p$
is at most one.
Hence it follows from the claim above that $\mathcal{Q}_2$ is isomorphic to the residue field $k(p)$ of some point $p$
unless $\mathcal{Q}_2$ is zero.

If $\mathcal{Q}_2=0$, then 
$\mathcal{Q}\cong \mathcal{Q}_1\cong \mathcal{O}_{Z_1}$,
and this case does not happen
by our assumption.
Suppose that $\mathcal{Q}_2\cong k(p)$.
Then $\length Z_1\geq 4$.
If $p\notin Z_1$, then the support $Z$ of $\mathcal{Q}$ is the disjoint union of $Z_1$ and $p$,
and we see that $\mathcal{Q}\cong \mathcal{O}_Z$; this case is also ruled out.
In the following, we assume that $p\in Z_1$.

We claim that $\length Z_1\cap L\leq 2$ for any line $L$.
If $\mathcal{O}_{Z_1}|_L\to \mathcal{Q}|_L$ is injective,
then the claim holds since $\length \mathcal{Q}|_L\leq 2$.
Suppose that $\mathcal{O}_{Z_1}|_L\to \mathcal{Q}|_L$ is not injective.
Then $p\in L$.
Since $\mathcal{O}_L(1)\to \mathcal{F}^{\vee\vee}|_L$ is injective, we have the following exact sequence
\[
\kappa\to \mathcal{O}_{Z_1}|_L\to \mathcal{Q}|_L\to k(p)\to 0,
\]
where $\kappa$ denotes the kernel of the morphism $\mathcal{G}|_L\to \mathcal{F}'|_L$.
Since we have the following commutative diagram with exact rows,
\[
\xymatrix{
0\ar[r]&\mathcal{G}(-1)\ar[r]\ar[d]    &\mathcal{G}\ar[r]\ar[d]   &\mathcal{G}|_L\ar[r]\ar[d]&0    \\
0\ar[r]&\mathcal{F}'(-1)\ar[r]&\mathcal{F}'\ar[r]&\mathcal{F}'|_L\ar[r]             &0     \\
}
\]
we see that 
$\kappa\cong k(p)$.
Since $\mathcal{O}_{Z_1}|_L\to \mathcal{Q}|_L$ is not injective,
this implies that there exists the following exact sequence 
\[
0\to k(p)\to \mathcal{O}_{Z_1}|_L\to \mathcal{Q}|_L\to k(p)\to 0.
\]
Therefore $\length Z_1\cap L=\length \mathcal{O}_{Z_1}|_L=\length \mathcal{Q}|_L\leq 2$.

We claim here that the torsion subsheaf of  $\mathcal{F}^{\vee\vee}/\mathcal{O}$ is isomorphic to $\mathcal{O}_L(1)$
for some line $L$ containing $p$.
First note that if $\alpha$ factors through some subsheaf $\mathcal{O}(1)$
then the claim holds.
Suppose that $\mathcal{F}^{\vee\vee}\cong \mathcal{O}(1)^{\oplus 2}\oplus \mathcal{O}^{\oplus f-2}$.
If $\alpha$
does not factor through $\mathcal{O}(1)^{\oplus 2}$,
then $\mathcal{F}^{\vee\vee}/\mathcal{O}$ becomes locally free. This contradicts that $\mathcal{F}_+$ has a non-zero tosion subsheaf.
Therefore $\alpha$
factors through $\mathcal{O}(1)^{\oplus 2}$.
Since $\mathcal{F}^{\vee\vee}/\mathcal{O}$ can not be torsion-free, we see that the induced morphism 
$\mathcal{O}\to \mathcal{O}(1)^{\oplus 2}$ factors through a direct summand $\mathcal{O}(1)$.
Therefore the claim holds in this case.
Suppose that we are in case (3).
Then $\mathcal{F}'$ is either $T_{\mathbb{P}^2}(-1)\oplus \mathcal{O}^{\oplus f-3}$
or $\mathcal{I}_p(1)\oplus \mathcal{O}^{\oplus f-2}$.
If $\alpha$
does not factor through $\mathcal{O}(1)$,
then we have an injection $\mathcal{O}\to \mathcal{F}'$.
If $\mathcal{F}'=T_{\mathbb{P}^2}(-1)\oplus \mathcal{O}^{\oplus f-3}$,
then it follows from \cite[Lemma 5.4]{resolution} that 
$\mathcal{F}'/\mathcal{O}$ is torsion-free; thus $\mathcal{F}^{\vee\vee}/\mathcal{O}$ is torsion-free.
This is a contradiction.
If $\mathcal{F}'=\mathcal{I}_p(1)\oplus \mathcal{O}^{\oplus f-2}$,
the cokernel of the morphism $\mathcal{O}\to \mathcal{F}'$ is isomorphic to $\mathcal{O}_L\oplus \mathcal{O}^{\oplus f-2}$
for some line $L$ containing $p$, since $\mathcal{F}^{\vee\vee}/\mathcal{O}$ is not torsion-free.
Now we have the following commutative diagram with exact rows.
\[
\xymatrix{
0\ar[r]&\mathcal{F}_+\ar[r]\ar[d]    &\mathcal{F}^{\vee\vee}/\mathcal{O}\ar[r]\ar[d]   &\mathcal{Q}\ar[r]\ar[d]&0    \\
0\ar[r]&\mathcal{G}/\mathcal{O}\ar[r]&\mathcal{O}_L\oplus\mathcal{O}^{\oplus f-2}\ar[r]&k(p)\ar[r]             &0     \\
}
\]
We see that $(\mathcal{G}/\mathcal{O})|_L$ admits a negative degree line bundle as a quotient. This is a contradiction.
Therefore $\alpha$ factors through the subsheaf $\mathcal{O}(1)$ 
and the claim also holds in this case.

The claim above implies that 
we have the following commutative diagram with exact rows and columns
\[
\xymatrix{
0\ar[r]&\mathcal{I}_{Z_1}\cdot\mathcal{O}_L(1)\ar[r]\ar[d]&\mathcal{O}_L(1)\ar[r]\ar[d]            &\mathcal{O}_{Z_1}\ar[r]\ar[d]&0    \\
0\ar[r]&\mathcal{F}_+\ar[r]                               &\mathcal{F}^{\vee\vee}/\mathcal{O}\ar[r]&\mathcal{Q}\ar[r]            &0    
}
\]
where $\mathcal{I}_{Z_1}\cdot\mathcal{O}_L$ denotes the inverse image ideal sheaf.
In particular, we infer that $Z_1$ lies on the line $L$. Since $\length Z_1\geq 4$, this contradicts
the claim that $\length Z_1\cap L\leq 2$.
Therefore we conclude that $\mathcal{F}_+$ is torsion-free.
\end{proof}

\subsection{Proof for the case $n=2$
and 
$h^1(\mathcal{E})>0$}\label{Proof for the case n=2 and h^1(E)>0}
Set $s=h^1(\mathcal{E})$.
Then we have the following exact sequence 
\[0\to \mathcal{E}\to \mathcal{E}_0\to \mathcal{O}^{\oplus s}\to 0\]
which induces an isomorphism $H^0(\mathcal{O}^{\oplus s})\cong H^1(\mathcal{E})$.
It follows from \cite[Theorem 6.2.12 (ii)]{MR2095472} 
that $\mathcal{E}_0$ is a nef vector bundle of rank $r+s$ with first Chern class three,
second Chern class nine, and $h^1(\mathcal{E}_0)=0$.
Since $H^0(\mathcal{E})\cong H^0(\mathcal{E}_0)$, the image $\mathcal{E}_1$ of 
the evaluation map $H^0(\mathcal{E}_0)\otimes\mathcal{O}\to \mathcal{E}_0$ is contained in $\mathcal{E}$.
Denote by $\mathcal{F}$ the quotient of $\mathcal{E}$ by $\mathcal{E}_1$.
By abuse of notation, we also denote by $E_2^{-1,1}$ 
the cokernel of the evaluation map $H^0(\mathcal{E}_0)\otimes\mathcal{O}\to \mathcal{E}_0$.
Then we have the following commutative diagram with exact rows.
\[
\xymatrix{
0\ar[r]&\mathcal{E}      \ar[r]\ar[d]&\mathcal{E}_0\ar[r]\ar[d]&\mathcal{O}^{\oplus s}\ar[r]\ar@{=}[d]&0    \\
0\ar[r]&\mathcal{F}      \ar[r]      &E_2^{-1,1}\ar[r]         &\mathcal{O}^{\oplus s}\ar[r]          &0     
}
\]
Since we have a surjection $E_2^{-1,1}\to \mathcal{O}^{\oplus s}$ with $s\geq 1$,
Remark~\ref{cokernelOf16and17} 
and Lemmas~\ref{gamma1gabeta1woInshiNiMotu} and \ref{torsion-free}
imply that $E_2^{-1,1}$ is either $\mathcal{O}$ or a torsion-free sheaf of rank two with $c_1(E_2^{-1,1})=2$
and $c_2(E_2^{-1,1})=7$.
If $E_2^{-1.1}$ is the latter,
then $s=1$ and $\mathcal{F}\cong \mathcal{I}_Z(2)$ for some $0$-dimensional closed subscheme $Z$ of length seven.
Lemma~\ref{conicpassing5points} then shows that $\mathcal{E}$ admits a negative degree quotient, which contradicts 
that $\mathcal{E}$ is nef.
Hence $E_2^{-1,1}\cong \mathcal{O}$,
and thus $\mathcal{F}=0$, $\mathcal{E}_1=\mathcal{E}$,
and $\mathcal{E}_0$ and $\mathcal{E}_1$ fit in the following exact sequences
\begin{gather*}
0\to \mathcal{O}(-3)\to \mathcal{O}^{\oplus r+1}\to \mathcal{E}_1\to 0;\\
0\to \mathcal{E}_1\to \mathcal{E}_0\to \mathcal{O}\to 0
\end{gather*}
as in (\ref{E_300EssentiallyTangent-1}) and (\ref{EhasOasaQuotient}).
Hence $\mathcal{E}$ fits in an exact sequence
\[
0\to \mathcal{O}(-3)\to \mathcal{O}^{\oplus r+1}\to \mathcal{E}\to 0.
\]
This is the case (15) of Theorem~\ref{c_1=3c_2<8}, where $n=2$.

\subsection{Proof for the case $n\geq 3$}
What we have to show in case $n\geq 3$ is the following lemma.
\begin{lemma}\label{Oukyushochi}
If $n\geq 3$, then 
$c_3=27$ and 
$h^0(\mathcal{E}(-1))\leq 1$.
Moreover $\mathcal{E}$ satisfies one of the following:
\item[$(1)$] $n\geq 3$, $h^0(\mathcal{E}(-1))=0$, $h^{n-3}(\mathcal{E}(2-n))=0$, and $\mathcal{E}$ lies in the case $(15)$ of Theorem~\ref{c_1=3c_2<8};
\item[$(2)$] $n\geq 3$ and $h^0(\mathcal{E}(-1))=1$;
\item[$(3)$] $n\geq 4$, $h^0(\mathcal{E}(-1))=0$, and $h^{n-3}(\mathcal{E}(2-n))=1$.
\end{lemma}
\begin{proof}
We shall first show that $h^0(\mathcal{E}(-1))\leq 1$ by induction on $n\geq 3$.
It follows from (\ref{first assumption})
that $h^0(\mathcal{E}(-1))\leq h^0(\mathcal{E}|_H(-1))$ for any hyperplane $H$ in $\mathbb{P}^n$.
Note here that $h^0(\mathcal{E}|_{H}(-1))\leq 1$ by induction hypothesis if $n\geq 4$
and by what we have seen in \S\ref{Proof for the case n=2 and h^1(E)=0}
and \S\ref{Proof for the case n=2 and h^1(E)>0} if $n=3$.
Hence $h^0(\mathcal{E}(-1))\leq 1$.

Recall here that $\mathcal{E}$ is globally generated 
if $h^1(\mathcal{E}(-1))=0$
and $\mathcal{E}|_H$ is globally generated for any hyperplane $H$ in $\mathbb{P}^n$
by \cite[Lemma 3]{pswnef}.
In order to show that $\mathcal{E}$
lies in the case (15) of Theorem~\ref{c_1=3c_2<8},
it is enough to show that $h^0(\mathcal{E})=r+1$
and that $\mathcal{E}$ is globally generated.

Suppose that $n=3$. As we have seen in \S\ref{Proof for the case n=2 and h^1(E)>0},
we have $h^1(\mathcal{E}|_H)\leq 1$.

Suppose furthermore that $h^0(\mathcal{E}(-1))=1$. 
Then we get the case (2) of Lemma~\ref{Oukyushochi} in case $n=3$.
Note here that we have $c_3=27$ in this case. Indeed,
the argument above shows that $h^0(\mathcal{E}|_H(-1))=1$,
and this implies that $h^1(\mathcal{E}|_{H})=0$
as we have also seen in \S\ref{Proof for the case n=2 and h^1(E)=0}
and \S\ref{Proof for the case n=2 and h^1(E)>0}.
It then follows from (\ref{first vanishing}) and  (\ref{H^2vanishing}) 
that $h^q(\mathcal{E}(-1))=0$ for $q\geq 2$.
Now we have  
\[
1\geq 1-h^1(\mathcal{E}(-1))=
\chi(\mathcal{E}(-1))=(c_3-25)/2
\]
by (\ref{RRonP3(-1)}).
Note here that $c_3\geq 27$ by (\ref{selfintersection}).
Hence we infer that $c_3=27$ and that $h^1(\mathcal{E}(-1))=0$.

Suppose furthermore that $h^0(\mathcal{E}(-1))=0$.
It follows from $h^1(\mathcal{E}|_H)\leq 1$
and (\ref{first vanishing})  that $h^2(\mathcal{E}(-1))\leq 1$
and that equality holds if and only if $h^1(\mathcal{E}|_H)=1$.
Moreover we have $h^3(\mathcal{E}(-1))=0$
by (\ref{first vanishing}) and (\ref{H^2vanishing}).
It then follows from (\ref{RRonP3(-1)}) that 
\[
1\geq -h^1(\mathcal{E}(-1))+h^2(\mathcal{E}(-1))=
\chi(\mathcal{E}(-1))=(c_3-25)/2.
\]
Since $c_3\geq 27$ by (\ref{selfintersection}),
this implies that $c_3=27$, that $h^2(\mathcal{E}(-1))=1$, and that $h^1(\mathcal{E}(-1))=0$.
Hence $h^1(\mathcal{E}|_H)=1$.
As we have seen in \S\ref{Proof for the case n=2 and h^1(E)>0},
this implies that $\mathcal{E}|_H$ is globally generated
and that $h^0(\mathcal{E}|_H)=r+1$.
Therefore $\mathcal{E}$ is globally generated and $h^0(\mathcal{E})=r+1$.
This is the case (1) of Lemma~\ref{Oukyushochi} in case $n=3$.

Suppose that $n\geq 4$.
If $h^0(\mathcal{E}(-1))=1$, we get the case (2) of Lemma~\ref{Oukyushochi}.
Suppose that $h^0(\mathcal{E}(-1))=0$.
We shall show that $h^{n-3}(\mathcal{E}(2-n))\leq 1$ by induction on $n\geq 4$.
Note here that $h^q(\mathcal{E}(3-n))=0$ for all $q>0$
by (\ref{first vanishing}).
Hence $h^{n-3}(\mathcal{E}(2-n))\leq h^{n-4}(\mathcal{E}|_H(3-n))\leq 1$
by induction hypothesis if $n\geq 5$ and by what we have shown, i.e., $h^0(\mathcal{E}|_H(-1))\leq 1$ if $n=4$.
If $h^{n-3}(\mathcal{E}(2-n))= 1$, we obtain the case (3) of Lemma~\ref{Oukyushochi}.

Suppose that $n\geq 4$, that $h^0(\mathcal{E}(-1))=0$, and that $h^{n-3}(\mathcal{E}(2-n))= 0$.
We claim here that $\mathcal{E}$ lies in the case (15) of Theorem~\ref{c_1=3c_2<8}.
We proceed by induction 
not only on $n\geq 4$ but also on $n\geq 3$;
if $n=3$, then the two conditions $h^0(\mathcal{E}(-1))=0$ and $h^{n-3}(\mathcal{E}(2-n))= 0$
become the same and $\mathcal{E}$ lies in the case (15) of Theorem~\ref{c_1=3c_2<8} as we have seen above.
Suppose now that $n\geq 4$. The assumption $h^{n-3}(\mathcal{E}(2-n))= 0$ implies that 
$h^{n-4}(\mathcal{E}|_H(3-n))= 0$ by (\ref{first vanishing}) if $n\geq 5$
and by the assumption $h^0(\mathcal{E}(-1))=0$ if $n=4$.
Hence we infer that $h^0(\mathcal{E}|_{L^3}(-1))=h^{n-3}(\mathcal{E}(2-n))=0$ for any linear subspace $L^3$
of dimension three in $\mathbb{P}^n$.
Therefore we see that $h^0(\mathcal{E}|_H(-1))\leq h^0(\mathcal{E}|_{L^3}(-1))=0$
by (\ref{first assumption}).
Now it follows from the induction hypothesis that 
$\mathcal{E}|_H$ lies in the case (15) of Theorem~\ref{c_1=3c_2<8}.
Thus $h^0(\mathcal{E}|_H)=r+1$ and 
$\mathcal{E}|_H$ is globally generated.
Moreover $h^1(\mathcal{E}(-1))=0$ by (\ref{first vanishing})
and $h^0(\mathcal{E}(-1))=0$ by assumption.
Hence $h^0(\mathcal{E})=r+1$ and $\mathcal{E}$ is globally generated.
\end{proof}

\section{Several remarks on Theorem~\ref{c_1=3c_2<8}}\label{RmksOnMainTheorem}

\begin{rmk}\label{Rmk for the case (7)}
The exact sequence 
in the case $(7)$ of Theorem~\ref{c_1=3c_2<8}
induces the following
\[
0\to T_{\mathbb{P}^3}(-2)\to \mathcal{O}(1)\oplus\mathcal{O}^{\oplus r+2}\to \mathcal{E}\to 0,
\]
and, dualizing this, 
we obtain the following exact sequence
\[
0\to \mathcal{E}^{\vee}\to \mathcal{O}(-1)\oplus\mathcal{O}^{\oplus r+2}\to \Omega_{\mathbb{P}^3}(2)\to 0.
\]
Note that the injection $H^0(\mathcal{E}^{\vee})\to H^0(\mathcal{O}(-1)\oplus\mathcal{O}^{\oplus r+2})$
induces a splitting injection
$\mathcal{O}\otimes H^0(\mathcal{E}^{\vee})
\to \mathcal{O}^{\oplus r+2}$
and that the composite of two splitting injection
$\mathcal{O}\otimes H^0(\mathcal{E}^{\vee})
\to \mathcal{O}^{\oplus r+2}$ and 
$\mathcal{O}^{\oplus r+2}\to \mathcal{O}(-1)\oplus\mathcal{O}^{\oplus r+2}$
is equal to the composite of $\mathcal{O}\otimes H^0(\mathcal{E}^{\vee})\to \mathcal{E}^{\vee}$ and 
$\mathcal{E}^{\vee}\to \mathcal{O}(-1)\oplus\mathcal{O}^{\oplus r+2}$.
Thus $\mathcal{O}\otimes H^0(\mathcal{E}^{\vee})\to \mathcal{E}^{\vee}$ is also a splitting injection.
Hence $\mathcal{E}^{\vee}\cong \mathcal{E}_0^{\vee}\oplus \mathcal{O}\otimes H^0(\mathcal{E}^{\vee})$ 
for some vector bundle $\mathcal{E}_0$ of rank $s=r-h^0(\mathcal{E}^{\vee})$.
Since $c_3(\mathcal{E}_0)=2\neq 0$, we infer that $s\geq 3$.
Note that $h^0(\mathcal{E}_0^{\vee})=0$ and that $\mathcal{E}_0^{\vee}$ fits in an exact sequence
\[
0\to \mathcal{E}_0^{\vee}\to \mathcal{O}(-1)\oplus\mathcal{O}^{\oplus s+2}\to \Omega_{\mathbb{P}^3}(2)\to 0.
\]
Since $h^0(\Omega_{\mathbb{P}^3}(2))=6$ by the Bott formula \cite[p.\ 8]{oss},
we see that 
$s\leq 4$.
Moreover $h^1(\mathcal{E}_0^{\vee})=4-s$. 
The image of $\mathcal{E}_0^{\vee}\to \mathcal{O}(-1)\oplus\mathcal{O}^{\oplus s+2}
\to \mathcal{O}(-1)$ is $\mathcal{I}_Z(-1)$ for some closed subscheme $Z$ in $\mathbb{P}^3$.

Suppose that $Z=\emptyset$. Let $\mathcal{F}^{\vee}$ be the kernel of the surjection $\mathcal{E}_0^{\vee}\to \mathcal{O}(-1)$.
Then $\mathcal{F}$ fits in an exact sequence
\[
0\to T_{\mathbb{P}^3}(-2)\to \mathcal{O}^{\oplus s+2}\to \mathcal{F}\to 0,
\]
and $\mathcal{F}$ is a nef vector bundle of rank $s-1$ with $c_1(\mathcal{F})=2$. 
Moreover, as we have seen in \cite[Remark 6.7]{resolution},
$\mathcal{F}\cong \Omega_{\mathbb{P}^3}(2)$ if $s=4$ 
and $\mathcal{F}\cong \mathcal{N}(1)$ if $s=3$ where $\mathcal{N}$ is a null correlation bundle on $\mathbb{P}^3$.
Hence
$\mathcal{E}$ is either $\mathcal{O}(1)\oplus \Omega_{\mathbb{P}^3}(2)\oplus \mathcal{O}^{\oplus r-4}$
or $\mathcal{O}(1)\oplus \mathcal{N}(1)\oplus \mathcal{O}^{\oplus r-3}$
if $Z=\emptyset$.
\end{rmk}

\begin{rmk}\label{Rmk for the case (8)}
Suppose that $\mathcal{E}$ is in the case $(8)$ of Theorem~$\ref{c_1=3c_2<8}$.
Then $\mathcal{E}$ has $\mathcal{O}^{\oplus r-4}$ as a subbundle;
let $\mathcal{E}_0$ be the quotient bundle $\mathcal{E}/\mathcal{O}^{\oplus r-4}$ of rank four.
In \cite[\S 6 III (a)]{MR3119690}, it is stated that $\mathcal{E}_0\cong \Omega_{\mathbb{P}^4}(2)$.
$($Therefore we see that $\mathcal{E}\cong \Omega_{\mathbb{P}^4}(2)\oplus \mathcal{O}^{\oplus r-4}$.$)$

For the sake of completeness, we give a different proof of this result in our context.
First note that $\mathcal{E}_0$ fits in an exact sequence
\[
0
\to 
T_{\mathbb{P}^4}(-3)
\to
\mathcal{O}(-1)^{\oplus 10}\to \mathcal{O}^{\oplus 10}\to \mathcal{E}_0\to 0.
\]
Therefore we obtain the following exact sequence
\[
0
\to \mathcal{E}_0^{\vee}(-1)
\to \mathcal{O}(-1)^{\oplus 10}\to \mathcal{O}^{\oplus 10}
\to \Omega_{\mathbb{P}^4}(2)
\to 0.
\]
We split this sequence into the following two exact sequences:
\begin{gather}
0
\to \mathcal{E}_0^{\vee}(-1)
\to \mathcal{O}(-1)^{\oplus 10}
\to \mathcal{G}
\to 0;\label{DefExactSeqOfGasCokernelinOmage(2)preciseRmk}\\
0
\to \mathcal{G}
\to \mathcal{O}^{\oplus 10}
\to \Omega_{\mathbb{P}^4}(2)
\to 0.\label{DefExactSeqOfGasKernelinOmage(2)preciseRmk}
\end{gather}
We claim here that the induced map 
$H^0(\mathcal{O}^{\oplus 10})\to H^0(\Omega_{\mathbb{P}^4}(2))$ is an isomorphism.
Since $h^0(\Omega_{\mathbb{P}^4}(2))=10$ by the Bott formula \cite[p.\ 8]{oss},
it is enough to show that the map is injective.
Suppose, to the contrary, that there exists a subbundle $\mathcal{O}\to \mathcal{O}^{\oplus 10}$
such that the composite $\mathcal{O}\to \mathcal{O}^{\oplus 10}\to \Omega_{\mathbb{P}^4}(2)$ is zero.
Then the subbundle morphism $\mathcal{O}\to \mathcal{O}^{\oplus 10}$ induces a subbundle morphism
$\mathcal{O}\to \mathcal{G}$; let $\mathcal{G}_0$ be the quotient bundle $\mathcal{G}/\mathcal{O}$.
The composite of 
the subbundle morphism $\mathcal{O}\to \mathcal{G}$ and the extension class in $\Ext^1(\mathcal{G},\mathcal{E}_0^{\vee}(-1))$ 
corresponding to (\ref{DefExactSeqOfGasCokernelinOmage(2)preciseRmk})
lies in 
$H^1(\mathcal{E}_0^{\vee}(-1))$,
and it gives rise to  an exact sequence
\begin{equation}\label{ExactSeqinOmage(2)preciseRmk}
0\to \mathcal{E}_0^{\vee}\to \mathcal{F}\to \mathcal{O}(1)\to 0.
\end{equation}
Then $\mathcal{F}$ is a vector bundle, and 
it
also fits in an exact sequence
\[0\to \mathcal{F}\to \mathcal{O}^{\oplus 10}\to \mathcal{G}_0(1)\to 0.
\]
Hence $\mathcal{F}^{\vee}$ is nef.
On the other hand, it follows from $(\ref{ExactSeqinOmage(2)preciseRmk})$
that $c_3(\mathcal{F}^{\vee})=-2$ since $c_2=4$ and $c_3=2$.
This contradicts the non-negativity of the Chern classes of nef vector bundles.
Therefore the claim holds.
Hence we may assume that the dual of  $(\ref{DefExactSeqOfGasKernelinOmage(2)preciseRmk})$
is nothing but the exact sequence induced by the two wedge $\wedge^2 (\mathcal{O}^{\oplus 5})\to \wedge^2(T_{\mathbb{P}^4}(-1))$
of the surjection in the Euler exact sequence 
\begin{equation}\label{Euler}
0\to \mathcal{O}(-1)\to \mathcal{O}^{\oplus 5}\to T_{\mathbb{P}^4}(-1)\to 0.
\end{equation}
In particular, $\mathcal{G}\cong \Omega_{\mathbb{P}^4}^2(2)$.
Thus the exact sequence $(\ref{DefExactSeqOfGasCokernelinOmage(2)preciseRmk})$ implies an exact sequence
\begin{equation}\label{SaigoExactSeqinOmage(2)preciseRmk}
0
\to \mathcal{E}_0^{\vee}
\to \mathcal{O}^{\oplus 10}
\to \Omega_{\mathbb{P}^4}^2(3)
\to 0.
\end{equation}
Next we claim that the induced map 
$H^0(\mathcal{O}^{\oplus 10})\to H^0(\Omega_{\mathbb{P}^4}^2(3))$ is an isomorphism.
Since $h^0(\Omega_{\mathbb{P}^4}^2(3))=10$ by the Bott formula,
it is enough to show that the map is injective.
Suppose, to the contrary, that there exists a subbundle $\mathcal{O}\to \mathcal{O}^{\oplus 10}$
such that the composite $\mathcal{O}\to \mathcal{O}^{\oplus 10}\to \Omega_{\mathbb{P}^4}^2(3)$ is zero.
Then the subbundle morphism $\mathcal{O}\to \mathcal{O}^{\oplus 10}$ induces a subbundle morphism
$\mathcal{O}\to \mathcal{E}_0^{\vee}$; let $\mathcal{E}_1^{\vee}$ be the quotient bundle $\mathcal{E}_0^{\vee}/\mathcal{O}$.
Then $\mathcal{E}_1^{\vee}$ fits in an exact sequence
\[
0
\to \mathcal{E}_1^{\vee}
\to \mathcal{O}^{\oplus 9}
\to \Omega_{\mathbb{P}^4}^2(3)
\to 0.
\]
Hence it follows from the Bott formula that $h^0(\mathcal{E}_1)=9$.
Since $h^0(\mathcal{E}_0)=10$, this implies that $\mathcal{E}_0\cong \mathcal{E}_1\oplus \mathcal{O}$.
Thus $c_4(\mathcal{E}_0)=0$, which however contradicts 
that $c_4=1$.
Therefore $H^0(\mathcal{O}^{\oplus 10})\to H^0(\Omega_{\mathbb{P}^4}^2(3))$ is an isomorphism,
and we conclude that the exact sequence $(\ref{SaigoExactSeqinOmage(2)preciseRmk})$
is nothing but the exact sequence induced by the two wedge $\wedge^2 (\mathcal{O}^{\oplus 5})\to \wedge^2(T_{\mathbb{P}^4}(-1))$
of the surjection in the Euler exact sequence $(\ref{Euler})$.
Therefore
$\mathcal{E}_0\cong \Omega_{\mathbb{P}^4}(2)$.
\end{rmk}

\begin{rmk}\label{TrautmannVetter}
Suppose that $n=4$ and that 
$\mathcal{E}$ fits in an exact sequence in the case $(10)$ of Theorem~$\ref{c_1=3c_2<8}$.
Then $\mathcal{E}$ is an extension of the Tango bundle 
by a trivial bundle $\mathcal{O}^{\oplus r-3}$.

The reason is as follows. Since $\mathcal{E}$ is globally generated,
$\mathcal{E}$ has $\mathcal{O}^{\oplus r-4}$ as a subbundle; 
denote by $\mathcal{E}_0$ the quotient bundle $\mathcal{E}/\mathcal{O}^{\oplus r-4}$.
Since $\mathcal{E}_0$ is globally generated of rank four with 
$c_4(\mathcal{E}_0)=0$,
$\mathcal{E}_0$ has also $\mathcal{O}$ as a subbundle; 
denote by $\mathcal{E}_1$ the quotient bundle $\mathcal{E}_0/\mathcal{O}$.
We show that $\mathcal{E}_1$ is the Tango bundle.
First note that the dual $\mathcal{E}_1^{\vee}$ of  $\mathcal{E}_1$ fits in an exact sequence
\[0\to\mathcal{E}_1^{\vee} \to \mathcal{O}^{\oplus 7}\to\Omega_{\mathbb{P}^4}(2) \to 0.\]
Note also that $h^0(\mathcal{E}_1^{\vee})=0$;
indeed, if $h^0(\mathcal{E}_1^{\vee})\neq 0$, then 
$\mathcal{E}_1$ would admit $\mathcal{O}$ as a direct summand,
which contradicts the fact that $c_3(\mathcal{E}_1)=5\neq 0$
and the rank of $\mathcal{E}_1$ is three. 
Since $h^0(\Omega_{\mathbb{P}^4}(2))=10$ by the Bott formula~\cite[p.\ 8]{oss},
this implies that $h^1(\mathcal{E}_1^{\vee})=3$.
Now we have an isomorphism 
$\Ext^1(\Ext^1(\mathcal{O},\mathcal{E}_1^{\vee})\otimes \mathcal{O},\mathcal{E}_1^{\vee})
\cong 
\End(\Ext^1(\mathcal{O},\mathcal{E}_1^{\vee}))$;
let $\xi$ be the element in $\Ext^1(\Ext^1(\mathcal{O},\mathcal{E}_1^{\vee})\otimes \mathcal{O},\mathcal{E}_1^{\vee})$
corresponding to the identity
in $\End(\Ext^1(\mathcal{O},\mathcal{E}_1^{\vee}))$.
Consider the extension
\[0\to \mathcal{E}_1^{\vee}\to \mathcal{F}\to \Ext^1(\mathcal{O},\mathcal{E}_1^{\vee})\otimes \mathcal{O}\to 0\]
corresponding to $\xi$;
then $H^0(\mathcal{F})\cong H^0(\mathcal{E}_1^{\vee})=0$ and $H^1(\mathcal{F})=0$.
Let 
\[0\to \mathcal{O}^{\oplus 7}\to \mathcal{O}^{\oplus 10}\to \Ext^1(\mathcal{O},\mathcal{E}_1^{\vee})\otimes \mathcal{O}\to 0\]
be the extension corresponding to the image 
of $\xi$ via 
the map
\[\Ext^1(\Ext^1(\mathcal{O},\mathcal{E}_1^{\vee})\otimes \mathcal{O},\mathcal{E}_1^{\vee})
\to 
\Ext^1(\Ext^1(\mathcal{O},\mathcal{E}_1^{\vee})\otimes \mathcal{O},\mathcal{O}^{\oplus 7}).
\]
Then $\mathcal{F}$ fits in an exact sequence
\[
0\to \mathcal{F}\to \mathcal{O}^{\oplus 10}\to \Omega_{\mathbb{P}^4}(2)\to 0.
\]
Since $h^0(\mathcal{F})=h^1(\mathcal{F})=0$,
the induced map $H^0(\mathcal{O}^{\oplus 10})\to H^0(\Omega_{\mathbb{P}^4}(2))$ is an isomorphism.
Therefore $\mathcal{F}^{\vee}\cong \Omega_{\mathbb{P}^4}^2(3)$,
and thus $\mathcal{E}_1$ is the Tango bundle.

According to \cite[\S 4]{MR1987746},
Trautmann~\cite{MR0352523} and Vetter~\cite{MR0344518}
give 
an
explicit construction 
of 
the bundle which is,
up to taking duals and twists by $\mathcal{O}(1)$,
the Tango bundle.
\end{rmk}

In the following, we give an example in case (16) of Theorem~\ref{c_1=3c_2<8}.

\begin{ex}\label{Example of the case (16)}
Let $\psi:\mathcal{O}(-2)^{\oplus 3}\to \mathcal{O}(-1)^{\oplus 3}$ be the morphism defined by a matrix
\[
\begin{bmatrix}
0&y&x-\lambda z\\
x&0&y\\
z&z-x&0
\end{bmatrix},
\]
where $(x:y:z)$ are homogeneous coordinates of $\mathbb{P}^2$ and $\lambda\in K\setminus \{0,1\}$.
Then $\Coker(\psi)$ is supported on an elliptic curve $E:y^2z=x(x-z)(x-\lambda z)$,
and its Chern polynomial $c_t(\Coker(\psi))=1+3t+9t^2$.
Moreover $h^0(\Coker(\psi))=0$.
Take a sufficiently large integer $r$ (e.g., $r\geq 6$)
and a general morphism $\psi':\mathcal{O}(-2)^{\oplus 3}\to 
\mathcal{O}^{\oplus r}$,
and consider a subbundle morphism $\Psi={}^t[\psi',\psi]:\mathcal{O}(-2)^{\oplus 3}\to 
\mathcal{O}^{\oplus r}\oplus \mathcal{O}(-1)^{\oplus 3}$.
Let $\mathcal{E}$ be the cokernel of $\Psi$.
Then $\mathcal{E}$ fits in an exact sequence
\[0\to \mathcal{O}^{\oplus r}\xrightarrow{\varphi} \mathcal{E}\to \Coker(\psi)\to 0,\]
where 
$c_1(\mathcal{E})=3$ and $c_2(\mathcal{E})=9$.
The degeneracy locus $Z$ of the composite of a general inclusion $\mathcal{O}^{\oplus r-1}\hookrightarrow \mathcal{O}^{\oplus r}$
and $\varphi$ has codimension two, and we see that the cokernel of the composite is isomorphic to $\mathcal{I}_Z(3)$,
where $Z$ is a $0$-dimensional closed subscheme of length $9$
in $\mathbb{P}^2$. Moreover
we have the following exact sequence
\[0\to \mathcal{O}_{\mathbb{P}^2}\to \mathcal{I}_Z(3)\to \Coker(\psi)\to 0.\]
Therefore $Z$ lies on the elliptic curve $E$, and $\Coker(\psi)$ is isomorphic to $\mathcal{O}_E(\mathfrak{d})$,
where $\mathfrak{d}$ is a divisor of degree zero on $E$;
thus $\mathcal{E}$ fits in the following exact sequence
\[0\to \mathcal{O}^{\oplus r}\to \mathcal{E}\to \mathcal{O}_E(\mathfrak{d})\to 0.\]
This implies that $\mathcal{E}$ is nef.
Finally note that $\mathfrak{d}\neq 0$ since $h^0(\Coker(\psi))=0$.
\end{ex}

%%%%%%%%%%%%%%%%%%%%%%%%%%%%%%%%%%%%%%%%%%%%%%%%%%%%%%%%%%%%%
%%%Question 
%%%%%%%%%%%%%%%%%%%%%%%%%%%%%%%%%%%%%%%%%%%%%%%%%%%%%%%%%%%%%%%%%%%%
We end this section with the following question about some properties of 
a nef vector bundle $\mathcal{E}$ in 
case (16) of Theorem~\ref{c_1=3c_2<8}.
\begin{question}
Is the support of the evaluation map $H^0(\mathcal{E})\otimes \mathcal{O}\to \mathcal{E}$
in case (16) of Theorem~\ref{c_1=3c_2<8}
necessarily 
reduced, 
irreducible and nonsingular~?
\end{question}

\section{Nef but non-globally generated vector bundles}\label{nefbutNonGG}
\begin{lemma}\label{doubledualoftorsionfreequotientofnefvbonSmoothsurfaceisanefvb}
Let $\mathcal{F}$ be a nef vector bundle on a smooth projective surface $X$.
Let $\mathcal{E}_0$ be a torsion-free quotient of $\mathcal{F}$,
i.e., there exists a surjection $\mathcal{F}\to \mathcal{E}_0$ with $\mathcal{E}_0$ a torsion-free
coherent sheaf.
Let $\mathcal{E}$ denote the double dual $\mathcal{E}_0^{\vee\vee}$ of $\mathcal{E}_0$.
Then $\mathcal{E}$ is a nef vector bundle.
\end{lemma}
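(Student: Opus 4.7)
The plan is to establish two things separately: that $\mathcal{E}$ is a vector bundle, and that it is nef. The first is standard: $\mathcal{E} = \mathcal{E}_0^{\vee\vee}$ is reflexive, and on a smooth surface a reflexive sheaf has non-locally-free locus of codimension at least three, which must be empty. So $\mathcal{E}$ is locally free.

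For nefness I would use the curve criterion: $\mathcal{E}$ is nef iff for every morphism $\phi:C\to X$ from a smooth projective curve $C$, every line bundle quotient of $\phi^*\mathcal{E}$ has non-negative degree. The case of constant $\phi$ is immediate, since then $\phi^*\mathcal{E}$ is a trivial bundle. So fix a non-constant $\phi$. I would first record that the canonical injection $\mathcal{E}_0\hookrightarrow\mathcal{E}$ has cokernel $\mathcal{T}$ of $0$-dimensional support, because both sheaves agree off a codimension $\geq 2$ subset of $X$. Applying $\phi^*$ to $0\to\mathcal{E}_0\to\mathcal{E}\to\mathcal{T}\to0$ and using right-exactness gives $\phi^*\mathcal{E}_0\to\phi^*\mathcal{E}\to\phi^*\mathcal{T}\to0$. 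Let $\mathcal{I}$ denote the image of the first map. Since $\phi$ is non-constant, the preimage $\phi^{-1}(\operatorname{supp}\mathcal{T})$ is a finite subset of $C$, so $\phi^*\mathcal{T}$ is a torsion sheaf on $C$, and one has a short exact sequence $0\to\mathcal{I}\to\phi^*\mathcal{E}\to\phi^*\mathcal{T}\to0$.

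Now let $L$ be a line bundle quotient of $\phi^*\mathcal{E}$. I would argue that the composite $\mathcal{I}\to\phi^*\mathcal{E}\to L$ must be non-zero: if it vanished, $L$ would factor as a non-zero quotient of the torsion sheaf $\phi^*\mathcal{T}$, impossible since $L$ is locally free of rank one. Its non-zero image $L'$ is then a subsheaf of $L$ on a smooth curve, necessarily of the form $L(-D)$ for an effective divisor $D$ on $C$. But $L'$ is also a quotient of $\mathcal{I}$, and hence of $\phi^*\mathcal{F}$, which is nef on $C$. Consequently $\deg L'\geq0$, which gives $\deg L\geq\deg D\geq0$, as required.

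The main potential obstacle is checking that $\phi^*\mathcal{T}$ is really torsion, but this is handled cleanly by separating the constant and non-constant cases. Everything else is formal: local freeness of reflexive sheaves on smooth surfaces, the curve criterion for nefness, and the elementary fact that a line bundle quotient of a nef bundle on a smooth projective curve has non-negative degree.
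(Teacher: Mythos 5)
Your proposal is correct and follows essentially the same route as the paper: both establish local freeness from reflexivity on a surface, then verify nefness on curves by composing with the generically injective map $\mathcal{E}_0\to\mathcal{E}$ and observing that the resulting rank-one image is a line-bundle quotient of the pullback of the nef bundle $\mathcal{F}$, hence of non-negative degree, and is a subsheaf of the given quotient $L$. Your explicit use of the torsion cokernel $\mathcal{T}$ to show the composite is non-zero is just a slightly more detailed phrasing of the paper's appeal to generic injectivity.
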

\begin{proof}
Since $\mathcal{E}$ is a reflexive sheaf on a smooth surface, 
$\mathcal{E}$ is a vector bundle.
To show that $\mathcal{E}$ is nef,
it is enough to show that, for any finite morphism $C\to X$
from a smooth curve $C$,
every quotient line bundle $\mathcal{L}$ of $\mathcal{E}|_C$
has non-negative degree.
Note here that the natural injection $\mathcal{E}_0\to \mathcal{E}$
induces a generically injective morphism $\mathcal{E}_0|_C\to \mathcal{E}|_C$.
Now let $\mathcal{M}$ be the image of the composite
of the morphism $\mathcal{E}_0|_C\to \mathcal{E}|_C$
and the surjection $\mathcal{E}|_C\to \mathcal{L}$.
Since the composite of $\mathcal{F}|_C\to \mathcal{E}_0|_C$ and $\mathcal{E}_0|_C\to \mathcal{M}$
is surjective
and $\mathcal{F}$ is nef, 
we see that $\mathcal{M}$ has non-negative degree.
Hence $\mathcal{L}$ has non-negative degree since
there is an injection $\mathcal{M}\to \mathcal{L}$ of line bundles on the smooth curve $C$.
Therefore $\mathcal{E}$ is nef.
\end{proof}

As is indicated by the statement in Lemma~\ref{key} (2) (a),
we can construct a nef but non-globally generated vector bundle on $\mathbb{P}^2$
if $c_1=3$ and $c_2=8$.
See also Example~\ref{ExampleOfNonGGWithc_2=9} in \S\ref{PropertyOfNefNonGG}
(besides Example~\ref{Example of the case (16)} in the previous section)
for an example of 
a nef but non-globally generated vector bundle on $\mathbb{P}^2$
with $c_1=3$ and $c_2=9$.

%\begin{theopargself}
%\begin{proof}[of Proposition~\ref{exampleofnefbutnonGGvb}]
%%%Use the following form if you do not use springer theorem package
\begin{proof}[Proof of Proposition~\ref{exampleofnefbutnonGGvb}]
Given an integer $r\geq 2$ and a closed point $w$ in $\mathbb{P}^2$, 
note first that
there exists a section $s$ in $H^0(\mathcal{O}(3)^{\oplus r+1})$
such that the zero locus $(s)_0$ of $s$ is $\{w\}$ as closed subschemes.
Let $\varphi:\mathcal{O}(-3)\to \mathcal{O}^{\oplus r+1}$ be the morphism
determined by $s$,
and 
$\mathcal{E}_0$ 
the cokernel of $\varphi$.
The dual $\varphi^{\vee}:\mathcal{O}^{\oplus r+1}\to \mathcal{O}(3)$
of $\varphi$ has $\mathcal{I}_w(3)$
as its image,
and we obtain an exact sequence
\[
0\to \mathcal{E}_0^{\vee}\to \mathcal{O}^{\oplus r+1}\to \mathcal{I}_w(3)\to 0.
\]
On the other hand, the ideal sheaf $\mathcal{I}_w$ sits in an exact sequence
\begin{equation}\label{resolutionOfmaximalidealofp}
0\to \mathcal{O}(-2)\to \mathcal{O}(-1)^{\oplus 2}\to \mathcal{I}_w\to 0.
\end{equation}
Therefore
$\Tor_i^{\mathcal{O}_w}(\mathcal{I}_w(3),k(w))=0$ for $i>1$.
Hence 
$\mathcal{E}_0^{\vee}$ is a vector bundle.
The exact sequence (\ref{resolutionOfmaximalidealofp})
also implies that
$\calext^1(\mathcal{I}_w(3),\mathcal{O})\cong k(w)$.
Let $\mathcal{E}$ be the double dual $\mathcal{E}_0^{\vee\vee}$ of $\mathcal{E}_0$.
Since $\calhom(\mathcal{I}_w(3),\mathcal{O})\cong \mathcal{O}(-3)$,
the vector bundle $\mathcal{E}$ fits in 
the desired 
exact sequence
\begin{equation}\label{PropRequiredExSeq}
0\to \mathcal{O}(-3)\xrightarrow{\varphi} \mathcal{O}^{\oplus r+1}\to \mathcal{E}
\to 
k(w)
\to 0.
\end{equation}

Suppose next that a vector bundle $\mathcal{E}$ fits in  the exact sequence (\ref{PropRequiredExSeq}).
We split the sequence (\ref{PropRequiredExSeq}) into the following two exact sequences:
\begin{gather*}
0\to \mathcal{O}(-3)\to \mathcal{O}^{\oplus r+1}\to \mathcal{E}_0\to 0;\\
0\to \mathcal{E}_0\to \mathcal{E}\to 
k(w)
\to 0.
\end{gather*}
We see that $\mathcal{E}_0$ is a torsion-free sheaf of rank $r$
with $c_1(\mathcal{E}_0)=3$, $c_2(\mathcal{E}_0)=9$, 
and $h^1(\mathcal{E}_0)=1$.
We have
$\mathcal{E}\cong \mathcal{E}_0^{\vee\vee}$,
and thus $\mathcal{E}$ is a nef vector bundle by 
Lemma~\ref{doubledualoftorsionfreequotientofnefvbonSmoothsurfaceisanefvb}.
Moreover $c_1=3$ and $c_2=8$. 
Since $c_2<9$, we obtain $h^1(\mathcal{E})=0$ by (\ref{KVvanishing}).
Hence $H^0(\mathcal{E}_0)\cong H^0(\mathcal{E})$. Therefore $\mathcal{E}$ is not globally generated.
\end{proof}
%\end{theopargself}

\begin{rmk}
If $r=2$, the exact sequence in Proposition~\ref{exampleofnefbutnonGGvb} already appears in \cite[3.2.5]{MR1633159}.
Professor Adrian Langer kindly informed the author of this fact and that he ruled out this case by mistake.
\end{rmk}

\section{
Some examples
}\label{PropertyOfNefNonGG}
Let $X$ be a smooth projective variety, and $\mathcal{E}$ a vector bundle on $X$ of rank $r$.
It is well known (see, e.g., \cite[Statement(folklore) 4.1]{MR1172165})
that if $\mathcal{E}$ is globally generated
then $r-1$ general global sections of $\mathcal{E}$ define an injection
$\mathcal{O}_X^{\oplus r-1}\to \mathcal{E}$ and this injection extends to an exact sequence
\[0\to \mathcal{O}_X^{\oplus r-1}\to \mathcal{E}\to \mathcal{I}_Z\otimes \det\mathcal{E}\to 0
\]
where $Z$ is 
a locally complete intersection subscheme 
of codimension two in $X$, if not empty.
For nef vector bundles, however, analogous results do not hold in general,
even if $h^0(\mathcal{E})\geq r-1$, as the following examples show.

\begin{ex}
Let $\mathcal{E}_0$ be a nef vector bundle of rank $r-2$ on $\mathbb{P}^2$ fitting in the following exact sequence
\[0\to \mathcal{O}(-4)\to \mathcal{O}^{\oplus r-1}\to \mathcal{E}_0\to 0.\]
Then $r\geq 4$ and $h^1(\mathcal{E}_0)=3$.
Let $\xi_1$ and $\xi_2$ be linearly independent elements in $H^1(\mathcal{E}_0)$,
and let 
\[0\to \mathcal{E}_0\to \mathcal{E}\to \mathcal{O}^{\oplus 2}\to 0\]
be the exact sequence whose extension class in 
$\Ext^1(\mathcal{O}^{\oplus 2},\mathcal{E}_0)$ is determined by $\xi_1$ and $\xi_2$.
Then the connecting homomorphism $H^0(\mathcal{O}^{\oplus 2})\to H^1(\mathcal{E}_0)$ is injective,
and thus $h^0(\mathcal{E})=h^0(\mathcal{E}_0)=r-1$.
Moreover $\mathcal{E}$ is a nef vector bundle of rank $r$ by \cite[Theorem 6.2.12 (ii)]{MR2095472}.
In this example,
every morphism $\mathcal{O}^{\oplus r-1}\to \mathcal{E}$ is not injective,
since it factors through the bundle $\mathcal{E}_0$ of rank $r-2$.
\end{ex}

\begin{ex}\label{ExampleOfNonGGWithc_2=9}
Let $\mathcal{E}_0$ be a nef vector bundle of rank $r-1$ on $\mathbb{P}^2$ fitting in the following exact sequence
\[0\to \mathcal{O}(-3)\to \mathcal{O}^{\oplus r}\to \mathcal{E}_0\to 0.\]
Then $r\geq 3$ and $h^1(\mathcal{E}_0)=1$.
Let 
\[0\to \mathcal{E}_0\to \mathcal{E}\to \mathcal{O}\to 0\]
be a non-split exact sequence;
the connecting homomorphism $H^0(\mathcal{O})\to H^1(\mathcal{E}_0)$ is an isomorphism.
Then $h^0(\mathcal{E})=h^0(\mathcal{E}_0)=r$,
and it follows from \cite[Theorem 6.2.12 (ii)]{MR2095472} that 
$\mathcal{E}$ is a nef but non-globally generated vector bundle of rank $r$ with $c_1=3$ and $c_2=9$.
In this example, a general morphism $\mathcal{O}^{\oplus r-1}\to \mathcal{E}$ is injective,
but its cokernel $\mathcal{C}$ fits in a non-split exact sequence
\[
0\to \mathcal{O}_C\to \mathcal{C}\to \mathcal{O}\to 0
\]
where $\mathcal{O}_C$ is the structure sheaf of some curve $C$ of degree $3$ in $\mathbb{P}^2$;
since $\mathcal{C}$
has a non-zero torsion subsheaf $\mathcal{O}_C$, $\mathcal{C}$ is not isomorphic to a torsion-free coherent sheaf 
$\mathcal{I}_Z\otimes \det\mathcal{E}$ for any closed subscheme $Z$ of $\mathbb{P}^2$.
\end{ex}

%\renewcommand{\thefootnote}{\fnsymbol{footnote}}
%\footnote[0]{2010 {\it Mathematics Subject Classification}\/: Primary 
%14F05;
%Secondary 
%14J60}
%\footnote[0]{{\it Key words and Phrases}\/: nef vector bundles,
%spectral sequences, Fano bundles}
%%\footnote[0]{
%%}

\bibliographystyle{acm}
%\bibliographystyle{plain}
%\bibliographystyle{alpha}
%\bibliography{myrefsver25}
\bibliography{Rev7EprintNewNefOnProjSpace.bbl}

\newcommand{\noop}[1]{} \newcommand{\noopsort}[1]{}
  \newcommand{\printfirst}[2]{#1} \newcommand{\singleletter}[1]{#1}
  \newcommand{\switchargs}[2]{#2#1}
\begin{thebibliography}{10}

\bibitem{MR3119690}
{\sc Anghel, C., and Manolache, N.}
\newblock Globally generated vector bundles on $\mathbb{P}^n$ with $c_1=3$.
\newblock {\em Math. Nachr. 286}, 14--15 (2013), 1407--1423.

\bibitem{MR1172165}
{\sc B{\u{a}}nic{\u{a}}, C.}
\newblock Smooth reflexive sheaves.
\newblock {\em Rev.\ Roumaine Math.\ Pures Appl. 36}, 9--10 (1991), 571--593.

\bibitem{MR0509388}
{\sc Beilinson, A.~A.}
\newblock Coherent sheaves on ${P}^n$ and problems in linear algebra.
\newblock {\em Funktsional.\ Anal.\ i Prilozhen. 12}, 3 (1978), 214--216.

\bibitem{MR992977}
{\sc Bondal, A.~I.}
\newblock Representations of associative algebras and coherent sheaves.
\newblock {\em Izv.\ Akad.\ Nauk SSSR Ser.\ Mat. 53}, 1 (1989), 25--44.

\bibitem{MR3019571}
{\sc Chiodera, L., and Ellia, P.}
\newblock Rank two globally generated vector bundles with $c_1\leq 5$.
\newblock {\em Rend. Istit. Mat. Univ. Trieste 44\/} (2012), 413--422.

\bibitem{fl}
{\sc Fulton, W.}
\newblock {\em Intersection theory}, second~ed., vol.~2 of {\em Ergebnisse der
  Mathematik und ihrer Grenzgebiete (3)}.
\newblock Springer-Verlag, Berlin, 1998.

\bibitem{MR0675204}
{\sc Kawamata, Y.}
\newblock A generalization of {K}odaira-{R}amanujam's vanishing theorem.
\newblock {\em Math.\ Ann. 261}, 1 (1982), 43--46.

\bibitem{MR1987746}
{\sc Kumar, N.~M., Peterson, C., and Rao, A.~P.}
\newblock Standard vector bundle deformations on $\mathbb{P}^n$.
\newblock In {\em Vector {B}undles and {R}epresentation {T}heory (Columbia, MO,
  2002)\/} (Providence, RI, 2003), S.~D. Cutkosky, D.~Eddidin, Z.~Qin, and
  Q.~Zhang, Eds., no.~322 in Contemp. Math., Amer. Math. Soc., pp.~151--163.

\bibitem{MR1633159}
{\sc Langer, A.}
\newblock Fano 4-folds with scroll structure.
\newblock {\em Nagoya Math. J. 150\/} (1998), 135--176.

\bibitem{MR2095472}
{\sc Lazarsfeld, R.}
\newblock {\em Positivity in algebraic geometry. {II}. Positivity for vector
  bundles, and multiplier ideals.}, vol.~49 of {\em Ergebnisse der Mathematik
  und ihrer Grenzgebiete. 3. Folge. A Series of Modern Surveys in Mathematics}.
\newblock Springer-Verlag, Berlin, 2004.

\bibitem{resolution}
{\sc Ohno, M.}
\newblock Nef vector bundles on a projective space or a hyperquadric with the
  first {C}hern class small.
\newblock {\em arXiv:1409.4191\/} (2014).

\bibitem{Nefofc1=3c2=8OnPN}
{\sc Ohno, M.}
\newblock Nef vector bundles on a projective space with first {C}hern class 3
  and second {C}hern class 8.
\newblock {\em Le Matematiche 72}, 2 (2017), 69--81.

\bibitem{MR3275418}
{\sc Ohno, M., and Terakawa, H.}
\newblock A spectral sequence and nef vector bundles of the first {C}hern class
  two on hyperquadrics.
\newblock {\em Ann.\ Univ.\ Ferrara Sez. VII Sci.\ Mat. 60}, 2 (2014),
  397--406.

\bibitem{oss}
{\sc Okonek, C., Schneider, M., and Spindler, H.}
\newblock {\em Vector bundles on complex projective spaces}, vol.~3 of {\em
  Progress in Mathematics}.
\newblock Birkh{\"{a}}user, Boston, Mass., 1980.

\bibitem{pswnef}
{\sc Peternell, T., Szurek, M., and Wi{\'{s}}niewski, J.~A.}
\newblock Numerically effective vector bundles with small {C}hern classes.
\newblock In {\em Complex Algebraic varieties, Proceedinigs, Bayreuth, 1990\/}
  (Berlin, 1992), T.~S.~M. Hulek, K.~Peternell and F.-O. Schreyer, Eds.,
  no.~1507 in Lecture Notes in Math., Springer, pp.~145--156.

\bibitem{sato}
{\sc Sato, E.}
\newblock Uniform vector bundles on a projective space.
\newblock {\em J. Math. Soc. Japan 28}, 1 (1976), 123--132.

\bibitem{MR3120618}
{\sc Sierra, J.~C., and Ugaglia, L.}
\newblock Globally generated vector bundles on projective spaces {II}.
\newblock {\em J. Pure Appl.\ Algebra 218}, 1 (2014), 174--180.

\bibitem{swNagoya}
{\sc Szurek, M., and Wi\'{s}niewski, J.~A.}
\newblock On {F}ano manifolds, which are $\mathbb{P}^k$-bundles over
  $\mathbb{P}^2$.
\newblock {\em Nagoya Math. J. 120\/} (1990), 89--101.

\bibitem{MR0401766}
{\sc Tango, H.}
\newblock An example of indecomposable vector bundle of rank $n-1$ on
  $\mathbf{P}^n$.
\newblock {\em J. Math.\ Kyoto Univ. 16}, 1 (1976), 137--141.

\bibitem{MR0352523}
{\sc Trautmann, G.}
\newblock Darstellung von {V}ektorraumb{\"u}ndeln {\"u}ber $\mathbf{C}\setminus
  \{0\}$.
\newblock {\em Arch.\ Math. (Basel) 24\/} (1973), 303--313.

\bibitem{MR0344518}
{\sc Vetter, U.}
\newblock Zu einem {S}atz von {G}. {T}rautmann {\"u}ber den {R}ang gewisser
  koh{\"a}renter analytischer {M}oduln.
\newblock {\em Arch.\ Math. (Basel) 24\/} (1973), 158--161.

\bibitem{MR0667459}
{\sc Viehweg, E.}
\newblock Vanishing theorems.
\newblock {\em J. Reine Angew.\ Math. 335\/} (1982), 1--8.

\end{thebibliography}
\end{document}